\title[An infinite surface with the lattice property II]{An infinite surface with the lattice property II:\\ Dynamics of pseudo-Anosovs}
\author[]{W. Patrick Hooper}
\address{The City College of New York, New York, NY, USA 10031}
\address{CUNY Graduate Center, New York, NY, USA 10016}
\email{whooper@ccny.cuny.edu}
\newtheorem{theorem}{Theorem}
\newtheorem{proposition}[theorem]{Proposition}
\newtheorem{lemma}[theorem]{Lemma}
\newtheorem{corollary}[theorem]{Corollary}
\theoremstyle{definition}
\newlength{\savearraycolsep}
	{\setlength{\savearraycolsep}{\arraycolsep}%
	\setlength{\arraycolsep}{#1}%
	\begin{array}{#2}}%
	{\end{array}\setlength{\arraycolsep}{\savearraycolsep}}
\def\C{\mathbb{C}}%
\def\N{\mathbb{N}}%
\def\R{\mathbb{R}}%
\def\Z{\mathbb{Z}}%
\def\H{\mathbb{H}} 
\def\GL{\textit{GL}}
\def\SL{\textit{SL}}
\def\PGL{\textit{PGL}}
\def\0{{\mathbf{0}}}
\def\1{{\mathbf{1}}}
\def\ba{{\mathbf{a}}}
\def\bb{{\mathbf{b}}}
\def\bc{{\mathbf{c}}}
\def\bd{{\mathbf{d}}}
\def\bu{{\mathbf{u}}}
\def\u{{\mathbf{u}}}
\def\bv{{\mathbf{v}}} %
\def\v{{\mathbf{v}}} %
\def\bw{{\mathbf{w}}}
\def\bP{{\mathbf{P}}}
\def\bS{{\mathbf{S}}}
\def\sA{{\mathcal{A}}}
\def\sB{{\mathcal{B}}}
\def\sC{{\mathcal{C}}}
\def\sG{{\mathcal{G}}}
\def\sH{{\mathcal{H}}}
\def\sL{{\mathcal{L}}}
\def\sP{{\mathcal{P}}}
\def\sS{{\mathcal{S}}}
\def\sT{{\mathcal{T}}}
\def\sW{{\mathcal{W}}}
\def\del{{\partial}} 
\def\ker{\textit{ker}}%
\def\hol{\mathit{hol}} 
\def\imod#1{\allowbreak\mkern10mu({\operator@font mod}\,\,#1)}
\def\dev{{\mathit{dev}}} 
\def\Aff{\mathit{Aff}} 
\def\and{{\quad \textrm{and} \quad}}
\newif\ifdraft\drafttrue
\def\Area{\textrm{Area}}
\newcommand{\bX}[0]{{\mathbf{X}}}
\newcommand{\Hom}[0]{{\mathrm{Hom}}}
\newcommand{\bbP}[0]{\mathbb{P}}
\def\KH{{\mathbb K\mathbb H}}
\begin{document}
\clearpage
\begin{abstract}
We study the behavior of hyperbolic affine automorphisms of a translation surface which is infinite in area and genus that is obtained
as a limit of surfaces built from regular polygons studied by Veech. We find that hyperbolic affine automorphisms are not recurrent and yet their action
restricted to cylinders satisfies a mixing-type formula with polynomial decay. Then we consider the extent to which the
action of these hyperbolic affine automorphisms satisfy Thurston's definition of a pseudo-Anosov homeomorphism.
In particular we study the action of these automorphisms on simple closed curves and on homology classes. 
These objects are exponentially attracted by the expanding and contracting foliations but exhibit polynomial decay. 
We are able to work out exact asymptotics of these limiting quantities because of special integral formula for
algebraic intersection number which is attuned to the geometry of the surface and its deformations.
\end{abstract}
\maketitle
\thispagestyle{empty}

\section*{Introduction}

Translation surfaces built from two copies of a regular polygon as depicted below were studied by Veech and proven to have beautiful properties \cite{V}. Perhaps most surprisingly, these surfaces
admit affine symmetries distinct from the obvious Euclidean symmetries. An understanding of these symmetries allowed Veech to prove his famed dichotomy theorem: In all but countably many directions every trajectory equidistributes, and the countably many exceptional directions are completely periodic. Veech also used this symmetry group to answer natural counting problems on these surfaces.

\begin{figure}
\begin{center}
\includegraphics[height=2.5in]{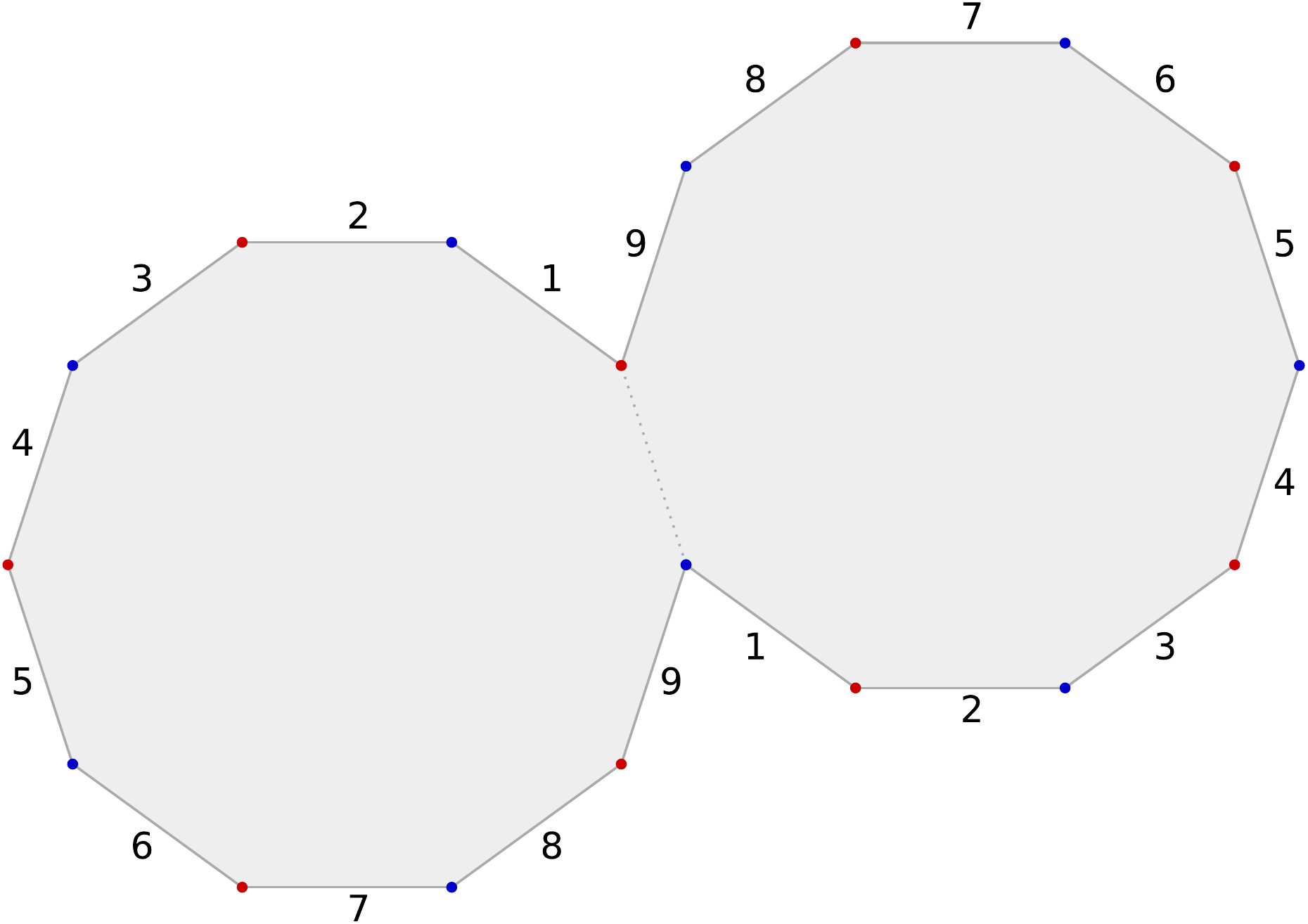}
\end{center}
\caption{Veech's double $10$-gon surface. Edge labels indicate glued edges.}
\end{figure}

Let $\bX_{n}$ denote the double regular $n$-gon surface. 
In \cite{Higl1}, we showed that by choosing affine maps $A_n$ of the plane which send three consecutive vertices of the regular $n$-gon to the points $(-1,1)$, $(0,0)$ and $(1,1)$, the sequence of surfaces $\bP_{\cos \frac{\pi}{n}} = A_n(\bX_n)$ converges to the infinite area surface $\bP_1$ built from two polygonal parabolas, the convex hulls of the sets $\{(n,n^2):~n \in \Z\}$ and $\{(n,-n^2):~n \in \Z\}$. The limiting surface $\bP_1$ is depicted in the center of Figure \ref{fig:s1}. 

\begin{figure}
\includegraphics[height=4in]{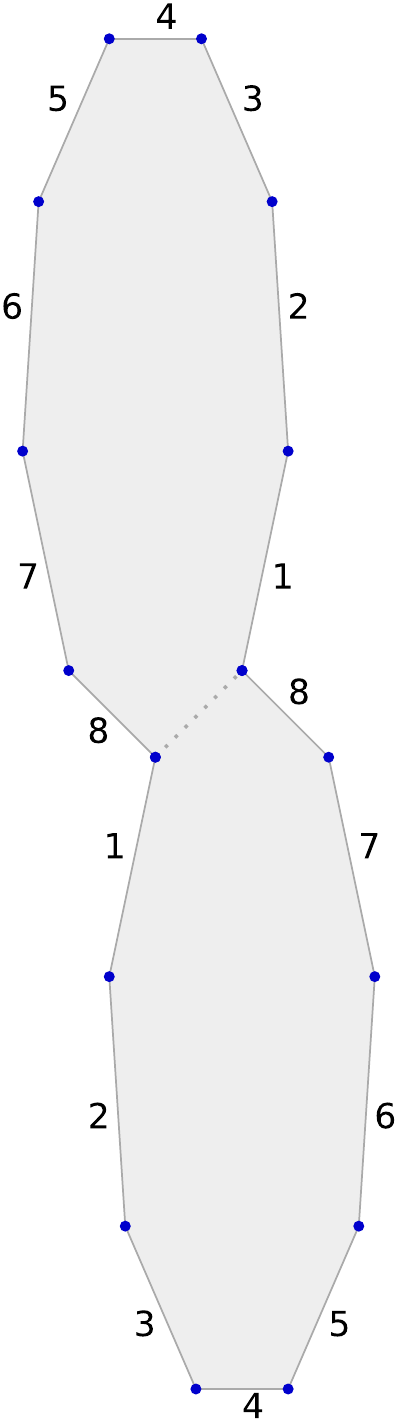}
\hfill
\includegraphics[height=4in]{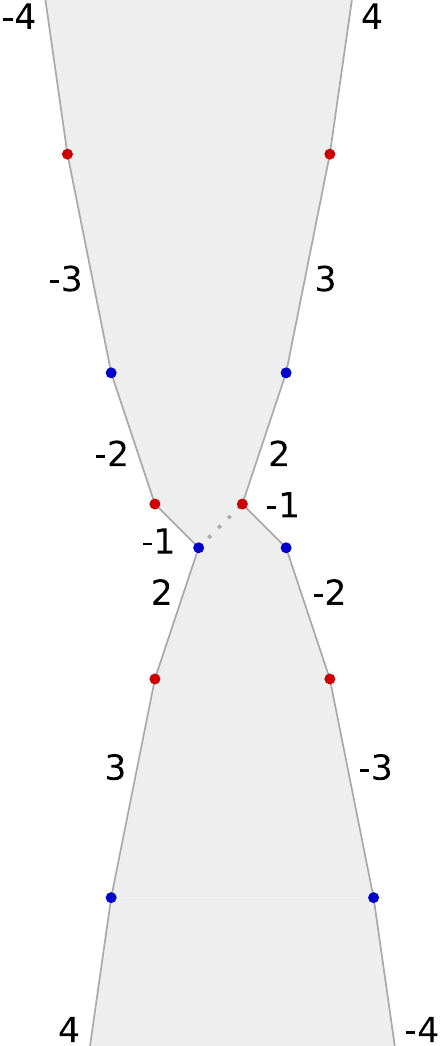}
\hfill
\includegraphics[height=4in]{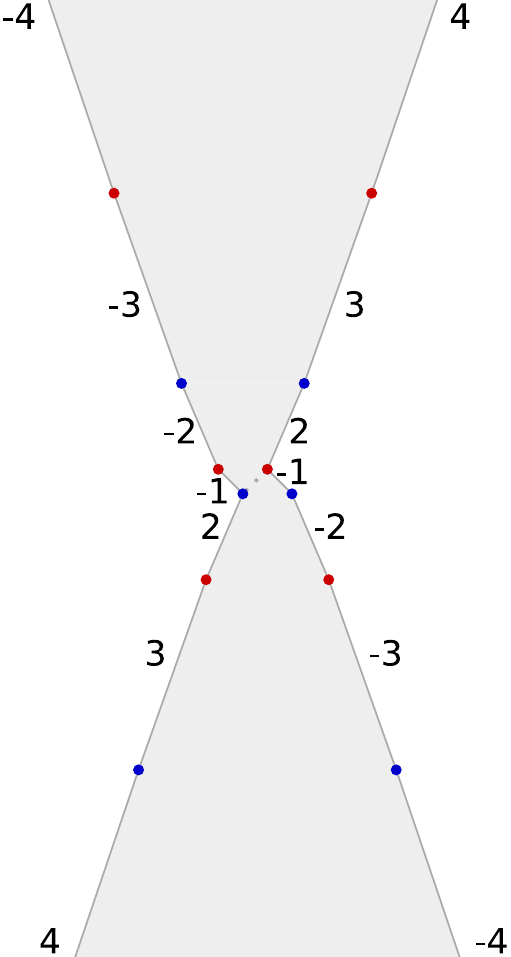}
\caption{From left to right, the surfaces $\bP_{\cos \frac{2 \pi}{9}}$, $\bP_1$ and $\bP_{\frac{5}{4}}$ are shown.}
\label{fig:s1}
\end{figure}

An {\em affine automorphism} of a translation surface $\bS$ is a homeomorphism $\phi:\bS \to \bS$ which is a real affine map in local coordinates. Using the natural identification
between tangent spaces to non-singular points of $\bS$ with the plane, we see that the derivative $D \phi$ of such a map must be constant and so we interpret $D\phi$ as an element
of $\GL(2,\R)$. The subgroup of $\GL(2,\R)$ consisting of derivatives of affine homeomorphisms of $\bS$ is called the surface's {\em Veech group}. (We allow orientation reversing elements in the Veech group, which differs from conventions in some other articles.)
For the surfaces we will consider, the Veech groups are contained in $\SL^\pm(2,\R)$,
the group of $2 \times 2$ matrices with real entries and determinant $\pm 1$.

In \cite{Higl1}, it was shown that the symmetries of $\bP_{\cos \frac{\pi}{n}}$ persist to the limiting surface $\bP_1$ and beyond: There are surfaces $\bP_c$ defined for $c \geq 1$ and all these surfaces have topologically conjugate affine automorphism group actions \cite[Theorem 7]{Higl1}. The Veech groups
of $\bP_c$ vary continuously in $c$ and lie in the image of the representation $\rho_c:\sG \to \SL^\pm(2,\R)$ where $\sG=(C_2 \ast C_2 \ast C_2) \times C_2$ with $C_2$ denoting the cyclic group of order two and
$$\rho_c(\sG)=\left \langle
\left[\begin{array}{cc}
-1 & 0 \\
0 & 1
\end{array}\right],
\left[\begin{array}{cc}
-1 & 2 \\
0 & 1
\end{array}\right],
\left[\begin{array}{cc}
-c & c-1 \\
-c-1 & c
\end{array}\right],
\left[\begin{array}{cc}
-1 & 0 \\
0 & -1
\end{array}\right]
\right \rangle$$
describes the images of the generators.

We call a matrix in $\SL^\pm(2,\R)$ {\em hyperbolic} if it has two eigenvalues with distinct absolute values
and call an affine automorphism {\em hyperbolic} if its derivative is hyperbolic. 
A hyperbolic matrix in $\SL^\pm(2,\R)$ has two real eigenvalues $\lambda^u$ and $\lambda^s$ with $|\lambda^u|>1$, $|\lambda^s|<1$ and $\lambda^u \lambda^s = \pm 1$.
The Veech group $\rho_1(\sG)$ of $\bP_1$ has numerous hyperbolic elements. 

A significant goal of this paper is to address the extent to which hyperbolic affine automorphisms of infinite surfaces satisfy the defining properties of pseudo-Anosov homeomorphisms of closed surfaces. The example we study is very special which enables us 
to say more about these questions than we'd expect to be able to for a general surface, so in particular we expect this paper
sets some limits on what we could hope to be true in general.

For closed surfaces, the dynamics of hyperbolic affine automorphisms are well known to be mixing since they admit Markov partitions \cite[\S 10.5]{FLP}. On the surface $\bP_1$ we observe that cylinders satisfy a mixing-type formula but with polynomial decay. A {\em cylinder} on a translation surface is a subset isometric to $\R/c \Z \times [0,h]$ for some circumference $c>0$ and height $h>0$.
We say two sequences $a_n$ and $b_n$ are asymptotic and write $a_n \sim b_n$ if $\lim_{n \to \infty} \frac{a_n}{b_n}=1$.

\begin{theorem}
\label{thm:mixing}
Let ${\mathcal A}$ and ${\mathcal B}$ be cylinders in $\bP_1$ and $\phi:\bP_1 \to \bP_1$ be a hyperbolic affine automorphism
with derivative $D \phi=\rho_1(g)$. Then 
$$\Area \big(\phi^n(\sA) \cap \sB\big) \sim
{\textstyle \frac{1}{4 \sqrt{2 \pi}} \big(\frac{1}{\beta n}\big)^{\frac{3}{2}}} \Area(\sA) \Area(\sB) 
\quad
\text{where} \quad
\beta=\frac{1}{\lambda_1^u}\, [{\textstyle \frac{d}{dc}} \lambda_c^u]_{c=1}$$
is a positive constant which can be computed using the formula above where we use
$\lambda_c^u$ to denotes the eigenvalue of $\rho_c(g)$ with greatest absolute value.
\end{theorem}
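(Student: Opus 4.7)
The plan is to represent $\Area(\phi^n(\sA)\cap\sB)$ as a Laplace-type integral in the deformation parameter $c$ of the family $\{\bP_c\}_{c\ge 1}$, and then to extract its asymptotics by a saddle-point analysis at $c=1$. The logarithmic derivative $\beta$ enters as the linearization of $\log\lambda_c^u$ at the saddle.

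By \cite[Theorem 7]{Higl1} the affine automorphism group actions are topologically conjugate across the family $\{\bP_c\}_{c\ge 1}$, so each cylinder $\sA\subset\bP_1$ extends to a continuous family $\sA(c)\subset\bP_c$, and likewise for $\sB$. Using the ``special integral formula for algebraic intersection number'' foreshadowed in the abstract, I would aim to establish an identity of the form
\[
\Area\big(\phi^n(\sA)\cap\sB\big)=\int_1^{c_0} F(c)\,(\lambda_c^u/\lambda_1^u)^{-n}\,dc
\]
up to negligible tail terms, for some explicit kernel $F=F_{\sA,\sB}$ and some cutoff $c_0>1$. Heuristically, the family $\{\bP_c\}_{c\ge 1}$ provides a spectral parameter substituting for the absence of recurrence on $\bP_1$, exploiting the fact that as $c$ increases past $1$ a parabolic element of the Veech group $\rho_c(\sG)$ bifurcates into a hyperbolic one, which promotes the genuine mixing that is forbidden on $\bP_1$ itself.

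Next I would apply Laplace's method at $c=1$: expanding $\log(\lambda_c^u/\lambda_1^u)=\beta(c-1)+O((c-1)^2)$ and substituting $s=\beta n(c-1)$ turns the integral into $\frac{1}{\beta n}\int_0^{\infty} F\bigl(1+\frac{s}{\beta n}\bigr)e^{-s}\,ds$ to leading order. To obtain the stated rate $n^{-3/2}$ (rather than $n^{-1}$) one needs $F(c)\sim A(c-1)^{1/2}$ as $c\to 1^+$ for some constant $A$ proportional to $\Area(\sA)\Area(\sB)$; this square-root vanishing should reflect the square-root opening of the bifurcation above, i.e.\ cylinders in $\bP_c$ should degenerate like $\sqrt{c-1}$ in some transverse width as $c\to 1^+$. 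The integral then reduces to $A\cdot\Gamma(3/2)/(\beta n)^{3/2}$, and matching overall normalizations produces the coefficient $\frac{1}{4\sqrt{2\pi}}$.

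The main obstacle is not the saddle-point analysis but the construction of the integral representation and the identification of the precise $(c-1)^{1/2}$ vanishing of the kernel at the saddle. Both depend on the lattice property proved in \cite{Higl1} and on the detailed arithmetic of how cylinders in $\bP_1$ deform inside the family $\{\bP_c\}$; once these are in place, the asymptotics and the explicit constant follow by routine bookkeeping.
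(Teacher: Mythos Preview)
Your Laplace-method intuition is on the right track, but the integral representation you postulate is not the one that actually exists, and two of your guesses about its structure are off.

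First, the paper does not produce an integral for $\Area(\phi^n(\sA)\cap\sB)$ directly. It uses an elementary exact identity (Proposition~\ref{prop:intersection_formula}): for non-parallel cylinders,
\[
\Area(\phi^n(\sA)\cap\sB)=\frac{|\phi^n_\ast(\gamma_\sA)\cap\gamma_\sB|\,\Area(\sA)\Area(\sB)}{|\hol_1(\phi^n\gamma_\sA)\wedge\hol_1(\gamma_\sB)|}.
\]
The denominator is just $|\rho_1(g)^n\v\wedge\bw|\sim|\lambda_1^u|^n|\kappa|$ with $\kappa=(P_1^u\v)\wedge\bw$; all the work goes into the numerator, the \emph{algebraic intersection number} of the core curves. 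This is what the ``special integral formula'' computes (Lemma~\ref{lem:intersection parabola}).

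Second, that integral is over $t\in[0,\pi]$ with $c=\cos t$, i.e.\ over $c\in[-1,1]$, not over $c\ge 1$:
\[
\phi^n_\ast(\gamma_\sA)\cap\gamma_\sB=\frac{1}{2\pi}\int_0^\pi\big(\rho_{\cos t}(g)^n\,\widetilde\hol\,\gamma_\sA\wedge\widetilde\hol\,\gamma_\sB\big)(1-\cos t)\,dt.
\]
The formula arises as a limit from the \emph{compact} Veech $n$-gon surfaces $\bP_{\cos(2\pi/n)}$, not from the hyperbolic side $c>1$, and its validity rests on orthogonality of trigonometric polynomials. Your proposed identity over $[1,c_0]$ has no analogue here.

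Third, the source of the exponent $-\tfrac32$ is different from what you suggest. After isolating the unstable part, the integral is $\int_0^\epsilon e^{-np(t)}q(t)\,dt$ with $p(t)=-\ln|\lambda_{\cos t}^u|$ and $q(t)=(\bv_c\barwedge\bw_c)(1-\cos t)$. Here $p$ has a quadratic minimum at $t=0$ with $p''(0)=\beta$ (this is where $\beta$ enters), while $q(t)\sim\tfrac{\kappa}{2}t^2$ because of the explicit weight $1-\cos t$ in the intersection formula. Erd\'elyi's theorem then gives $n^{-3/2}$. The factor $\kappa$ is nonzero because $D\phi\in\SL^\pm(2,\Z)$ has quadratic-irrational eigendirections while $\hol_1\gamma_\sA,\hol_1\gamma_\sB\in\Z^2$; no ``square-root degeneration of cylinders'' is involved. (If you change variables to $c$ you do see a $(1-c)^{1/2}$, but it comes from the Jacobian $dt=dc/\sqrt{1-c^2}$ combined with the weight $(1-c)$, and the endpoint is approached from \emph{below}.)
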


Note that $\lambda^u_c$ is real and varies analytically in a neighborhood of $c=1$
because the entries of the matrix $\rho_c(g)$ are real polynomials in $c$ and $\rho_1(g)$ is hyperbolic (and hyperbolicity is stable under perturbation of the matrix). This ensures that the quantity $[{\textstyle \frac{d}{dc}} \lambda_c^u]_{c=1}$ is a well-defined real number.
It will follow from later work (Lemma \ref{lem:eigenvalue}) that the quantity $\beta$ is positive.

As a consequence of this theorem, we note that no hyperbolic $\phi$ is recurrent because of
the $n^{\frac{-3}{2}}$ decay rate seen above:

\begin{corollary}
\label{cor: dissipative}
If $\phi:\bP_1 \to \bP_1$ is hyperbolic then its action on $\bP_1$ is totally dissipative: 
there is a countable collection $\sW$ of Lebesgue-measurable subsets of $\bP_1$ so that
\begin{enumerate}
\item the Lebesgue measure of the complement $\bP_1 \smallsetminus \bigcup_{W \in \sW} W$ is zero, and 
\item each $W \in \sW$ is wandering in the sense that the collection $\{\phi^{-n}(W):~n \geq 0\}$ is pairwise disjoint.
\end{enumerate}
\end{corollary}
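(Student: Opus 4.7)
My plan is to combine the polynomial decay from Theorem \ref{thm:mixing} with a Borel--Cantelli argument and a last-visit construction, then patch the resulting wandering sets together over a countable cylinder cover of $\bP_1$. First, fix a cylinder $\sA \subset \bP_1$. Since $\phi$ preserves Lebesgue measure (its derivative lies in $\SL^{\pm}(2,\R)$), Theorem \ref{thm:mixing} applied with $\sB = \sA$ gives
$$
\Area(\sA \cap \phi^{-n}\sA) \;=\; \Area(\phi^n\sA \cap \sA) \;\sim\; \tfrac{1}{4\sqrt{2\pi}}(\beta n)^{-3/2}\Area(\sA)^2.
$$
The right-hand side is summable in $n$, so Borel--Cantelli implies that Lebesgue-a.e.\ $x \in \sA$ returns to $\sA$ only finitely often under forward iteration of $\phi$.

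Next I introduce the terminal-visit set
$$
W_\sA \;=\; \sA \smallsetminus \bigcup_{n \geq 1} \phi^{-n}(\sA) \;=\; \{x \in \sA : \phi^n(x) \notin \sA \text{ for all } n \geq 1\}.
$$
This set is measurable, and I claim it is wandering: if $x \in W_\sA \cap \phi^k(W_\sA)$ for some $k > 0$, then $y = \phi^{-k}(x) \in W_\sA$ satisfies $\phi^k(y) = x \in \sA$, contradicting that $y \in W_\sA$; the case $k < 0$ follows by symmetry. Hence $W_\sA \cap \phi^k(W_\sA) = \nullset$ for every $k \neq 0$, and the same disjointness passes to each translate $\phi^m(W_\sA)$, so every $\phi^m(W_\sA)$ is wandering in the stated sense. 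Moreover, for a.e.\ $x \in \sA$ the quantity $n_0 = \max\{n \geq 0 : \phi^n(x) \in \sA\}$ is finite by the previous paragraph, and $\phi^{n_0}(x) \in W_\sA$, so $x \in \phi^{-n_0}(W_\sA)$. Thus $\bigcup_{n \geq 0} \phi^{-n}(W_\sA)$ covers $\sA$ up to a null set.

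Finally, I would choose a countable family of cylinders $\{\sA_i\}_{i \in \N}$ whose union exhausts $\bP_1$ up to measure zero; any completely periodic direction on $\bP_1$ yields such a family, its cylinders covering the surface outside a null set of saddle connections and singular leaves. Applying the previous construction to each $\sA_i$ and setting $\sW = \{\phi^n(W_{\sA_i}) : i \in \N,\; n \in \Z\}$ produces a countable family of wandering sets whose union covers $\bigcup_i \sA_i$, and hence $\bP_1$, up to a Lebesgue null set. The main obstacle I foresee is establishing the countable cylinder cover of $\bP_1$; this should follow from the existence of periodic directions on $\bP_1$ arising from parabolic elements of the Veech group $\rho_1(\sG)$ developed in \cite{Higl1}.
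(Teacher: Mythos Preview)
Your argument is correct and follows essentially the same route as the paper: apply Theorem~\ref{thm:mixing} with $\sA=\sB$ to get a summable return series, use Borel--Cantelli to conclude that almost every point of $\sA$ has only finitely many returns, partition $\sA$ (up to a null set) into wandering pieces according to return behavior, and then patch over a countable cylinder cover. The only cosmetic differences are that the paper indexes its wandering pieces $W_{i,k}$ by the \emph{number} of returns rather than by the \emph{time} of the last visit, and---addressing what you flag as your ``main obstacle''---the paper sidesteps any appeal to parabolic directions by simply taking $\{\sA_i\}$ to be the explicit horizontal cylinders of $\bP_1$ pictured in Figure~\ref{fig:horizontal}, which visibly cover the surface.
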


\begin{figure}
\includegraphics[width=4in]{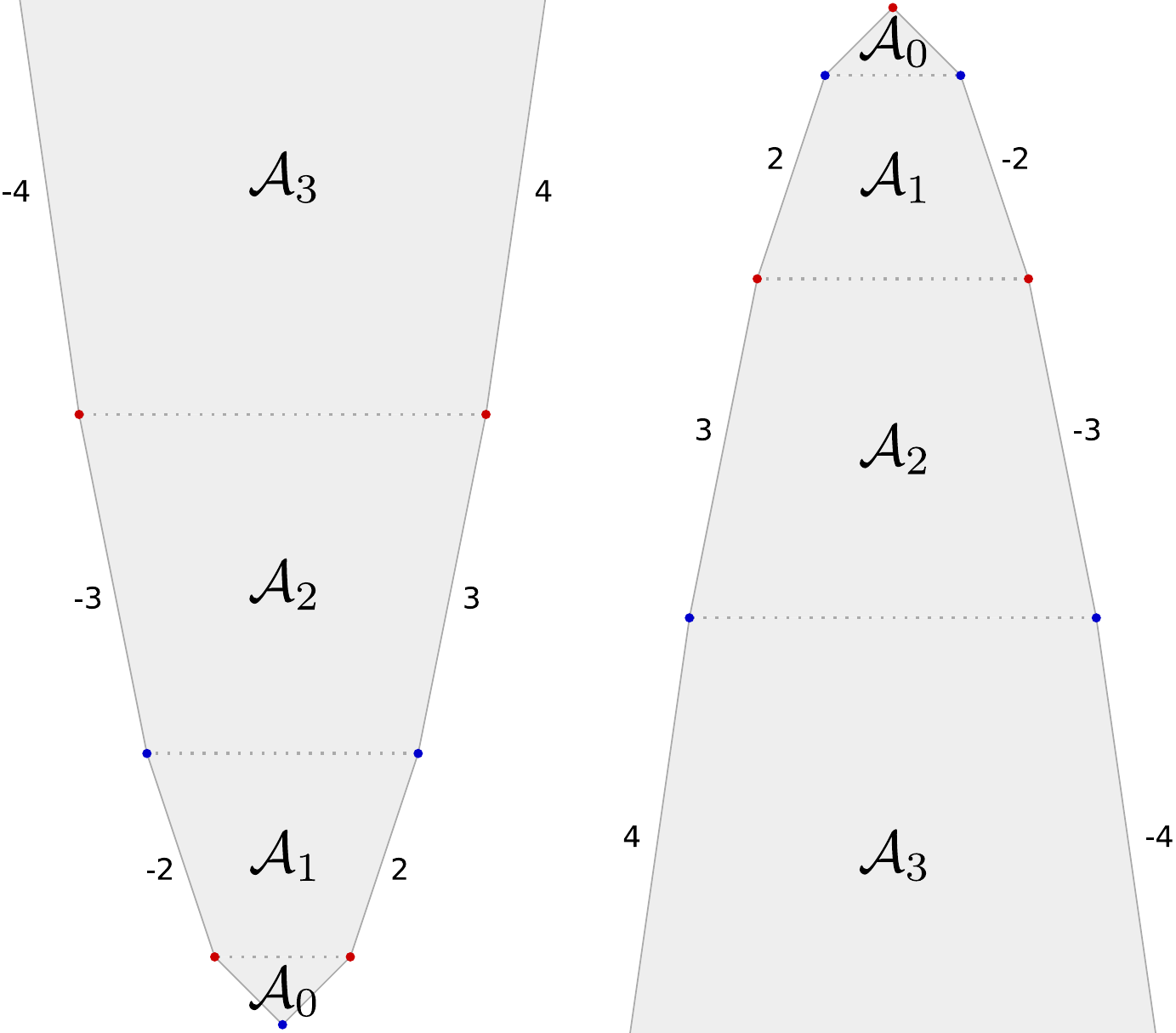}
\caption{The horizontal cylinders $\sA_i$ of $\bP_1$.}
\label{fig:horizontal}
\end{figure}

Because of non-recurrence, we tend to think points as the wrong objects to act to study hyperbolic affine automorphisms of $\bP_1$.
Fortunately, some famous observations of Thurston suggest that acting on simple closed curves might be more natural.

We briefly recall Thurston's definition of a pseudo-Anosov homeomorphism of a closed surface $M$ following \cite{Th86}.
Let $\sS = \sS(M)$ be the collection of homotopically non-trivial simple closed curves up to isotopy on $M$. Letting $i:\sS \times \sS \to \R$ denote geometric intersection number
(i.e., the minimum number of transverse intersections among curves from the isotopy classes), we have an induced map 
$$i_\ast:\sS \to \R^\sS \quad \text{defined by} \quad i_\ast(\alpha)(\beta)=i(\alpha,\beta)$$
so that the image is contained in the non-negative cone of $\R^\sS$. We let $\bbP \R^\sS$ denote the projectivization $\R^\sS/(\R \smallsetminus \{\0\})$
and $P:\R^\sS \to \bbP \R^\sS$ denote the projectivization map.
We endow $\R^\sS$ with the product topology and $\bbP \R^\sS$ with the quotient topology.
Thurston observed that for a compact surface $M$ the projectivized image $P \circ i_\ast\big(\sS(M)\big)$ has compact closure
which we will denote by $\sP \sS(M)$. As long as the surface $M$ has negative Euler characteristic,
the closure $\sP\sS(M)$ is a sphere of dimension one less than the dimension of the Teichm\"uller space $\sT(M)$ (and forms its {\em Thurston boundary}).
The space $\sP \sS(M)$ was identified with the space of projective measured foliations on $M$ which Thurston also introduced and gives a geometric meaning
to $\sP \sS$.
Homeomorphisms of $M$ naturally act on $\sP\sS(M)$ and isotopic homeomorphisms act in the same way. 
A {\em pseudo-Anosov} homeomorphism of $M$ is a homeomorphism $\phi:M \to M$ for which 
there are non-zero $\mu^u$ and $\mu^s$ in $\R^\sS$ such that their projections lie in $\sP \sS(M)$
and so that there is a
$\lambda>0$ so that $\mu^u \circ \phi^{-1}=\lambda \mu^u$ and $\mu^s \circ \phi=\lambda^{-1} \mu^s$. The action of a pseudo-Anosov homeomorphism $\phi$ on $\sP \sS(M)$ is analogous the action of a hyperbolic isometry on the boundary of hyperbolic space: 
For any $\alpha \in \sS(M)$,
\begin{equation}
\label{eq:pA curves}
\lim_{n \to +\infty} P \circ i_\ast\big(\phi^n(\alpha)\big)=P(\mu^u) 
\quad \text{and} \quad
\lim_{n \to +\infty} P \circ i_\ast\big(\phi^{-n}(\alpha)\big)=P(\mu^s),
\end{equation}
see \cite[Corollary 12.3]{FLP}. Thurston showed that more generally for any $P(\nu) \in \sP \sS(M) \smallsetminus \{P(\mu^u), P(\mu^s)\}$,
\begin{equation}
\label{eq:pA PS}
\lim_{n \to \infty} P(\nu \circ \phi^{-n})=P(\mu^u) 
\quad \text{and} \quad 
\lim_{n \to \infty} P(\nu \circ \phi^{n})=P(\mu^s).
\end{equation}

This paper investigates the extent to which the above results hold for the surface $\bP_1$.
We begin with trying to emulate the above definitions for $\bP_1$.

Perhaps we have been slightly abusing terminology to call $\bP_1$ a ``surface''. It has two infinite cone singularities coming from the vertices of the polygonal parabola. 
These singularities do not have neighborhoods locally homeomorphic to an open subset of the plane, so we define $\bP_1^\circ$ to be $\bP_1$ with these singularities removed.
The space $\bP_1^\circ$ is an infinite genus topological surface. We define $\sS = \sS(\bP_1^\circ)$ to be the collection of simple closed curves in $\bP_1^\circ$ up to isotopy and 
define $i_\ast:\sS \to \R^\sS$, $\bbP \R^\sS$ and $P:\R^\sS \to \bbP \R^\sS$ as above. We define
$$\sP \sS(\bP_1^\circ)=\overline{P \circ i_\ast\big(\sS(\bP_1^\circ)\big)}$$
only this time we note that $\sP \sS$ is not compact. (If $\alpha_n$ is a sequence of simple closed curves exiting every compact subset of $\bP_1^\circ$ then $
\lim i_\ast(\alpha_n)=\0 \in \R^\sS$ and no subsequence of $P \circ i_\ast(\alpha_n)$ converges in $\bbP \R^\sS$.) We show:

\begin{theorem}
\label{thm:geometric}
Fix a hyperbolic affine automorphism $\phi:\bP_1 \to \bP_1$ and let $\lambda^u_1 \in \R$ denote the expanding eigenvalue of $D \phi$.
Let $\mu^u$ and $\mu^s$ be the elements of $\R^{\sS(\bP_1^\circ)}$ corresponding to the transverse measures on $\bP_1$ to foliations parallel to the expanding and contracting 
eigenspaces of $D \phi$ respectively. Then:
\begin{enumerate}
\item We have $\mu^u \circ \phi^{-1}=|\lambda^u_1| \mu^u$ and $\mu^s \circ \phi^{-1}=|\lambda^u_1|^{-1} \mu^s$.
\item For any $\alpha \in \sS(\bP_1^\circ)$, equation \eqref{eq:pA curves} holds. In fact, in the space $\R^{\sS(\bP^\circ_1)}$ we have
$$\lim_{n \to \infty} {\textstyle \frac{n^{\frac{3}{2}}}{|\lambda^u_1|^n}} i_\ast\big(\phi^{n}(\alpha)\big)= \frac{\mu^s(\alpha)}{4 \beta^{\frac{3}{2}} \sqrt{2 \pi}\,|\bu^u \wedge \bu^s|} \mu^u
\quad \text{and} \quad 
\lim_{n \to \infty} {\textstyle \frac{n^{\frac{3}{2}}}{|\lambda^u_1|^n}} i_\ast\big(\phi^{-n}(\alpha)\big)= \frac{\mu^u(\alpha)}{4 \beta^{\frac{3}{2}} \sqrt{2 \pi}\,|\bu^u \wedge \bu^s|} \mu^s
$$
where $\beta$ is given as in Theorem \ref{thm:mixing}
and $\bu^u$ and $\bu^s$ denote expanding and contracting unit eigenvectors of $D \phi$.
In particular, we have $P \circ i_\ast\big(\phi^n(\alpha)\big) \to P(\mu^u)$
and $P \circ i_\ast\big(\phi^{-n}(\alpha)\big) \to P(\mu^s)$
so that
$P(\mu^u), P(\mu^s) \in \sP \sS(\bP_1^\circ)$.
\end{enumerate}
\end{theorem}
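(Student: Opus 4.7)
My plan has two parts mirroring the theorem. Part (1) follows quickly from the dynamics of $\phi$ on the invariant foliations, so I describe it briefly. Part (2) is the main content; since $\R^{\sS(\bP_1^\circ)}$ carries the product topology, it suffices to establish, for each fixed $\delta \in \sS(\bP_1^\circ)$, the pointwise asymptotic
$$\lim_{n \to \infty} \frac{n^{3/2}}{|\lambda^u_1|^n}\, i\bigl(\phi^n(\alpha), \delta\bigr) = \frac{\mu^s(\alpha)\, \mu^u(\delta)}{4\, \beta^{3/2} \sqrt{2\pi}\, |\bu^u \wedge \bu^s|}.$$
The analogous statement for $\phi^{-n}$ then follows by replacing $\phi$ with $\phi^{-1}$, which swaps the roles of the stable and unstable foliations.

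For part (1), the foliations $\sF^u$ and $\sF^s$ whose leaves are parallel to $\bu^u$ and $\bu^s$ are preserved setwise by $\phi$, which dilates $\sF^u$-leaves by $\lambda^u_1$ and $\sF^s$-leaves by $\lambda^s_1 = \pm (\lambda^u_1)^{-1}$. Since $\mu^u$ records Euclidean length transverse to $\sF^u$, and $\phi^{-1}$ dilates this transverse direction by $|\lambda^s_1|^{-1} = |\lambda^u_1|$, we obtain $\mu^u \circ \phi^{-1} = |\lambda^u_1|\, \mu^u$, and analogously $\mu^s \circ \phi^{-1} = |\lambda^u_1|^{-1}\, \mu^s$.

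For the base case of part (2), I take $\alpha$ and $\delta$ to be core curves of cylinders $\sA$, $\sB$ with core holonomy vectors $\vec a$, $\vec b$, neither parallel to $\bu^u$ nor $\bu^s$. For $n$ large, $\phi^n(\sA)$ and $\sB$ cross transversely, and their intersection is a disjoint union of $i(\phi^n(\alpha), \delta)$ congruent parallelograms of area $h_{\phi^n\sA}\, h_\sB / |\sin \theta_n|$, where $h_{\phi^n\sA}$ is the height of $\phi^n(\sA)$ and $\theta_n$ is the angle between $D\phi^n(\vec a)$ and $\vec b$. Writing $\vec a = a_u \bu^u + a_s \bu^s$ and using area preservation of $\phi^n$, I compute $h_{\phi^n\sA} \sim c_\sA h_\sA /(|a_u|\, |\lambda^u_1|^n)$ and $|\sin\theta_n| \to |\bu^u \wedge \vec b|/|\vec b|$. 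Substituting Theorem \ref{thm:mixing} into the identity $\Area(\phi^n(\sA) \cap \sB) = i(\phi^n(\alpha), \delta) \cdot h_{\phi^n\sA} h_\sB / |\sin\theta_n|$, together with $|a_u| = |\vec a \wedge \bu^s|/|\bu^u \wedge \bu^s|$ and the identifications $\mu^s(\alpha) = |\vec a \wedge \bu^s|$ and $\mu^u(\delta) = |\bu^u \wedge \vec b|$ valid for cylinder core curves, yields the claimed asymptotic with matching constants.

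The main obstacle is extending this from cylinder core curves to arbitrary simple closed curves. For a general $\alpha \in \sS(\bP_1^\circ)$, the flat geodesic representative $\bar\alpha$ need not be straight but may be a closed concatenation of saddle connection segments $\{\vec v_i\}$ meeting at singularities, so the individual segments are not covered directly by Theorem \ref{thm:mixing}. My plan is to approximate thin rectangular neighborhoods of the segments by flat cylinders, or alternatively to extract from the proof of Theorem \ref{thm:mixing} a version applicable to arbitrary Borel subsets of $\bP_1$, sum the parallelogram contributions $\#\bigl(D\phi^n(\vec v_i) \cap \vec w_j\bigr)$ over pairs of segments of $\bar\alpha$ and $\bar\delta$, and invoke the identities $\sum_i |\vec v_i \wedge \bu^s| = \mu^s(\alpha)$ and $\sum_j |\bu^u \wedge \vec w_j| = \mu^u(\delta)$, expressing the transverse measures as total variations along the broken geodesics, to assemble the general formula. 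The technical points requiring care are uniformity of the asymptotic across the approximating shapes, and verification that corner singularities of $\bar\alpha, \bar\delta$ do not produce spurious intersections in the limit.
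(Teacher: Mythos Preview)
Your part (1) is fine and matches the paper. Your base case for cylinder core curves is correct, though it routes through Theorem \ref{thm:mixing} whereas the paper goes directly through the algebraic-intersection asymptotics of Theorem \ref{thm:parabolic intersections}; since Theorem \ref{thm:mixing} is itself derived from that result, your detour is harmless but adds no leverage.

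The genuine gap is in your extension to general simple closed curves. Your two proposed fixes---approximating saddle-connection segments by thin cylinders, or upgrading Theorem \ref{thm:mixing} to arbitrary Borel sets---are both problematic. A saddle connection is typically not a core curve of any cylinder, and a thin rectangular neighborhood of it need not embed as a cylinder on $\bP_1$; nor is it clear what ``approximate'' should mean here in a way that controls geometric intersection number. Extending the mixing formula to Borel sets would essentially require re-deriving the whole asymptotic machinery from scratch, and even then you would still need a lemma comparing the resulting area count to $i(\phi^n(\alpha),\delta)$.

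The paper avoids all of this by working directly with saddle connections rather than cylinders. The key ingredients you are missing are: (i) the extension \eqref{eq:extended intersection number} of algebraic intersection number to pairs in $H_1(\bP_1,\Sigma;\R)$, to which Theorem \ref{thm:parabolic intersections} applies with $k=0$ for each pair $(\alpha_i,\gamma_j)$ of saddle connections; and (ii) a comparison (Lemma \ref{lem:intersection parabola 2} and Proposition \ref{prop:geometric intersections}) showing that the geometric intersection number $i(\alpha,\gamma)$ differs from $\sum_{i,j} |\alpha_i \cap \gamma_j|$ by at most the bounded quantity $2kl$. Since $kl$ is fixed independent of $n$, this bounded error washes out after dividing by $|\lambda^u_1|^n n^{-3/2}$, and summing the saddle-connection contributions gives exactly $\mu^s(\alpha)\mu^u(\gamma)$ via your identities $\sum_i |\vec v_i \wedge \bu^s|=\mu^s(\alpha)$ and $\sum_j |\bu^u \wedge \vec w_j|=\mu^u(\gamma)$. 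This also cleanly handles your worry about corner singularities: they contribute at most $kl$ linked-interval intersections, absorbed into the bounded error.
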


On the other hand, it is not true \eqref{eq:pA PS} holds for every $P(\nu) \in \sP \sS(\bP_1^\circ) \smallsetminus \{P(\mu^u), P(\mu^s)\}$.
For every direction $\theta$ of irrational slope and every $c>1$, there is a direction $\theta'$ and a homeomorphism $h:\bP_1 \to \bP_c$ (in the isotopy class of a standard identification between these surfaces) which carries
the foliation on $\bP_1$ in direction $\theta$ to the foliation in direction $\theta'$ on $\bP_c$. Furthermore, if $\theta$ was stabilized by a hyperbolic affine automorphism
$\phi:\bP_1 \to \bP_1$ then $\theta'$ is stabilized by an affine automorphism $\phi':\bP_c \to \bP_c$ which is the same up to the canonical identification and isotopy.
We can pull back the transverse measure on $\bP_c$ in direction $\theta'$ to obtain another measured on the foliation of $\bP_1$ in direction $\theta$ which is stabilized by $\phi$. This was proved in a more general context in \cite[Theorem 4.4]{Hinf}. This new measure corresponds to a distinct $\phi$-invariant element of $\R^{\sS(\bP^\circ_1)}$.
In fact we can see this element lives in $\sP \sS(\bP_1^\circ)$ in many cases because the straight-line flow in many eigendirections of pseudo-Anosov homeomorphisms is ergodic \cite[Theorem 4.5]{Hinf}, so that we can obtain projective approximations by simple closed curves by flowing point forward until it returns close and closing it up.
As we increase the flow time, convergence to $P(\mu^u)$ is guaranteed for almost every starting point by the ratio ergodic theorem.

We have shown that elements in $P(\sS(\bP_1^\circ))$ are attracted under $\phi$ and $\phi^{-1}$-orbits respectively by $P(\mu^u)$ and $P(\mu^s)$, but noted that this does not hold on the closure $\sP \sS(P_1^\circ)$, so it is natural to wonder how attractive $P(\mu^u)$ and $P(\mu^s)$ are in other contexts.

Since $\bP_1$ is a translation surface, it is natural to orient our foliations in each direction and to consider our transverse measures to be signed measures. 
Informally, let $H_1(\bP_1^\circ; \R)$ be real weighted finite sums of homology classes of closed curves in $\bP_1^\circ$, and let $H_1(\bP_1, \Sigma; \R)$ be real weighted finite sums of homology classes of closed curves and curves joining the singularities of $\bP_1$. Algebraic intersection number gives
a weakly non-degenerate bilinear map
$$\cap:H_1(\bP_1^\circ; \R) \times H_1(\bP_1, \Sigma; \R) \to \R.$$
Let $\sH=\Hom\big(H_1(\bP_1, \Sigma; \R), \R\big)$ be the collection of linear maps from
$H_1(\bP_1, \Sigma; \R)$ to $\R$ and let $\sP \sH=\sH/(\R \smallsetminus \{0\})$ be the projectivization of this space. In a parallel construction to geometric intersection number, we get a map induced by algebraic intersection and a projectivization map:
$$\cap_\ast:H_1(\bP_1^\circ; \R) \to \sH, \qquad P:\sH \to \sP\sH.$$

Given an oriented arc on $\bP_1$ we can lift it to the universal cover and project it to the plane under the developing map. The {\em holonomy vector} of the arc is the difference of the developed end point and the starting point. Holonomy gives rise to linear maps
$$\hol_1: H_1(\bP_1^\circ; \R) \to \R^2 
\quad \text{and} \quad 
\hol_1: H_1(\bP_1, \Sigma; \R) \to \R^2,$$
where we write $\hol_1$ to indicate we are computing holonomy on $\bP_1$.
Given a direction described by a unit vector $\bu \in \R^2$ we get an element of $\sH$ using the linear map
\begin{equation}
\label{eq:dual class to foliation}
H_1(\bP_1, \Sigma; \R) \to \R; \quad \sigma \mapsto \bu \wedge \hol_1(\sigma)
\end{equation}
where $\wedge: \R^2 \times \R^2 \to \R$ is the usual wedge product:  $(a,b) \wedge (c,d)=ad-bc$.

If $\phi:\bP_1 \to \bP_1$ is an affine homeomorphism with hyperbolic derivative, then the choice of unit unstable and stable eigenvectors
of $D \phi$ give rise via \eqref{eq:dual class to foliation} to respective elements $\mu^u$ and $\mu^s$ of 
$\sH$ satisfying
$$\mu^u \circ \phi^{-1}=\lambda^u_1 \mu^u
\quad \text{and} \quad
\mu^s \circ \phi^{-1}=\lambda^s_1 \mu^s$$
where $\lambda^u_1$ and $\lambda^s_1$ are the expanding and contracting eigenvalues of $D \phi$.
We show that the projectivized classes $P(\mu^u)$ and $P(\mu^s)$ respectively attract and repel every element of
$P \circ \cap_\ast\big(H_1(\bP_1^\circ; \R)\big)$,
but unlike in prior results the rate of polynomial decay depends on the chosen homology class.

\begin{theorem}
\label{thm:homology}
If $\gamma \in H_1(\bP_1^\circ; \R)$ is non-zero then 
$$
\lim_{n \to +\infty} P \circ \cap_\ast\big(\phi^n(\gamma)\big)=P(\mu^u) 
\quad \text{and} \quad
\lim_{n \to +\infty} P \circ \cap_\ast\big(\phi^{-n}(\gamma)\big)=P(\mu^s).
$$
Moreover, there is a descending sequence of subspaces indexed by $\N$
$$H_1(\bP_1^\circ; \R)=S_0 \supset S_1 \supset S_2 \ldots \quad \text {so that:}$$
\begin{enumerate}
\item Each $S_{j+1}$ is codimension one in $S_j$ (i.e., $S_{j+1}$ is the kernel of a surjective linear map $S_j \to \R$).
\item The intersection $\bigcap_{j\geq 0} S_j$ is the zero subspace $\{\0\}$.
\item For any $\gamma \in S_j \smallsetminus S_{j+1}$ the sequence
$\frac{n^{j+\frac{3}{2}}}{(\lambda_1^u)^n} \cap_\ast\big(\phi^n(\gamma)\big)$
converges in $\sH$ to a non-zero scalar multiple of $\mu^u$.
\end{enumerate}
\end{theorem}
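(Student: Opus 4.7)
The strategy rests on the integral formula for algebraic intersection number promised in the abstract, which expresses $\gamma \cap \sigma$ as an integral over the deformation parameter $c \in [1,\infty)$ of quantities built from the holonomies $\hol_c(\gamma)$ and $\hol_c(\sigma)$, weighted by a measure carrying a $\sqrt{c-1}$-type factor near the endpoint $c=1$. Via the canonical identifications of \cite[Theorem~7]{Higl1}, the affine map $\phi$ acts on $\hol_c$ by the matrix $\rho_c(g)$, so decomposing in the eigenbasis $\{\bu^u_c,\bu^s_c\}$ turns the integrand for $\phi^n(\gamma) \cap \sigma$ into a sum of two pieces carrying factors $(\lambda_c^u)^n$ and $(\lambda_c^s)^n$. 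Since $|\lambda_c^u|$ is maximal at $c=1$ with local expansion
$$\lambda_c^u = \lambda_1^u\,\exp\bigl(-\beta(c-1)+O((c-1)^2)\bigr)$$
by Lemma~\ref{lem:eigenvalue}, the unstable contribution dominates and is localized near the endpoint.

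Setting $t=c-1$ and applying Watson's lemma to the resulting Laplace-type integral yields the asymptotic expansion
$$\phi^n(\gamma) \cap \sigma \;\sim\; (\lambda_1^u)^n \sum_{j \geq 0} \frac{\Gamma(j+\tfrac{3}{2})}{(n\beta)^{j+3/2}} \sum_{k+l=j} a_k(\gamma)\,b_l(\sigma),$$
where $a_k(\gamma)$ is the $k$-th Taylor coefficient at $c=1$ of the ``unstable coordinate'' $c \mapsto (\bu^s_c \wedge \hol_c(\gamma))/(\bu^s_c \wedge \bu^u_c)$, and $b_l(\sigma)$ is the $l$-th Taylor coefficient of the corresponding $\sigma$-factor—with $b_0(\sigma)$ a non-zero constant multiple of $\mu^u(\sigma)$. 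Set $\ell_j(\gamma):=a_j(\gamma)$ and $S_j := \bigcap_{i<j}\ker \ell_i$.

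Assertion~(3) then follows at once: for $\gamma \in S_j \setminus S_{j+1}$ all Watson terms of order below $j$ vanish, and the leading $j$-th term equals $\ell_j(\gamma)\,b_0(\sigma)\,\Gamma(j+\tfrac{3}{2})/\beta^{j+3/2}$, so $\frac{n^{j+3/2}}{(\lambda_1^u)^n}\cap_\ast(\phi^n(\gamma))$ converges in $\sH$ to a non-zero scalar multiple of $\mu^u$. The principal convergence limits stated at the top of the theorem follow by combining (3) with (2) and projectivizing. For (1), surjectivity of $\ell_j|_{S_j}$ reduces to exhibiting, in each $S_j$, a class whose $j$-th Taylor coefficient is non-zero; such classes are produced using $\phi$-equivariance to shift Taylor data along the orbit of a suitable seed (e.g.\ a cycle in one of the horizontal cylinders $\sA_i$ of Figure~\ref{fig:horizontal}), the analytic non-triviality of $c \mapsto \rho_c(g)$ guaranteeing that each new Taylor layer genuinely detects new information.

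The main obstacle is assertion~(2): a class $\gamma \in \bigcap_j S_j$ has the analytic function $c \mapsto \bu^s_c \wedge \hol_c(\gamma)$ vanishing to all orders at $c=1$, hence identically zero on a neighborhood of $c=1$, so $\hol_c(\gamma)$ is pinned to the rotating eigenline $\R\bu^s_c$. One then deduces $\hol_c(\gamma)\equiv 0$ there—the analytic curve $c \mapsto \hol_c(\gamma)$ cannot both be analytic and track $\R\bu^s_c$ unless it is the zero curve, which uses the concrete form of $\rho_c(g)$. Feeding this back into the integral formula forces $\gamma \cap \sigma = 0$ for every $\sigma$, and weak non-degeneracy of the algebraic intersection pairing gives $\gamma=\0$. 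Making the ``rotating eigenline'' step rigorous, and in particular handling the kernel of $\hol_c$ in the infinite-genus setting, is the subtlest part of the argument; a secondary technical point is controlling the Watson remainder uniformly enough in $\sigma$ to give convergence in $\sH$.
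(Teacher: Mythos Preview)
Your overall strategy---integral formula, eigendecomposition of $\rho_c(g)$, Laplace-type asymptotics localized at $c=1$, then a filtration by Taylor vanishing order of the unstable projection---is exactly the paper's approach. But two things are off.

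First, a factual slip: the integral runs over $t\in[0,\pi]$ with $c=\cos t\in[-1,1]$, not over $c\in[1,\infty)$; the weight near the endpoint is $(1-c)\,dt\sim\sqrt{1-c}\,dc$, and the eigenvalue maximum at $c=1$ is approached from \emph{below}. This does not break the asymptotic analysis, but your setup sentence misstates the picture.

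Second, and more substantively, you are missing the key structural fact that makes (1) and (2) easy: the deformation holonomy map
\[
\widetilde\hol:H_1(\bP_1^\circ;\R)\longrightarrow\{(p,q)\in\R[c]^2:\ q(-1)=0\}
\]
is an \emph{isomorphism} (Propositions~\ref{prop:isomorphism 1} and \ref{prop:isomorphism 2}). With this in hand, (2) is immediate: $\gamma\neq 0$ gives a nonzero polynomial vector $\widetilde\hol\,\gamma$, and a short algebraic argument (Lemma~\ref{lem: barwedge}: if $P_c^u\v\equiv 0$ then $\lambda_c^s$ would be a rational function $p/q$, forcing $p^2\pm q^2=tpq$ and hence a shared root of $p,q$) rules out $P_c^u\,\widetilde\hol\,\gamma\equiv 0$. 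No appeal to weak non-degeneracy of the intersection pairing is needed, and there is no ``kernel of $\hol_c$ in infinite genus'' subtlety---holonomy is literally injective here. The same isomorphism handles (1) by direct construction: one simply writes down polynomial pairs $(p,q)$ with $q(-1)=0$ that approximate the stable eigendirection of $\rho_c(g)$ at $c=1$ to order exactly $j-1$. Your proposed route to (1) via ``$\phi$-equivariance shifting Taylor data along the orbit of a seed'' does not obviously work: since $\rho_c(g)$ is invertible for each $c$, the action of $\phi$ preserves the order of vanishing of $P_c^u\,\widetilde\hol\,\gamma$ at $c=1$, so it stabilizes each $S_j$ rather than moving classes between layers.
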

We remark that statement (3) also holds in the opposite direction, i.e., 
$\frac{n^{j+\frac{3}{2}}}{(\lambda_1^s)^{-n}} \cap_\ast\big(\phi^{-n}(\gamma)\big)$
converges to a multiple of $\mu^s$, but the rate given by $j$ is determined by a different sequence of nested subspaces. 
This theorem is a consequence of our Theorem \ref{thm:parabolic intersections} which is stronger in that it gives a formula for
the constant appearing in the limit in statement (3). The proof of Theorem \ref{thm:homology} contains a concrete description of the subspaces $S_j$
and gives a slightly different formula for the limiting constant; see \eqref{eq:Thm 4 constant}.

Again we could consider trying to extend this type of convergence to a larger space, and $\sP \sH$ itself would be a natural candidate but this space also contains fixed points corresponding to measured foliations on $\bP_c$. It is not clear to the author if there is a natural intermediate space between
$P \circ \cap_\ast\big(H_1(\bP_1^\circ; \R)\big)$ and $\sP \sH$ in which we see $P(\mu^u)$ and $P(\mu^s)$ as the global attractor and repeller.

We will now briefly discuss the method we use to prove these results. The main idea is to completely understand algebraic intersection numbers,
and we provide a formula for computing the algebraic intersection numbers between curves on $\bP_1$. It first needs to be observed that the surfaces $\bP_c$ for $c \geq 1$ are canonically homeomorphic, with the homeomorphisms coming from viewing $\bP_c$ for $c \geq 1$ as a parameterized deformation of translation surfaces. Fixing a curve $\gamma \in \bP_1$ representing a homology class we can use these canonical homeomorphisms to obtain corresponding curves in $\bP_c$. We observe that the holonomies of these curves measured on $\bP_c$ denoted $\hol_c \gamma \in \R^2$ depend polynomially in $c$. Thus this quantity makes sense for all $c \in \R$. In Lemma \ref{lem:intersection parabola}, we show that for any two curves $\gamma$ and $\sigma$ representing homology classes on $\bP_1$, their algebraic intersection number is given by 
\begin{equation}
\label{eq:intersection}
\gamma \cap \sigma = \frac{1}{2 \pi} \int_0^\pi \big((\hol_{\cos t}~\gamma) \wedge (\hol_{\cos t}~\sigma) \big) (1-\cos t)~dt.
\end{equation}
This result gives a mechanism to reduce the Theorems above to questions involving the asymptotics of certain trigonometric integrals.

\subsection*{Organization of article}
\begin{itemize}
\item In \S \ref{sect:background} we provide a condensed description of main ideas we use from the theory of translation surfaces. We give more formal descriptions of the homological spaces mentioned above.
\item In \S \ref{sect:subgroup deformation} we investigate the continuous family of representations that arises out of considering the Veech groups of the surfaces $\bP_c$. This family of representations is studied in the abstract and we prove results about eigenvalues that are crucial for our later arguments.
\item Section \ref{sect:parabola surface} addresses the geometry and dynamics of $\bP_1$.
\begin{itemize}
\item In \S \ref{sect:construction} we review the construction of the surfaces $\bP_c$.
\item In \S \ref{sect:homology} we compute generating sets for the homological spaces we work with.
\item In \S \ref{sect:deformation} we explain how holonomies of homology classes deform as we vary $c$ and formalize our intersection number formula described in \eqref{eq:intersection}.
\item In \S \ref{sect:affine} we describe the affine automorphism groups of the surfaces $\bP_c$ for $c \geq 1$.
\item In \S \ref{sect:algebraic} we prove Theorem 
\ref{thm:parabolic intersections}, which is the most important result in the paper:
it gives an asymptotic formula for algebraic intersections of the form $\phi^n(\gamma) \cap \sigma$.
We use it to prove Theorem \ref{thm:homology}.
\item In \S \ref{sect:intersection} we prove our asymptotic formula for intersections of cylinder described by Theorem \ref{thm:mixing}. The key is the observation that
the area of intersection of two cylinders is largely governed by algebraic intersection numbers between the core curves.
\item In \S \ref{sect:geometric} we consider geometric intersection numbers and prove Theorem \ref{thm:geometric}.
\end{itemize}
\end{itemize}

\section{Background and notation}
\label{sect:background}
A translation surface is a topological surface with an atlas of charts to the plane so that the transition functions are translations. Equivalently, an translation surface $\bS$ is a surface whose universal cover is equipped with 
a local homeomorphism called the {\em developing map} $\dev$ from the universal cover $\tilde \bS$ to $\R^2$ so that for any deck transformation $\Delta:\tilde \bS \to \tilde \bS$, there is a translation $T:\R^2 \to \R^2$ so
that $\dev \circ \Delta=T \circ \dev$. Such a surface should be considered equivalent to the surface obtained by 
post-composing the developing map with a translation. Note that this definition does not allow for cone singularities on the surface but they may be treated as punctures.

In this paper we will be considering surfaces of infinite genus, but our surfaces will be decomposable into countably many triangles. Each translation surface we consider $\bS$ will be a countable disjoint union of triangles with edges glued together pairwise by translations in such a way so that each point on the interior of an edge has a neighborhood isometric to an open subset of the plane. (Figure \ref{fig:parabola2} depicts such a decomposition for $\bP_1$.) We will use $\Sigma$ to denote the {\em singularities} which are the equivalence classes of the vertices of the triangles in $\bS$. The surface $\bS \smallsetminus \Sigma$ is a translation surface, while $\bS$ itself is not a surface if countably many triangles meet at a singularity.

Let $R$ be a ring containing $\Z$ such as $\Z$ or $\R$.
For us, first relative homology over $R$ is $H_1(\bS,\Sigma;R)$ may be viewed as the $R$-module generated by oriented edges of the triangulation of $\bS$ and subject to the conditions that the sum of two copies of the same edge with opposite orientations is zero as is the sum of edges oriented as the boundary of a triangle. 
Less formally, $H_1(\bS,\Sigma;R)$ may be viewed as homology classes consisting of finite weighted sums of curves joining points in $\Sigma$ and closed curves subject to the condition that they pass through only finitely many triangles. Any such curve is homotopic to a union of edges and the resulting class in $H_1(\bS,\Sigma;R)$
is independent of the choice of such a homotopy.

Let $\bS^\circ$ denote $\bS \smallsetminus \Sigma$.
We define $H_1(\bS^\circ;\Z)$ to be the abelianization of $\pi_1(\bS^\circ)$.
 Note that by compactness a closed curve in $\bS^\circ$ only intersects finitely many triangles defining $\bS$. We define $H_1(\bS^\circ;R)$ to be $H_1(\bS^\circ;\Z) \otimes_\Z R.$ Again, we may consider $H_1(\bS^\circ;R)$ to represent all finite sums of closed curves in $\bS^\circ$ weighted by elements of $R$.

We use $\cap$ to denote the algebraic intersection number 
$$\cap:H_1(\bS^\circ;\Z) \times H_1(\bS,\Sigma;\Z) \to \Z$$
which is bilinear. We follow the convention that $\alpha \cap \beta$ is positive if 
$\alpha$ is moving rightward and $\beta$ is moving upward. With $R$ as above, algebraic intersection extends to a bilinear map
$$\cap: H_1(\bS^\circ;R)  \times H_1(\bS,\Sigma;R) \to R.$$

Let $\gamma:[0,1] \to \bS$ be a curve and let $\tilde \gamma:[0,1] \to \tilde \bS$ be a lift to the universal cover. The holonomy vector of $\gamma$ is 
$$\hol~\gamma=\dev \circ \tilde \gamma(1) - \dev \circ  \tilde \gamma(0).$$
Observe that this quantity is independent of the choice of lift.
Holonomy yields linear maps 
$$\hol:H_1(\bS,\Sigma;\R) \to \R^2 \quad \text{and} \quad \hol:H_1(\bS^\circ;\R) \to \R^2$$
which send a weighted sum of curves to the corresponding weighted sum of holonomy vectors.
We use $\hol$ for both maps because the following map diagram commutes
\begin{center}
\begin{tikzcd}[column sep=small]
H_1(\bS^\circ;\R) \arrow{rr} \arrow[swap]{dr}{\hol}& &H_1(\bS,\Sigma;\R) \arrow{dl}{\hol}\\
& \R^2 & 
\end{tikzcd}
\end{center}
where $H_1(\bS^\circ;\R) \to H_1(\bS,\Sigma;\R)$ is induced by the inclusion $\bS^\circ$ of into $\bS$.


Let $\bS$ be a translation surface built as above by gluing together triangles in a countable set ${\mathcal T}$. Let $f:\bS \to \bS$ be a homeomorphism preserving the singular set $\Sigma$ and satisfying the condition that for any $T \in {\mathcal T}$, the image $f(T \smallsetminus \Sigma)$ intersects only finitely many triangles in ${\mathcal T}$. Then $f$ acts naturally on the homological spaces defined above.
We denote this action by $f_\ast$.
Let $\tilde f:\tilde \bS \to \tilde \bS$ be a lift to the universal cover. We say that $f$ is an {\em affine automorphism} of $\bS$ if there is an affine map $A:\R^2 \to \R^2$ so that
$$\dev \circ \tilde f = A \circ \dev.$$
This notion does not depend on the lift $\tilde f$. The derivative of an affine automorphism $f:\bS \to \bS$
is the element $Df \in \GL(2, \R)$ which is given by $A$ post-composed by a translation (so that the origin is fixed). 
We say $f$ is {\em hyperbolic} if $Df$ is a hyperbolic matrix, i.e., $Df$ has distinct eigenvalues
and their absolute values differ.
Observe that if $f$ is an affine automorphism then for any homology class $\sigma$ we have
$$\hol~f_\ast(\sigma)=Df \cdot \hol(\sigma).$$
The collection of all affine automorphisms of $\bS$ forms a group which we denote by $\Aff(\bS)$. In this paper, the {\em Veech group} of $\bS$ is $D\, \Aff(\bS) \subset \GL(2,\R)$.
(Some papers consider the Veech group to be $D\, \Aff(\bS) \cap \SL(2,\R)$.)

\section{A subgroup deformation}
\label{sect:subgroup deformation}

For each real $c \in \R$ let $\Gamma_c$ denote the subgroup of $\SL^\pm(2,\R)$ generated by $-I$ together with
the involutions
\begin{equation}
\label{eq:involutions}
A_c=\left[\begin{array}{cc}
-1 & 0 \\
0 & 1
\end{array}\right],
\quad
B_c=
\left[\begin{array}{cc}
-1 & 2 \\
0 & 1
\end{array}\right],
\quad \textrm{and} \quad
C_c=
\left[\begin{array}{cc}
-c & c-1 \\
-c-1 & c
\end{array}\right].
\end{equation}
We will later see that $\Gamma_c$ realizes the Veech group $D \Aff(\bP_c)$ for $c \geq 1$, but in this section we will be interested in proving
some results about how the deformation $\Gamma_c$ effects hyperbolic elements of $\Gamma_1$.

The group $PGL(2,\R)$ is naturally identified with the isometry group of the hyperbolic plane, and each of these three matrices act by reflections in a line in the hyperbolic plane (called the {\em axis} of the reflection).
Figure \ref{fig:deformation} depicts the axes of reflections corresponding to the matrices above. Observe that the relationship between the lines $A$ and $C$ changes as $c$ varies. We will be concerned with
the groups $\Gamma_c$ when $c \geq -1$. Observe that $C_{-1} = -B_{-1}$. When $-1<c<1$, the axes of reflections $A_c$ and $C_c$ intersect at an angle of $\theta$ where $\cos \theta=c$. When $\theta=\frac{2 \pi}{n}$, the group $\Gamma_c$ is discrete with the triangle formed by the reflection axes forming a fundamental domain for the group action.
When $c=1$, these reflection axes become tangent at infinity, and $\Gamma_c$ becomes the congruence two subgroup of $\SL^\pm(2,\Z)$ whose fundamental domain is the ideal triangle enclosed by the three reflection axes. When $c \geq 1$, the group stays discrete but is no longer a lattice since the fundamental domain which consists of the enclosed ultra-ideal triangle has infinite area.

\begin{figure}
\includegraphics[height=2.5in]{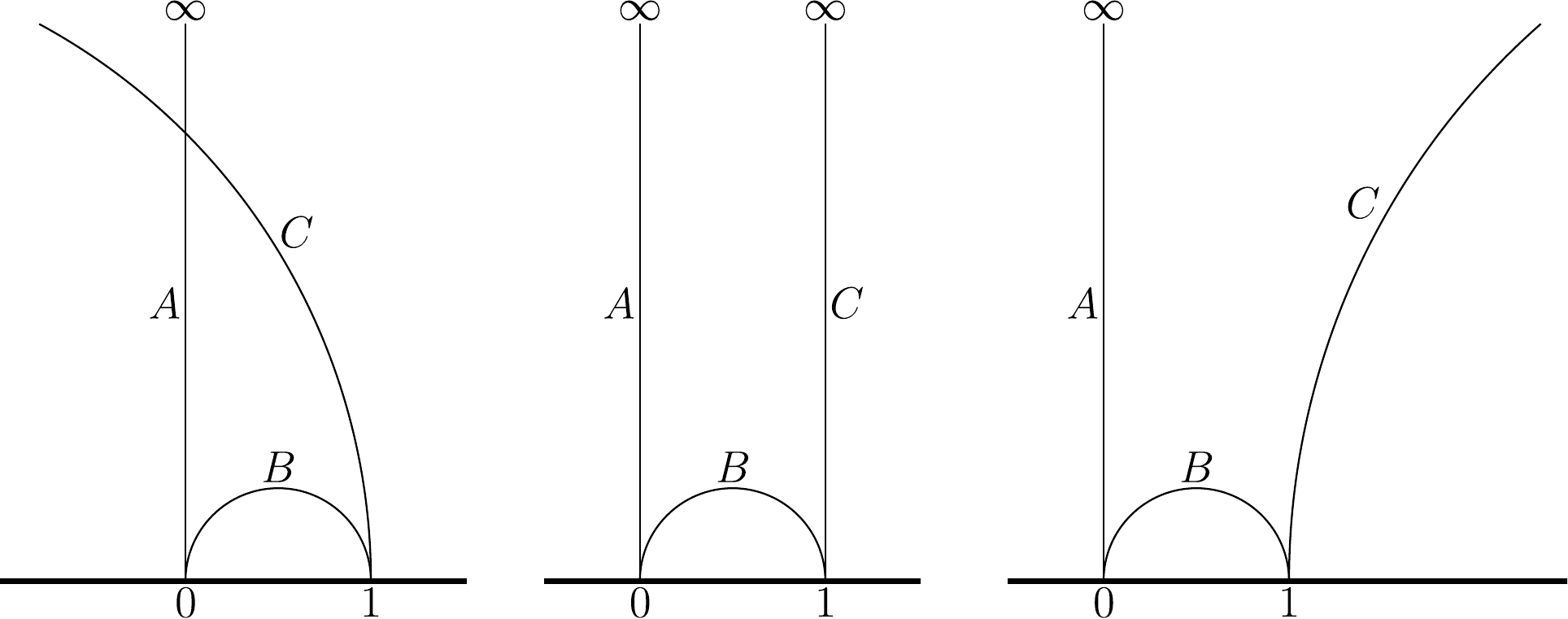}
\caption{The fixed point sets of $A_c$, $B_c$ and $C_c$ in the upper half plane model depicted from left to right for the cases of 
$c = \cos\frac{\pi}{4}$, $c = 1$, and $c = \frac{5}{4}$ from left to right.}
\label{fig:deformation}
\end{figure}

We will think of these groups as a continuous family of representations of the abstract group 
\begin{equation}
\label{eq:sG}{\mathcal G}=(C_2 \ast C_2 \ast C_2) \times C_2,
\end{equation}
where $C_2=\Z/2 \Z$ is the cyclic group of order two. 
We define the representation
\begin{equation}
\label{eq:rho}
\rho:{\mathcal G} \to \SL^\pm(2,\Z[c]) \quad \text{sending the generators of ${\mathcal G}$ to $A_c$, $B_c$, $C_c$ and $-I$ respectively.}
\end{equation}
For any $z \in \C$, we define 
\begin{equation}
\label{eq:rhoc}
\rho_{z}: {\mathcal G} \to \SL^\pm(2, \C); \quad g \mapsto \rho(g)(z),
\end{equation}
i.e., we evaluate $\rho(g)$ with $c=z$. Below and throughout this paper, we abuse notation by identifying the free variable $c$ temporarily with a constant,
so we will simply write $\rho_c$ rather than $\rho_z$. 

The following is the main result we will need about this family of representations.

\begin{lemma}
\label{lem:eigenvalue}
Let $g \in {\mathcal G}$ be such that $\rho_1(g)$ has a real eigenvalue $\lambda^u_1$ with $|\lambda^u_1| > 1$.
For each $c \in \R$, let $\lambda_c^u$ denote the choice of an eigenvalue realizing the maximum value of the absolute value of an eigenvalue
of $\rho_c(g)$. Then:
\begin{enumerate}
\item We have $|\lambda_c^u| < |\lambda_1^u|$ whenever $-1 \leq c <1$.
\item The function $c \mapsto \lambda_c^u$ is differentiable at $c=1$ and the constant
$\beta=\frac{1}{\lambda_1^u} \big[\frac{d}{dc} \lambda_c^u\big]_{c=1}$
is positive.
\end{enumerate}
\end{lemma}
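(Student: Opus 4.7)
The plan is to reduce both parts to an analysis of the trace polynomial $\tau_c = \operatorname{tr}\bigl(\rho_c(g)\bigr)$. Since each generator of $\mathcal{G}$ has determinant $\pm 1$, the sign $\delta = \det\rho_c(g)\in\{\pm 1\}$ is independent of $c$, and the characteristic polynomial $\lambda^2-\tau_c\lambda+\delta=0$ gives
$$|\lambda_c^u|=\tfrac{1}{2}\bigl(|\tau_c|+\sqrt{\tau_c^2-4\delta}\bigr)$$
whenever $\rho_c(g)$ is hyperbolic or a glide reflection, and $|\lambda_c^u|=1$ otherwise. Thus $|\lambda_c^u|$ is strictly monotone in $|\tau_c|$ outside the elliptic/parabolic/reflection locus, and implicit differentiation of the characteristic polynomial at $c=1$ yields
$$\beta \;=\; \frac{\tau_1'}{\lambda_1^u-\lambda_1^s}, \qquad \tau_1' := \bigl[\tfrac{d}{dc}\tau_c\bigr]_{c=1}.$$
A short sign check, using $|\lambda_1^u|>1>|\lambda_1^s|$, shows $\operatorname{sgn}(\lambda_1^u-\lambda_1^s)=\operatorname{sgn}(\lambda_1^u)=\operatorname{sgn}(\tau_1)$, so $\beta>0$ is equivalent to $\tau_1\tau_1'>0$, i.e.\ to $\tau_c^2$ being strictly increasing at $c=1$.

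For part (1), I interpret $\rho_c$ for $c\in[-1,1]$ as a representation into $\operatorname{Isom}(\H^2)$: the matrices act as reflections in three geodesics $L_A=\{x=0\}$, $L_B=\{x=1\}$, and $L_{C_c}$, the half-circle joining $1$ to $(c-1)/(c+1)$ on $\partial\H^2$. The pairs $(L_A,L_B)$ and $(L_B,L_{C_c})$ are always tangent at $\infty$ and $(1,0)$ respectively, while $L_A\cap L_{C_c}$ is a point of $\H^2$ for $-1<c<1$ with interior angle $\cos^{-1}(c)$, becomes tangent at $(0,0)$ when $c=1$, and is empty (ultraparallel) when $c>1$. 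If $\rho_c(g)$ is elliptic, parabolic, or a reflection then $|\lambda_c^u|=1<|\lambda_1^u|$ is immediate; otherwise $\rho_c(g)$ is hyperbolic or glide-reflective with translation length $2\log|\lambda_c^u|$, and the required inequality is a strict monotonicity statement for that length. The proof proceeds by a length-angle argument in the spirit of Schl\"afli's formula: as $c$ increases through $[-1,1]$, the cone angle $2\cos^{-1}(c)$ at the finite vertex of the triangle bounded by $L_A,L_B,L_{C_c}$ decreases monotonically from $2\pi$ down to $0$, and this deformation strictly increases the translation length of every hyperbolic conjugacy class in $\rho_c(\mathcal{G})$.

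Part (2) then follows from part (1) plus an extension across $c=1$. Differentiability at $c=1$ is automatic from the implicit function theorem applied to the characteristic polynomial, since $\lambda_1^u$ is a simple root. Part (1) yields $\tau_c^2\le\tau_1^2$ on $[-1,1]$, so $\tau_1\tau_1'\ge 0$. To exclude the degenerate case $\tau_1'=0$, I would carry the same monotonicity argument across $c=1$: for $c>1$ slightly, $L_A$ and $L_{C_c}$ become ultraparallel with positive, $c$-increasing common-perpendicular length, and the length-derivative principle extended through the ideal case at $c=1$ shows that translation lengths of hyperbolic elements continue to strictly increase past $c=1$. Hence $\tau_c^2$ is strictly increasing through $c=1$, which forces $\tau_1'\ne 0$ and $\beta>0$. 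The main obstacle is making the cone-angle/translation-length monotonicity precise in this setting, since $\rho_c(\mathcal{G})$ need not be discrete for $c\in(-1,1)$ and the classical cone-manifold derivative formulas do not apply directly. A purely algebraic fallback is to exploit the rank-one factorization $C_c=C_1+(c-1)\bu\bv^T$ with $\bu=(-1,-1)^T$, $\bv=(1,-1)^T$ and $\bv^T\bu=0$, which gives the explicit formula
$$\tau_1' \;=\; \sum_{j=1}^{k}\bv^T N_j\bu,$$
where $k$ is the number of copies of the third generator in $g$ and each $N_j$ is the cyclic rearrangement of $\rho_1(g)$ obtained by omitting the $j$-th copy of $C_1$; the sign of this sum can then be controlled using the interpretation of $\bu$ as the boundary fixed point $1\in\partial\H^2$ of $C_1$ and $\bv^T$ as the infinitesimal direction along which $L_{C_c}$ slides.
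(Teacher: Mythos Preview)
Your reduction to the trace polynomial and the translation-length interpretation is the right framework, and it is essentially the same viewpoint the paper adopts. But the proposal has a genuine gap at the step you yourself flag: the monotonicity of translation length in $c$ is asserted ``in the spirit of Schl\"afli's formula'' without a mechanism that works when $\rho_c(\mathcal{G})$ is not discrete. Schl\"afli-type cone-manifold formulas require a well-defined hyperbolic cone structure on a quotient, and for generic $c\in(-1,1)$ there is none; you acknowledge this but do not resolve it. The algebraic fallback is likewise incomplete: the formula $\tau_1'=\sum_j \bv^T N_j \bu$ is correct, but ``the sign of this sum can then be controlled'' is not a proof, and in fact controlling that sign uniformly over all hyperbolic words is exactly the hard part.

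The paper closes this gap with two concrete devices you are missing. First, it proves a billiard-unfolding inequality (Lemma~\ref{lem:inf}): for any triangle $\Delta$ and any $g\in\mathcal{G}'$, $2\log|\lambda^u|$ is bounded above by the infimum of lengths of closed curves in $\H^2$ that visit the reflecting lines in the cyclic order dictated by $g$. This inequality needs no discreteness. Second, it realizes the deformation $\Delta_1\to\Delta_c$ in the Klein model as an explicit linear map $M_c(x,y)=\bigl(x,y\sqrt{(1+c)/2}\bigr)$ and checks, via the cross-ratio distance formula, that $M_c$ strictly shortens every segment not lying on $\{y=0\}$. Since at $c=1$ the group is discrete and the axis of $\rho_1(g)$ projects to a closed billiard path $\gamma_1$ realizing the infimum, one gets $2\log|\lambda_c^u|\le \mathrm{length}\bigl(M_c(\gamma_1)\bigr)<\mathrm{length}(\gamma_1)=2\log|\lambda_1^u|$. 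For part~(2), the paper differentiates the Klein metric tensor along $M_c$ at $c=1$ to obtain a first-order length decrease, and the same inequality chain forces $\beta>0$. This replaces your appeal to a Schl\"afli-type principle with an explicit contraction estimate that is robust against non-discreteness.
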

We remark that the constant $\beta$ appeared in Theorems \ref{thm:mixing} and \ref{thm:geometric}.

The remainder of the section is devoted to a proof of the Lemma and the following consequence
which involves the operator norm $\| \cdot \|$ on real $2 \times 2$ matrices 
defined using the Euclidean metric on $\R^2$.

\begin{proposition}
\label{prop:eigenvalue}
Let $g \in \sG$ be as in Lemma \ref{lem:eigenvalue}. 
Let $c_0 \in \R$ be a number so that $-1 \leq c_0 < 1$. Then there is a constant $\xi \in \R$ with $1<\xi<\lambda_1^u$
so that $\|\xi^{-n} \rho_c(g)^n\|$ converges to zero uniformly for $c \in [-1, c_0]$.
\end{proposition}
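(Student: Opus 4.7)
The plan is to convert the eigenvalue estimate from Lemma \ref{lem:eigenvalue}(1) into a uniform operator-norm estimate via an elementary $2\times 2$ Cayley--Hamilton argument. First, since the entries of $\rho_c(g)$ are polynomial in $c$, the trace $t_c=\operatorname{tr}\rho_c(g)$ is polynomial in $c$ and the determinant $d_c=\det\rho_c(g)\in\{\pm 1\}$ is constant in $c$. The eigenvalues of $\rho_c(g)$ are the roots of $x^2-t_c x+d_c$, so $|\lambda_c^u|$ depends continuously on $c$. Compactness of $[-1,c_0]$ lets me set $M=\max_{c\in[-1,c_0]}|\lambda_c^u|$, and by Lemma \ref{lem:eigenvalue}(1) we have $M<|\lambda_1^u|$. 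I then choose any $\xi\in(M,|\lambda_1^u|)$, reducing the proposition to showing $\|\rho_c(g)^n\|$ grows no faster than a polynomial in $n$ times $M^n$, uniformly in $c\in[-1,c_0]$.

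The key tool is Sylvester's formula: for any $2\times 2$ matrix $N$ with eigenvalues $\lambda,\mu$,
$$N^n=\frac{\lambda^n-\mu^n}{\lambda-\mu}\,N-\lambda\mu\cdot\frac{\lambda^{n-1}-\mu^{n-1}}{\lambda-\mu}\,I,$$
where the case $\lambda=\mu$ is handled by interpreting the quotients as their limits $n\lambda^{n-1}$ and $(n-1)\lambda^{n-2}$; this is immediate by induction from the Cayley--Hamilton relation $N^2=(\lambda+\mu)N-\lambda\mu\,I$. Writing each quotient as a sum of geometric monomials and assuming $|\lambda|\geq|\mu|$ gives $\|N^n\|\leq n|\lambda|^{n-1}\|N\|+(n-1)|\lambda\mu|\,|\lambda|^{n-2}$. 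For $N=\rho_c(g)$, the relation $|\lambda_c^u\,\lambda_c^s|=|d_c|=1$ forces $|\lambda_c^u|\geq 1$, so the bound simplifies to
$$\|\rho_c(g)^n\|\leq n\,|\lambda_c^u|^{n-1}\bigl(\|\rho_c(g)\|+1\bigr).$$

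Since the entries of $\rho_c(g)$ are polynomials in $c$, the norm $\|\rho_c(g)\|$ is bounded by some constant $K$ uniformly for $c\in[-1,c_0]$, and substituting $|\lambda_c^u|\leq M$ yields
$$\|\xi^{-n}\rho_c(g)^n\|\leq(K+1)\,n\,\xi^{-n}M^{n-1}=\frac{K+1}{M}\,n\!\left(\frac{M}{\xi}\right)^{\!n},$$
whose right-hand side tends to zero as $n\to\infty$ uniformly in $c\in[-1,c_0]$ because $M/\xi<1$. The only delicate point I foresee is bookkeeping in the Sylvester identity and its limiting form at $\lambda=\mu$; once that is in hand the proposition follows from the continuity and compactness arguments above, with all of the real content already packaged into the eigenvalue bound of Lemma \ref{lem:eigenvalue}(1).
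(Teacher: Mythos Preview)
Your argument is correct and is in fact cleaner than the paper's. Both proofs begin identically: continuity of $c\mapsto|\lambda_c^u|$ and compactness of $[-1,c_0]$ give $M=\max_{c\in[-1,c_0]}|\lambda_c^u|<|\lambda_1^u|$ by Lemma~\ref{lem:eigenvalue}(1), and one then chooses $\xi\in(M,|\lambda_1^u|)$. From here the paper splits into cases: it first isolates the finitely many $c$ where $\rho_c(g)$ is parabolic (trace $\pm 2$), covers each by a small interval on which an ad~hoc estimate holds, and on the complement produces a continuous diagonalization $\rho_c(g)^n=(\lambda_c^u)^nP_c^u+(\lambda_c^s)^nP_c^s$ with uniformly bounded projection matrices. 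Your Cayley--Hamilton/Sylvester identity sidesteps all of this by giving the single inequality $\|\rho_c(g)^n\|\le n|\lambda_c^u|^{n-1}(\|\rho_c(g)\|+1)$ valid for every $c$, diagonalizable or not; the linear factor $n$ absorbs the parabolic case automatically. The paper's approach has the mild advantage that the diagonalization \eqref{eq:matrix splitting} is reused verbatim later in the proof of Theorem~\ref{thm:parabolic intersections}, but for the proposition itself your route is shorter and avoids the case analysis entirely.
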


The group $\PGL(2,\R)$ is the isometry group of the hyperbolic plane. 
For a hyperbolic $G \in \PGL(2,\R)$ the eigenvalue $\lambda^u$ of largest absolute value has the geometric significance:
\begin{equation}
\label{eq:dist}
\inf_{x \in \H^2} \textit{dist}(x, Gx)=2 \log |\lambda^u|.
\end{equation}
Moreover, this infimum is achieved. The collection of points where this infimum is achieved is a geodesic
in $\H^2$ called the {\em axis} of the hyperbolic isometry $G$. The axis has a canonical orientation
determined by the direction $G$ translates the geodesic. 
If $G$ belongs to a discrete group $\Gamma$,
then this axis projects to a curve of length $2 \log |\lambda^u|$ in the quotient $\H^2/\Gamma$.

We will need to extend these ideas to triangular billiard tables which are not quotients of $\H^2$ by 
a discrete group. Let $\Delta$ be any triangle in $\H^2$ (with the case of interest being the triangles shown in Figure \ref{fig:deformation}).
We label the edges by the $\{A,B,C\}$ and we will use $\ell_a$, $\ell_b$, and $\ell_c$ to denote the bi-infinite geodesics in $\H^2$ that contain
the marked edges. Let $\sG' \cong C_2 \ast C_2 \ast C_2$ represent $\sG$ modulo the central $C_2$ in $\sG$.
Given $\Delta$ we get a representation $\rho_\Delta:\sG' \to \PGL(2, \R)$ where we send the generators
$g_A, g_B, g_C \in \sG'$ to the reflections in $\ell_a$, $\ell_b$, and $\ell_c$ respectively.

Each conjugacy class $[g]$ in $\sG'$ has a representative of the form
\begin{equation}
\label{eq:reflection product}
g = g_{L_0} g_{L_1} \ldots g_{L_{n-1}} \quad \text{with each $L_i \in \{A,B,C\}$}
\end{equation}
where $L_i \neq L_{i+1}$ for any $i \in \{0,\ldots,n-1\}$ and addition taken modulo $n$.
Moreover this representative is unique up to cyclic permutations. We define the {\em orbit-class} $\Omega([g])$ to be the collection of all closed rectifiable curves in $\H^2$ that visit the lines
$\ell_{L_0}$, $\ell_{L_1}$, \ldots, $\ell_{L_{n-1}}$  in that order. We define the {\em orbit-length} of $[g]$ to be
$\sL([g])=\inf_{\gamma \in \Omega([g])} \textit{length}(\gamma)$.

We have the following lemma.

\begin{lemma}
\label{lem:inf}
Fix $\Delta$ and notation as above. Let $g \in \sG'$ and let $[g]$ be its conjugacy class. 
Let $\lambda^u$ denote the eigenvalue of $\rho_\Delta(g)$ with largest absolute value.
Then $2 \log |\lambda^u|\leq \sL([g])$.
\end{lemma}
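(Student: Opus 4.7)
The plan is a reflection-unfolding argument that reduces the orbit-length bound to the translation-distance formula \eqref{eq:dist}. First I would dispose of the degenerate case: $\rho_\Delta(g)$ lifts to $\SL^\pm(2,\R)$, so its two eigenvalues multiply to $\pm 1$. If $|\lambda^u| > 1$ then the other eigenvalue has absolute value less than one, so $\rho_\Delta(g)$ is hyperbolic and \eqref{eq:dist} applies; if instead $|\lambda^u| \le 1$ (forcing $|\lambda^u|=1$), the bound $2\log|\lambda^u| = 0 \le \sL([g])$ is immediate. So I may assume $\rho_\Delta(g)$ is hyperbolic.

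Next I would fix any $\gamma \in \Omega([g])$ of finite length (curves of infinite length satisfy the bound trivially), with a parametrization $\gamma:[0,1] \to \H^2$, $\gamma(0) = \gamma(1)$, and parameters $0 \leq t_0 < t_1 < \cdots < t_{n-1} < 1$ so that $p_i := \gamma(t_i) \in \ell_{L_i}$. Write $\alpha := \gamma|_{[0,t_0]}$ for the initial piece from $\gamma(0)$ to $p_0$, and $\gamma_i$ for the piece of $\gamma$ from $p_i$ to $p_{i+1}$, where $p_n := \gamma(1) = \gamma(0)$. Setting $r_i := \rho_\Delta(g_{L_i})$, $h_0 := I$, and $h_{i+1} := h_i\, r_i$, one obtains $h_n = \rho_\Delta(g)$. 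The key step is to form the unfolded curve
\[
\tilde\gamma \;:=\; \alpha \,\cup\, h_1(\gamma_0) \,\cup\, h_2(\gamma_1) \,\cup\, \cdots \,\cup\, h_n(\gamma_{n-1}).
\]

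Continuity at the junction between $h_i(\gamma_{i-1})$ and $h_{i+1}(\gamma_i)$ amounts to the identity $h_i(p_i) = h_{i+1}(p_i)$, i.e., $r_i(p_i) = p_i$, which holds precisely because $p_i \in \ell_{L_i}$ is fixed by the reflection $r_i$; this is the only place the prescribed visiting order of the curves in $\Omega([g])$ is used. Since each $h_j$ is an isometry, $\tilde\gamma$ has the same length as $\gamma$, while $\tilde\gamma$ runs from $\gamma(0)$ to $h_n(\gamma(0)) = \rho_\Delta(g)\bigl(\gamma(0)\bigr)$. Therefore
\[
\textit{length}(\gamma) \;=\; \textit{length}(\tilde\gamma) \;\geq\; \dist\!\bigl(\gamma(0),\, \rho_\Delta(g)(\gamma(0))\bigr) \;\geq\; 2\log|\lambda^u|
\]
by \eqref{eq:dist}, and taking the infimum over $\gamma \in \Omega([g])$ completes the proof. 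The main obstacle is the bookkeeping in setting up the unfolding so that all the index shifts align correctly; once the continuity check at each $p_i$ is reduced to the fixed-line property of $r_i$, the equality of lengths follows immediately from the fact that the $h_j$ are isometries, and the bound falls out of \eqref{eq:dist}.
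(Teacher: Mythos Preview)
Your argument is correct and is essentially the same unfolding-by-reflections argument as the paper's: both build a path from a point $x$ to $\rho_\Delta(g)(x)$ by applying the successive partial products $h_i$ to the arcs between consecutive wall visits, check continuity via $r_i(p_i)=p_i$, and then invoke \eqref{eq:dist}. The only cosmetic differences are that you argue directly rather than by contradiction and allow an initial arc $\alpha$ instead of assuming the curve starts on $\ell_{L_0}$.
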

The proof is essentially the ``unfolding'' construction from polygonal billiards:

\begin{proof}
The statement is certainly true unless $|\lambda^u|>1$. Since eigenvalues are a conjugacy invariant, we may assume that
$g$ is given as in \eqref{eq:reflection product}. If the conclusion is false, 
there is a $\gamma \in \Omega(G)$
with length less than $2 \log |\lambda^u|$. We will draw a contradiction by 
construct a new path $\gamma'$ in
$\H^2$ with length equal to that of $\gamma$ such that $\rho_\Delta(g)$ translates the starting point of $\gamma'$
to the ending point of $\gamma'$ violating \eqref{eq:dist}.

We may assume $\gamma$ begins on the geodesic $\ell_{L_0}$ and then travels to $\ell_{L_1}$ and so on. 
For $i \in \{0, \ldots, n-1\}$, let $\gamma_i$
be the portion of $\gamma$ which travels from $\ell_{L_{i-1}}$ to $\ell_{L_{i}}$ with subscripts taken modulo $n$ so that
$\gamma=\gamma_1 \cup \ldots \cup \gamma_{n-1} \cup \gamma_0$.
Define $h_0, \ldots, h_{n} \in \sG'$ inductively so that
$h_0$ is the identity element of $\sG'$ and $h_{i+1}=h(i) g_{L_i} $. This makes $h_n=g$ as given in \eqref{eq:reflection product}.
For $i \in \{0, \ldots, n-1\}$ set $\gamma'_i = \rho_\Delta(h_i)(\gamma_i)$ and set $\gamma'_n=\rho_\Delta(h_n)(\gamma_0)$. We observe
\begin{enumerate}
\item The collection $\gamma'_0 \cup \ldots \cup \gamma'_n$ is a continuous curve.
\item We have $\gamma'_n=\rho_\Delta(h_n)(\gamma'_0)$. 
\end{enumerate}
Statement (2) holds by construction since $\gamma_0=\gamma'_0$ (as $h_0$ is the identity).
To see statement (1) observe that $\gamma_i$ terminates at $\ell_{L_i}$ at the same point at which $\gamma_{i+1}$ starts.
Thus, $\gamma_i \cup \rho_\Delta(g_{L_i})(\gamma_{i+1})$ passes continuously through $\ell_{L_i}$. It follows
that the arcs $\gamma'_i=\rho_\Delta(h_i)(\gamma_i)$ and $\gamma'_{i+1} = \rho_\Delta(h_i g_{L_i})(\gamma_{i+1})$ are joined at some point on $\rho_\Delta(h_i)(\ell_{L_i})$. 
This proves (1).

From (1) we see that a point on $\gamma'_0$ is joined to the corresponding point on $\gamma'_n$ by a path of length equal to the length of $\gamma$,
and from (2) we see that these points differ by an application of the hyperbolic isometry  $\rho_\Delta(h_n)$. Recalling that $h_n=g$, we see this violates \eqref{eq:dist}.
\end{proof}

We will need to introduce one more idea. The {\em Klein model} for the hyperbolic plane consists of 
$\KH^2=\{(x^2,y^2) \in \R^2 ~|~x^2+y^2<1\}$. The boundary 
$\del \KH^2=\{(x^2,y^2) \in \R^2 ~|~x^2+y^2=1\}$. Geodesics in the Klein model
are Euclidean line segments.
Distance between points in the Klein model may be computed
in two ways. Let $P_1$ and $P_2$ be two points in $\KH^2$, and let $\overline{P_1 P_2}$ be the Euclidean
line through them. Let $Q_1$ and $Q_2$ be the two points of $\del \KH^2 \cap \overline{P_1 P_2}$, chosen so that
$P_1$ is closest in the Euclidean metric to $Q_1$. Then the distance between $P_1$ and $P_2$ is given
by the logarithm of the cross ratio,
\begin{equation}
\label{eq:cr}
\textit{dist}_{\KH^2} (P_1,P_2)=\log \Big(\frac{\textit{dist}_{\R^2}(P_1,P_2) \textit{dist}_{\R^2}(Q_1,Q_2)}
{\textit{dist}_{\R^2}(P_1,Q_1) \textit{dist}_{\R^2}(P_2,Q_2)}\Big).
\end{equation}
Alternately, we can compute distance by using the metric tensor $ds$. 
\begin{equation}
\label{eq:metric_tensor}
ds=\sqrt{\frac{dx^2+dy^2}{1-x^2-y^2}+\frac{(x dx+y dy)^2}{(1-x^2-y^2)^2}}.
\end{equation}
The distance between two points can be computed by integrating the metric tensor over the geodesic path
between them. See \cite{CFKP} or any hyperbolic geometry text for more details.

As discussed above the projectivization of $\rho_c(\sG)$ to a subgroup of $\PGL(2,\R)$ is generated by the reflections 
in the sides of a triangle $\Delta_c$ in $\H^2$. See Figure \ref{fig:deformation}.
When $c=\cos \theta \leq 1$, the triangle has two ideal vertices and 
one vertex with angle $\theta$.  
For our purposes, we will think of $\Delta_c \subset \KH^2$. We define
\begin{equation}
\Delta_c=\textit{Convex Hull}(\{P_1,P_2,P_3\}) \subset  \KH^2
\end{equation}
where $P_1=(-1,0)$, $P_2=(1,0)$, and $P_3=(0,\frac{\sqrt{1+c}}{\sqrt{2}})$. 
It is an elementary check that this triangle is isometric to the triangle used to define the reflection group $\rho_c(\sG)$.
The reflection lines of the elements 
$A_c$, $B_c$, and $C_c$ are given
by $\ell_A=\overline{P_3 P_1}$, $\ell_B=\overline{P_1 P_2}$, and $\ell_C = \overline{P_2 P_3}$ respectively.

Fixing $c$ observe that $\rho_c: \sG \to \SL^\pm(2,\R)$ projectivizes to a representation
$\rho_c': \sG' \to \PGL(2,\R)$.
The generators $g_A, g_B, g_C \in \sG'$ are mapped under $\rho_c'$ respectively to 
$A_c$, $B_c$ and $C_c$ of \eqref{eq:involutions} viewed as elements of $\PGL(2,\R)$.
Then $\ell_A$, $\ell_B$, and $\ell_C$ are the fixed point sets of these hyperbolic reflections.

\begin{proof}[Proof of Lemma \ref{lem:eigenvalue}, statement (1)]
Choose $g \in \sG$ so that $\rho_1(g)$ has a real eigenvalue $\lambda_1^u$ with $|\lambda^u_1|>1$.
Let $G_1=\rho_1(g)$ which we think of as a hyperbolic transformation of $\H^2$. In the case of $c=1$, the group
$\rho_1'(\sG')$ is discrete and $\Delta_1$ is a fundamental domain for the action.
Thus, the projection of the axis of $G_1$ to
$\H^2/\rho_1(\sG)$ minimizes length in the {\em orbit-class} $\Omega(g)$.
In particular, this billiard path $\gamma_1$ realizes the infimum discussed in Lemma \ref{lem:inf}.

Let $|\lambda^u_c|$ denote the greatest absolute value of an eigenvalue of $G_c = \rho_c(g)$. 
We will use Lemma \ref{lem:inf} to show that $|\lambda^u_c|<|\lambda_1^u|$ for all $-1 \leq c < 1$. 
In coordinates on the closure of $\KH^2$ consider the linear map
\begin{equation}
\label{eq:mc}
M_c~:~\overline{\KH^2} \to \overline{\KH^2}~:~(x,y)\mapsto\left(x,\frac{y\sqrt{1+c}}{\sqrt{2}}\right).
\end{equation}
Observe from our formula for the vertices that $M_c(\Delta_1)=\Delta_c$. We claim that $M_c$ shortens every line segment in $\Delta_1$ except line segments contained in the side
$\overline{P_1 P_2}$. Consider a segment $\overline{XY}$ with finite length in $\Delta_1$. Let $\overline{PQ}$ be
the geodesic containing $\overline{XY}$, so that $P,Q \in \del \KH^2$ with $P$ the closest to $X$ as
in the left side of Figure \ref{fig:trimap}. Then by \eqref{eq:cr},
$$\begin{array}{rcl}
\textit{dist}_{\KH^2} (X,Y)& =& {\displaystyle \log \Big(\frac{\textit{dist}_{\R^2}(X,Y) \textit{dist}_{\R^2}(P,Q)}
{\textit{dist}_{\R^2}(X,P) \textit{dist}_{\R^2}(Y,Q)}\Big)} \\
& = &{\displaystyle  \log \Big(\frac{\textit{dist}_{\R^2}(M_c(X),M_c(Y)) \textit{dist}_{\R^2}(M_c(P),M_c(Q))}
{\textit{dist}_{\R^2}(M_c(X),M_c(P)) \textit{dist}_{\R^2}(M_c(Y),M_c(Q))}\Big).}
\end{array}
$$
And, let $P',Q' \in \del \KH^2$ be the points where the geodesic $\overline{M_c(P) M_c(Q)}$ intersects 
the boundary. Then,
$$\textit{dist}_{\KH^2} (M_c(X),M_c(Y))=\log \Big(\frac{\textit{dist}_{\R^2}(M_c(X),M_c(Y)) \textit{dist}_{\R^2}(P',Q')}
{\textit{dist}_{\R^2}(M_c(X),P') \textit{dist}_{\R^2}(M_c(Y),Q')}\Big).$$
It is a standard computation that $\textit{dist}_{\KH^2} (X,Y)>\textit{dist}_{\KH^2} (M_c(X),M_c(Y))$
so long as the line segment $\overline{P' Q'}$ strictly contains the line segment $\overline{M_c(P) M_c(Q)}$, i.e.,
as long as $M_c(P) \neq P'$ or $M_c(Q) \neq Q'$. In particular, the only time this inequality 
could be false is when $PQ \subset \{(x,y)|y=0\}$. Our claim is proved.

\begin{figure}[t]
\begin{center}
\includegraphics[width=3.5in]{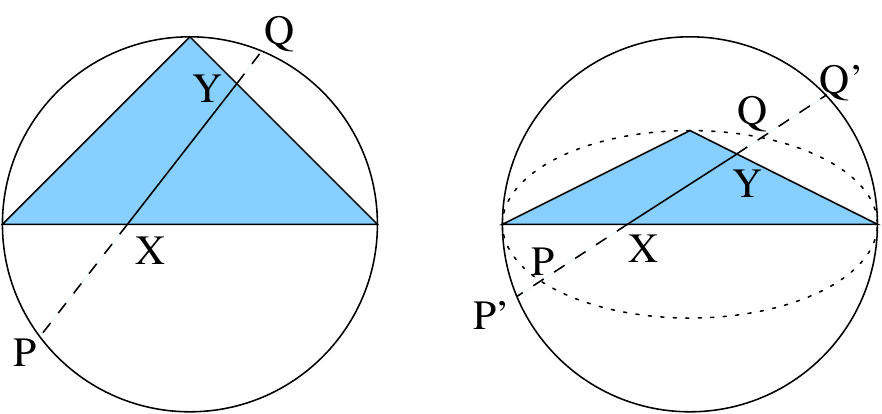}
\caption{A segment and its image under $M_c$. }
\label{fig:trimap}
\end{center}
\end{figure}

To finish the proof, we apply the claim to the billiard path $\gamma_1$ constructed in the first paragraph. No finite length billiard path can have a segment contained in the line $y=0$, therefore 
$$\textit{length}\big( M_c(\gamma_1)\big) < \textit{length}\big( \gamma_1 \big).$$
whenever $-1 \leq c < 1$. Then by Lemma \ref{lem:inf}, for all such $c$, 
$$2 \log |\lambda_c^u| \leq \textit{length}\big( M_c(\gamma_1)\big) < \textit{length}\big( \gamma_1 \big) = 2 \log |\lambda_1^u|.$$
Thus $|\lambda_c^u|<|\lambda_1^u|$. 
\end{proof}

\begin{proof}[Proof of Lemma \ref{lem:eigenvalue}, statement (2)]
Let $g \in \sG$ be chosen so that $\rho_1(g)$ has an eigenvalue $\lambda_1^u$ so that $|\lambda_1^u|>1$.
By possibly replacing $\rho_1(g)$ with $-I \rho_1(g)$ we may assume $\lambda_1^u>1$.
Let $\lambda_c^u$ be the largest eigenvalue of $G_c=\rho_c(g)$, which by continuity of $c \mapsto G_c$
satisfies $\lambda_c^u>1$ in a neighborhood of $1$. We must show
$$\frac{d}{dc} \lambda^u_c\Big|_{c=1}>0.$$
The quantity $\frac{d}{dc} \lambda^u_c$ exists at $c=1$, since entries of $G_c$ vary polynomially in $c$. Thus we can afford
to just look for a one-sided derivative and we will
restrict attention to the case of $c<1$ so that we can make use of the map $M_c: \Delta_1 \to \Delta_c$ of
\eqref{eq:mc}.

Let $m=\frac{\sqrt{1+c}}{\sqrt{2}}$. We have $\frac{d}{dc} m=\frac{1}{2 \sqrt{2+2c}}$,
and $\frac{d}{dc} m|_{c=1}=\frac{1}{4}$.
By \eqref{eq:metric_tensor},
the $\KH^2$ length of the tangent vector ${\bf i}=(1,0)$ at the point $(x,y) \in \KH$ is given
by 
$$I_1=\frac{\sqrt{1-y^2}}{1-x^2-y^2}.$$
The length of the tangent vector $M_c({\bf i})=(1,0)$ at the point $M_c(x,y)=(x,my)$ is given by
$$I_2=\frac{\sqrt{1-m^2 y^2}}{1-x^2-m^2y^2}.$$
We compute 
$$\frac{d}{d c} \frac{I_2}{I_1} \Big|_{c=1}=\frac{y^2(1+x^2-y^2)}{4(1-y^2)(1-x^2-y^2)}$$
which is positive on all of $\KH^2$ other than those points where $y=0$.
Note, we are perturbing $c$ in the negative direction. This says that off the line $y=0$, 
$M_c$ is compressing every horizontal vector enough to be detected
by the first derivative.
Let $J_1$ be the $\KH^2$ length of the tangent vector ${\bf j}=(0,1)$ at the point $(x,y) \in \KH$ and
$J_2$ be the $\KH^2$ length of the vector $M_c({\bf j})=(0,m)$ at the point $M_c(x,y)=(x,my)$.
We have
$$J_1=\frac{\sqrt{1-x^2}}{1-x^2-y^2}
\quad \textrm{and} \quad
J_2=\frac{m\sqrt{1-x^2}}{1-x^2-m^2 y^2}.$$
We compute 
$$\frac{d}{d c} \frac{J_2}{J_1}\Big|_{c=1}=\frac{1-x^2+y^2}{4(1-x^2-y^2)}>0.$$
In this case, $M_c$ is compressing every vertical vector enough to be detected
by the first derivative.

The argument concludes in the same manner as the previous proof. Let $\gamma_1$ be the 
billiard path on $\Delta_1$ corresponding to $G_1$. The argument above tells us that
$\frac{d}{dc} \textit{length}(M_c(\gamma_1))=k>0.$ But for $c<1$, 
$$2 \log \lambda_c^u \leq \textit{length}(M_c(\gamma_1))=\textit{length}(\gamma_1)-k(1-c)+\textit{higher order terms}.$$
By taking a straight forward derivative, we get 
$$2 \log \lambda_c^u=2 \log \lambda_1^u-(1-c)\frac{2 \frac{d}{dc} \lambda_c^u |_{c=1}}{\lambda_1^u}+\textit{higher order terms}.$$
This pair of equations imply
$$\frac{d}{dc} \lambda_c^u |_{c=1} \geq \frac{k \lambda_1^u}{2}>0.$$
\end{proof}

\begin{proof}[Proof of Proposition \ref{prop:eigenvalue}]
Fix a $c_0<1$. The function $c \mapsto |\lambda_c^u|$ is continuous so attains its maximum
in $[-1,c_0]$.  By (1) of Lemma \ref{lem:eigenvalue} we know this maximum is less than $\lambda_1^u$, so we can select $\xi$ so
that $|\lambda_c^u|<\xi<\lambda_1^u$ for all $c \in [-1,c_0]$. We will show that $\xi^{-n} \rho_c(g)^n$ tends to the zero matrix uniformly in the operator norm. 

First observe that it suffices to find an $N' \in \N$ so that $n \geq N'$ implies $\|\rho_c(g)^n\|<\xi^n$
for $c \in [-1,c_0]$.
Suppose this statement is true and fix an $\epsilon>0$. We will find an $N$ so that $n>N$ implies that the operator norm $\|\xi^{-n} \rho_c(g)^n\|<\epsilon$. By continuity of $c \mapsto \|\rho_c(g)^{N'}\|$ we can set
$$
\begin{array}{rcll}
\xi' & = & \sup~\{\|\rho_c(g)^{N'} \|^{\frac{1}{N'}}:~c \in [-1,c_0]\}<\xi & \text{and} \\
M & = & \sup~\big\{\xi^{-n} \|\rho_c(g)^n\|:~\text{$0 \leq n < N'$ and $c \in [-1,c_0]$}\big\}.
\end{array}$$
Then choose $K$ sufficiently large so that 
$$k>K \quad \text{implies} \quad M  \left(\frac{\xi'}{\xi}\right)^{-k N'}  < \epsilon.$$
Any $n>(K+1)N'$ can be written in the form $n=k N'+n'$ for some $k>K$ and some $n'$ satisfying 
$0 \leq n' < N'$ so that
 by submultiplicativity of the operator norm we have
$$\xi^{-n} \|\rho_c(g)^n\| \leq \xi^{-n'} \|\rho_c(g)^{n'}\| \cdot \left(\xi^{-N'} \|\rho_c(g)^{N'}\|\right)^k
\leq M \left(\frac{\xi'}{\xi}\right)^{-k N'} < \epsilon.$$

The main idea is to continuously diagonalize the matrix action of $\rho_c(g)^n$. However there is a minor
difficulty that for some values of $c$ the matrix might not be diagonalizable.
The matrices $\rho_c(g)$
have constant determinant $\pm 1$ and so the eigenvalues are distinct except in the case that the determinant is one and the trace is $\pm 2$. 
There are only finitely many $c$ at which the trace of $\rho_c(g)$ is $\pm 2$ since the trace is polynomial in $c$ and 
is not constant (by statement (2) of Lemma \ref{lem:eigenvalue}).

Let $c_1, \ldots, c_k$ denote the finitely many values of $c$ in $[-1,c_0]$ for which $\rho_{c}(g)$ 
has trace $\pm 2$. Observe that if $P \in \SL(2, \R)$ is a any matrix with determinant one and trace $\pm 2$, then
the operator norm $\|P^n\|$ grows asymptotically linearly. We conclude that for each $i \in \{1, \ldots, k\}$ there is a $N_i$ so that $n>N_i$ implies that $\|\rho_{c_i}(g)^n\| < \xi^{n}$. Then by continuity and submultiplicativity of the operator norm (similar to the argument above), we can find an $r_i>0$ and a $N_i'>N_i$ so that 
\begin{equation}
\label{eq:parabolic bound 1}
\|\rho_{c}(g)^n\| < \xi^n
\quad \text{when $n>N_i'$ and $|c-c_i| \leq r_i$.}
\end{equation}

It remains to consider the complement $[-1,c_0] \smallsetminus \bigcup_{i=1}^k (c_i-r_i, c_i+r_i)$. Let
$J$ be the closure of this complement which is a finite union of closed intervals.
On each $J$ the matrix $\rho_c(g)$ is continuously diagonalizable. That is, we have a continuous
functions $\v^u, \v^s: J \to \C^2$ and continuous $\lambda^u,\lambda^s:J \to \C$ with $\lambda^u_c \neq \lambda^s_c$ for all $c \in J$ so that
$$\rho_c(g)(\v^u_c)=\lambda^u_c \v^u_c \quad \text{and} \quad
\rho_c(g)(\v^s_c)=\lambda^s_c \v^s_c \quad \text{for $c \in J$}.$$
(Note that at this point $u$ and $s$ are just being used to distinguish eigenvectors and do not necessarily correspond to expanding and contracting directions.)
Let $P_c^u$ be the projection matrix so that $\v^u$ is an eigenvector with eigenvalue $1$ and 
so that $\v^s$ is an eigenvector with eigenvalue zero. Let $P_c^s$ be the matrix with the $\v^u$
and $\v^s$ playing opposite roles. Then $P_c^s$ and $P_c^u$ vary continuously for $c \in J$ and we have
\begin{equation}
\label{eq:matrix splitting}
\rho_c(g)^n = (\lambda_c^u)^n P_c^u + (\lambda_c^s)^n P_c^s \quad \text{for all $n \in \N$ and $c \in J$}.
\end{equation}
Letting $M$ denote the supremum of the operator norms of all matrices of the form $P_c^u$ and
$P_c^s$ with $c \in [-1,c_0]$ we see that
$$\|\rho_c(g)^n\| \leq M (|\lambda_c^u|^n + |\lambda_c^s|^n) \leq 2 M (\xi')^n$$
where $\xi'$ is the supremum of $|\lambda_c^u|$ and $|\lambda_c^s|$ over $c \in J$. 
Then by definition of $\xi$ we have $\xi'<\xi$ and thus $2 M (\frac{\xi'}{\xi})^n$ tends exponentially to zero. In particular, there is a $N_J$ so that $n>N$ implies that $\|\rho_c(g)^n\|<\xi^n$ for all $c \in J$. 

Setting $N' = \max \{N_1', \ldots, N_k',N_J\}$ we see that for any $n>N'$ we have $\|\rho_c(g)^n\|<\xi^n$.
From our first observation this proves the proposition.
\end{proof}

\section{The parabola surface}
\label{sect:parabola surface}

\subsection{Construction}
\label{sect:construction}
We follow the construction of the parabola surface as an affine limit of Veech's surfaces built from $2$ regular $n$-gons
as described in \cite{Higl1}.
Consider the regular $n$-gon in $\R^2$ to be the convex hull of the orbit of $(1,0)$ under the rotation matrix 
\begin{equation}
\label{eq:R matrix}
R_t=\left(\begin{array}{rr}
\cos t & -\sin t \\
\sin t & \cos t
\end{array}\right) \quad \text{where $t=\frac{2\pi}{n}$.}
\end{equation}
Three points on this orbit are given by 
$$R_t^{-1}(1,0)=(\cos t, -\sin t), \quad (1,0) \quad \text{and} \quad R_t^{-1}(1,0)=(\cos t, \sin t).$$
There is an affine transformation $C_t:\R^2 \to \R^2$ of the plane which carries these three points to 
$(-1, 1)$, $(0,0)$ and $(1,1)$, respectively and a calculation shows 
\begin{equation}
\label{eq:C1}
C_t(x,y)=\left(\frac{y}{\sin t}, \frac{x-1}{\cos t-1}\right).
\end{equation}
The image of the regular polygon under $C_t$ is the polygon $Q_c^+$ whose vertices lie in the orbit of $(0,0)$ under $T_c =C_t \circ R_t \circ C_t^{-1}$ which we compute
to be the affine map 
\begin{equation}
\label{eq:T}
T_c: \R^2 \to \R^2; \quad \left(\begin{array}{r}
x \\
y
\end{array}\right) \mapsto \left(\begin{array}{rr}
c & c-1 \\
c+1 & c
\end{array}\right) \left(\begin{array}{r}
x \\
y
\end{array}\right)+\left(\begin{array}{r}
1 \\
1
\end{array}\right)
\quad \text{where $c=\cos t$.}
\end{equation}
Observe that because $T_c$ is an affine transformation defined with coefficients in $\Z[c]$, for any $k \in \Z$, the $k$-th vertex $T_c^k(0,0)$ of $Q_c^+$ has coordinates which are polynomial in $c$.

From the above it can be observed that when $c=\cos \frac{2 \pi}{n}$ that $Q_c^+$ is an $n$-gon. When $c=1$, the $T_c$-orbit is given by $T_c^n(0,0)=(n,n^2)$, and we interpret $Q_c^+$ as a polygonal parabola.
Similarly, when $c>1$, $Q_c^+$ should be interpreted as a polygonal hyperbola. 

We let $Q_c^-$ be the image of $Q_c^+$ under rotation by $\pi$ about the origin. To form a surface $\bP_c$ for some $c=\cos \frac{2 \pi}{n}$ or $c \geq 1$,
we identify each edge $e$ of $Q_c^+$ {\em by translation} with the image of the edge of $Q_c^-$ obtained by applying this rotation. These surfaces are depicted in Figure \ref{fig:s1}.

\subsection{Homological generators}
\label{sect:homology}

We use $\Sigma$ to denote the collection of two singularities of $\bP_1$.

\begin{proposition}
\label{prop:relative generators 1}
The saddle connections in the common boundaries of the parabolas $Q_1^+$ and $Q_1^-$ generate $H_1(\bP_1,\Sigma; \Z)$. 
\end{proposition}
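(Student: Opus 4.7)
The plan is to put a natural CW decomposition on $\bP_1$ coming from fan triangulations of the two parabolas $Q_1^+$ and $Q_1^-$ based at the origin $\0=(0,0)$, and then to solve for each interior diagonal in terms of the boundary edges using the boundary relations of the triangular 2-cells.

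First I would set up the triangulation. Write $v_k = T_1^k(0,0)$ for the vertices of $Q_1^+$ so that $v_0 = \0$, and let $e_k$ denote the oriented saddle connection from $v_k$ to $v_{k+1}$ lying in the common boundary. The fan triangulation of $Q_1^+$ from $v_0$ produces non-degenerate triangles $T_k = [v_0, v_k, v_{k+1}]$ for $k \geq 1$ and for $k \leq -2$, whose non-boundary edges are the diagonals $d_k = [v_0, v_k]$ for $|k| \geq 2$. Performing the symmetric construction in $Q_1^-$ and assembling via the edge gluings described in \S\ref{sect:construction} yields a CW decomposition of $\bP_1$ in which every 0-cell lies in $\Sigma$, every 1-cell is a saddle connection (either some $e_k$ or some $d_k$), and the 2-cells are triangles.

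Next, using the description of relative homology from \S\ref{sect:background} and the fact that the 0-skeleton is contained in $\Sigma$, the group $H_1(\bP_1, \Sigma; \Z)$ is the free abelian group on oriented 1-cells modulo the boundary chains of the triangles. Orienting each $T_k$ counterclockwise, its boundary gives the relation $d_{k+1} - d_k - e_k = 0$, so that $d_{k+1} = d_k + e_k$ in $H_1(\bP_1, \Sigma; \Z)$; combined with the trivial identity $d_1 = e_0$, a telescoping induction yields $d_k = e_0 + e_1 + \cdots + e_{k-1}$ for $k \geq 2$, with the analogous formula for $k \leq -2$. The same argument applies to the diagonals in $Q_1^-$. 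Consequently every 1-cell of the triangulation is a finite integer combination of boundary saddle connections, so these generate $H_1(\bP_1, \Sigma; \Z)$.

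The only step requiring any care is verifying that the fan triangulations of $Q_1^+$ and $Q_1^-$ assemble into a bona fide CW decomposition of $\bP_1$ --- i.e., that the edge gluings of \S\ref{sect:construction} respect the triangulation and introduce no unexpected identifications in the interiors of the diagonals. This is immediate from the construction, since the gluing only identifies boundary edges of $Q_1^+$ with boundary edges of $Q_1^-$ by translations, leaving the fan diagonals undisturbed.
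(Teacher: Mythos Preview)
Your overall strategy---triangulate $\bP_1$ and then use the triangle boundary relations to write every non-boundary edge as an integer combination of the $e_k$---is exactly the paper's. The gap is in the choice of triangulation: the fan from $v_0=(0,0)$ does not give a CW decomposition of $Q_1^+$.

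Concretely, the union $\bigcup_{k\geq 1} T_k \cup \bigcup_{k\leq -2} T_k$ misses the open ray $\{(0,y):y>0\}\subset Q_1^+$. If $(0,y)=a\,v_0 + b\,v_k + c\,v_{k+1}$ with $a,b,c\geq 0$ and $a+b+c=1$, the first coordinate gives $bk+c(k+1)=0$, which for $k\geq 1$ or $k\leq -2$ forces $b=c=0$ and hence $y=0$. Equivalently, the fan is not locally finite near that ray: the horizontal segment from $(-k,k^2)$ to $(k,k^2)$ meets the diagonal $d_m=[v_0,v_m]$ at $(k^2/m,\,k^2)$ for every $m\geq k$, so it passes through infinitely many of your $2$-cells. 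In the framework of \S\ref{sect:background}, classes in $H_1(\bP_1,\Sigma;\Z)$ are represented by chains passing through only finitely many triangles, so the module you are computing is not the one in the statement. Your final paragraph checks compatibility with the edge gluings but not this covering/local-finiteness issue, which is where the argument actually breaks.

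The paper fixes this by using horizontal saddle connections (from $(-k,k^2)$ to $(k,k^2)$) together with slope-one diagonals (from $(-k,k^2)$ to $(k+1,(k+1)^2)$), which slice $Q_1^+$ into horizontal trapezoidal strips and give a locally finite triangulation that genuinely covers. On that triangulation the same ``express each interior edge via the boundary relations'' induction you wrote goes through.
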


We denote these saddle connections by $\sigma_i$ for $i \in \Z$. See Figure \ref{fig:parabola2}.

\begin{figure}
\includegraphics[height=3in]{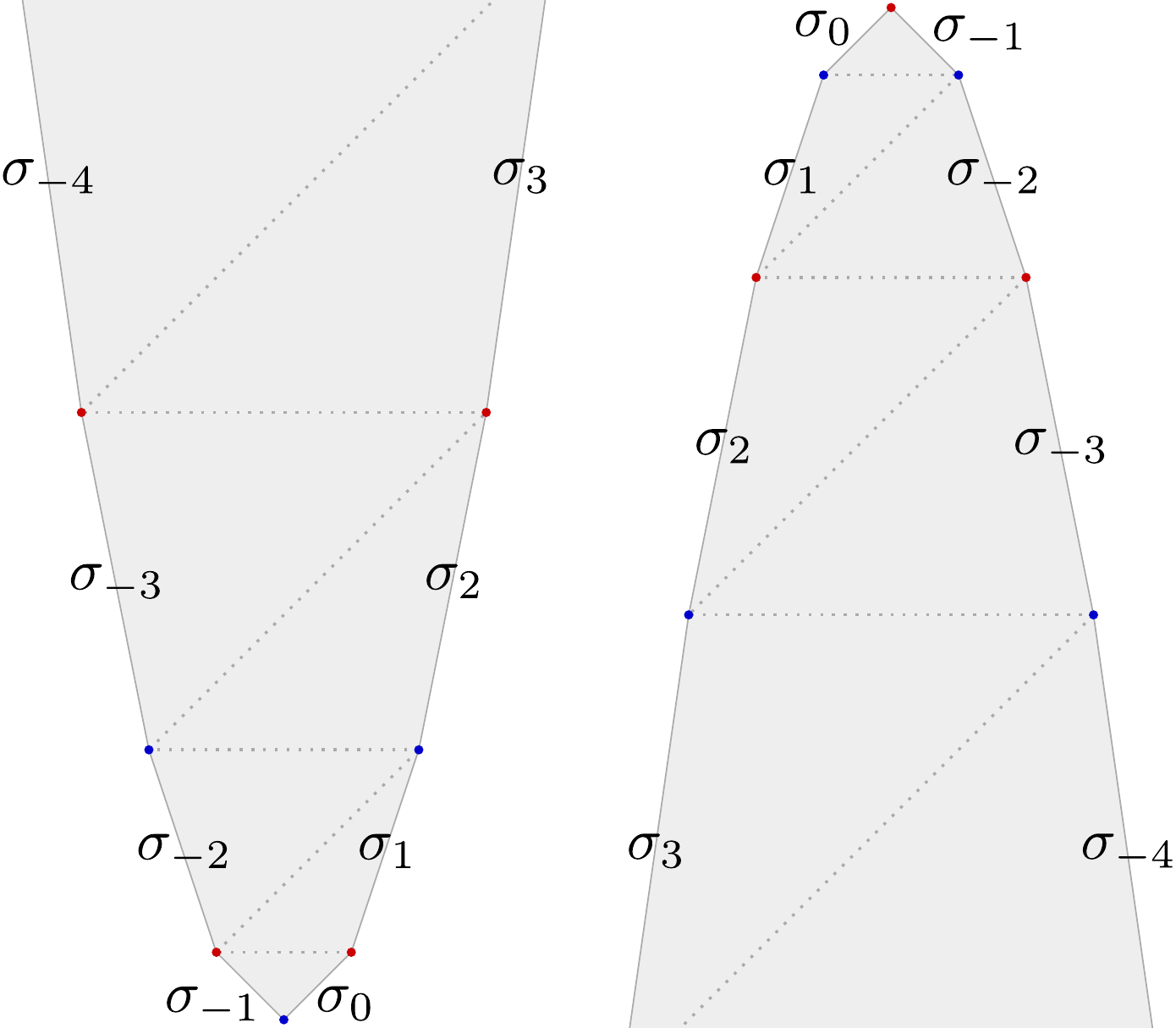}
\hfill
\includegraphics[height=3in]{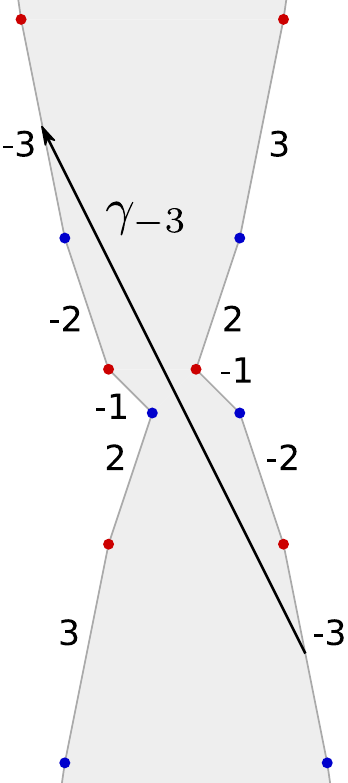}
\caption{Left: the surface $\bP_1$ with saddle connections $\sigma_i$ labeled. Right: the closed geodesic $\gamma_{-3}$.}
\label{fig:parabola2}
\end{figure}

\begin{proof}
The surface $\bP_1$ is triangulated by the saddle connections in the set $\{\sigma_i\}$ together with horizontal and slope one saddle connections. See Figure \ref{fig:parabola2}.
Observe that by using the relation that the sum of edges around a triangle is zero, we can inductively write each horizontal and slope one saddle connection as a sum of the $\sigma_i$. 
\end{proof}

For each integer $j \neq 0$ we define $\gamma_{j}$ to be the closed geodesic which travels within $Q_1^+$ from the midpoint of $\sigma_0$ to the midpoint of $\sigma_{j}$ and then travel back within $Q_1^-$. 
See the right side of Figure \ref{fig:parabola2}.
Let $\bP_1^\circ =\bP_1 \smallsetminus \Sigma$.

\begin{proposition}
\label{prop: gamma generate}
The curves $\gamma_{j}$ generate $H_1(\bP_1^\circ; \Z)$. 
\end{proposition}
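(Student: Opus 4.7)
The plan is to run an argument parallel to Proposition \ref{prop:relative generators 1}, but using the algebraic intersection pairing $\cap \colon H_1(\bP_1^\circ; \Z) \times H_1(\bP_1, \Sigma; \Z) \to \Z$ as the key tool. Since the $\sigma_i$ already generate $H_1(\bP_1, \Sigma; \Z)$ by Proposition \ref{prop:relative generators 1}, any class in $H_1(\bP_1^\circ; \Z)$ is determined by its intersection numbers $n_i = \eta \cap \sigma_i$, provided the pairing is non-degenerate on the $H_1(\bP_1^\circ; \Z)$ side; it therefore suffices to realize every possible pattern $(n_i)_{i \in \Z}$ as the intersection profile of some $\Z$-combination of the $\gamma_j$.

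First I compute $\gamma_j \cap \sigma_i$ for each $j \neq 0$ and $i \in \Z$. By construction, $\gamma_j$ is the concatenation of an arc in $Q_1^+$ from the midpoint of $\sigma_0$ to the midpoint of $\sigma_j$ with a returning arc in $Q_1^-$. Each arc sits inside one of the parabolas and is therefore disjoint from every $\sigma_i$ except at its two endpoints, so $\gamma_j$ meets the system $\{\sigma_i\}$ only via single crossings of $\sigma_0$ and $\sigma_j$; moreover $\gamma_j$ passes from $Q_1^+$ to $Q_1^-$ exactly once and back once, so these two crossings carry opposite signs. Orienting all $\sigma_i$ uniformly (say with $Q_1^+$ on the left) and orienting $\gamma_j$ so that $\gamma_j \cap \sigma_0 = +1$, the result is $\gamma_j \cap \sigma_i = \delta_{i,0} - \delta_{i,j}$.

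Now let $\eta \in H_1(\bP_1^\circ; \Z)$ be arbitrary and set $n_i = \eta \cap \sigma_i$. Since a representing loop is compact it meets only finitely many triangles of the triangulation used in Proposition \ref{prop:relative generators 1}, so only finitely many $n_i$ are nonzero. Since the union $\bigcup_i \sigma_i$ separates $\bP_1^\circ$ into the two open parabolas $\mathrm{int}(Q_1^+)$ and $\mathrm{int}(Q_1^-)$, any closed loop in $\bP_1^\circ$ crosses this separator equally in each direction, so $\sum_{i \in \Z} n_i = 0$. Put $\hat\eta = -\sum_{j \neq 0} n_j \gamma_j$, a finite sum; combining with the intersection formula of the previous paragraph and the identity $\sum_i n_i = 0$ yields $\hat\eta \cap \sigma_i = n_i$ for every $i \in \Z$. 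Hence $\eta - \hat\eta$ pairs trivially with every $\sigma_i$, and therefore, since the $\sigma_i$ generate $H_1(\bP_1, \Sigma; \Z)$, with every class in $H_1(\bP_1, \Sigma; \Z)$.

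The main obstacle is then concluding $\eta = \hat\eta$ from the vanishing of $(\eta - \hat\eta) \cap \sigma = 0$ for all $\sigma$. This is non-degeneracy of the intersection pairing on the $H_1(\bP_1^\circ; \Z)$ factor, which in the classical compact setting follows from Poincar\'e--Lefschetz duality but here requires a direct argument because $\bP_1^\circ$ is an infinite-genus surface with two infinite-cone singularities removed. I would verify it by noting that $\bP_1^\circ$ deformation retracts onto the $1$-complex dual to the triangulation of Proposition \ref{prop:relative generators 1}: in this presentation a class in $H_1(\bP_1^\circ;\Z)$ is literally a finite formal $\Z$-sum of dual edges with vanishing boundary, and $\eta \cap \sigma_i$ simply reads off the coefficient of the dual edge $\sigma_i^\ast$. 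Hence $\eta \cap \sigma_i = 0$ for all $i$ forces $\eta = 0$, completing the argument.
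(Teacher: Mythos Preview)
Your argument is correct, but it takes a considerably longer route than the paper. The paper's proof is a three-line $\pi_1$ computation: the region $R$ obtained by gluing $Q_1^+$ and $Q_1^-$ along only the interior of $\sigma_0$ is simply connected, and $\bP_1^\circ$ is formed from $R$ by identifying the remaining edges $\sigma_j$, $j\neq 0$, in pairs; hence $\pi_1(\bP_1^\circ)$ is generated by loops in $R$ crossing a single $\sigma_j$, namely the $\gamma_j$, and so is its abelianization.

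Your approach instead reconstructs $\eta$ from its intersection profile $(\eta\cap\sigma_i)_i$ and then appeals to non-degeneracy. The first three steps are fine (and the computation $\gamma_j\cap\sigma_i=\pm(\delta_{i,0}-\delta_{i,j})$ is exactly what the paper uses later in the proof of Lemma~\ref{lem:intersection parabola}). Two remarks on the last step. First, as written there is a small slip: the dual $1$-complex to the triangulation of Proposition~\ref{prop:relative generators 1} has edges dual to the horizontal and slope-one saddle connections as well as to the $\sigma_i$, so ``$\eta\cap\sigma_i=0$ for all $i$'' alone does not kill every dual-edge coefficient; you need the stronger fact you already recorded, that $\eta-\hat\eta$ pairs trivially with all of $H_1(\bP_1,\Sigma;\Z)$ and hence with every edge of the triangulation. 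Second, once you grant the deformation retraction onto that dual graph, you have essentially re-derived the paper's argument in disguise: the horizontal and slope-one dual edges each join two triangles inside the \emph{same} parabola, so collapsing them leaves the two-vertex graph with edge set $\{\sigma_i^\ast\}$, whose $H_1$ is visibly generated by the loops $\sigma_0^\ast\cup\sigma_j^\ast$, i.e.\ by the $\gamma_j$. So the non-degeneracy step already contains the direct proof, and the intersection-pairing scaffolding, while it works, is not buying you anything here.
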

\begin{proof}
Consider the polygonal region $R$ in $\R^2$ formed by gluing together $Q_1^+$ and $Q_1^-$ (not including vertices) along the interior of the edge $\sigma_0$. The region $R$ is simply connected. To form $\bP_1^\circ$ we glue along the interiors of edges $\sigma_j$ for $j \neq 0$. Thus the fundamental group of $\bP_1^\circ$ is generated by curves within $R$ which cross over exactly one of these edges. These are our $\gamma_j$ curves, and this means that they also generate the abelianization of the fundamental group $H_1(\bP_1^\circ; \Z)$. 
\end{proof}

\subsection{Deformation and holonomy}
\label{sect:deformation}
The surfaces $\bP_c$ for $c \geq 1$ are all homeomorphic by homeomorphisms $\bP_c \to \bP_{c'}$ respecting the decompositions $\bP_c=Q_c^+ \cup Q_c^-$ and $\bP_{c'}=Q_{c'}^+ \cup Q_{c'}^-$ and sending the orbit $n \mapsto T_c^n(0,0)$ of vertices of $Q_c^+$ to the orbit $n \mapsto T_{c'}^n(0,0)$ of vertices of $Q_{c'}^+$. This uniquely characterizes the homeomorphism up to isotopy.

The paper \cite{Higl1} noted that the surfaces $\bP_c$ for $c \geq 1$ are all naturally homeomorphic, and proved that the surfaces have the same geodesics in a coding sense and have affine automorphism groups which act in the same  way (up to the natural homeomorphism on the surfaces). 

For each $c \geq 1$ we have notion of holonomy on $\bP_c$. Using the canonical homeomorphism $\bP_1 \to \bP_c$
we can evaluate this holonomy on classes in $\bP_1$ giving us a family of holonomy maps
$$\hol_c:H_1(\bP_1,\Sigma,\Z) \to \R^2 \quad \text{defined for $c \geq 1$}.
$$
Observe that $T_c$ has determinant one and entries which are polynomial in $c$. Thus
all vertices of $Q_c^+$ have coordinates which are integer polynomials in $c$. It follows that for any $\sigma \in H_1(\bP_1,\Sigma; \Z)$ the map $c \mapsto \hol_c~\sigma$ lies in $\Z[c]^2$. 
We define the {\em deformation holonomy map} of the family of surfaces $\bP_c$ to be the map
$$\widetilde \hol:H_1(\bP_1,\Sigma;\Z) \to \Z[c]^2$$
so that for $c \geq 1$ we have $\widetilde \hol(\sigma)(c)=\hol_c(\sigma)$.

By tensoring with $\R$ we extend $\widetilde \hol$ to 
\begin{equation}
\label{eq:real holonomy}
\widetilde \hol:H_1(\bP_1,\Sigma;\R) \to \R[c]^2.
\end{equation}
We have the following:
\begin{proposition}
\label{prop:isomorphism 1}
The map $\widetilde \hol$ of \eqref{eq:real holonomy} is an isomorphism of $\R$-modules.
\end{proposition}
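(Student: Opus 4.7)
The plan is to reduce the claim to a concrete linear-algebra assertion about explicit polynomial vectors, which I will then verify using a leading-term analysis in $c$.

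By Proposition \ref{prop:relative generators 1} (tensored with $\R$), the family $\{\sigma_i\}_{i \in \Z}$ generates $H_1(\bP_1, \Sigma; \R)$ as an $\R$-module. It therefore suffices to show that the images $\{\widetilde{\hol}(\sigma_i)\}_{i \in \Z}$ form an $\R$-basis of $\R[c]^2$: spanning yields surjectivity of $\widetilde{\hol}$, while linear independence combined with the generating property yields injectivity.

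Next I will identify the holonomies explicitly. Writing $V_k = T_c^k(0,0)$, each edge $\sigma_i$ of the common boundary of $Q_c^\pm$ has holonomy $\widetilde{\hol}(\sigma_i) = \pm(V_{j+1} - V_j) \in \Z[c]^2$ for some $j \in \Z$ depending on $i$. The $c$-degree and leading coefficient of each $V_k$ I will extract from the forward recursion $V_{k+1} = M V_k + (1,1)$ and the backward recursion $V_{-k-1} = M^{-1} V_{-k} - (1,-1)$, with $M = \bigl(\begin{smallmatrix} c & c-1 \\ c+1 & c \end{smallmatrix}\bigr)$. The $c$-leading part of $M$ is $c \bigl(\begin{smallmatrix} 1 & 1 \\ 1 & 1 \end{smallmatrix}\bigr)$, whose dominant eigenline is $\R(1,1)$ with eigenvalue $2c$; symmetrically, the leading part of $M^{-1}$ has dominant eigenline $\R(-1,1)$ with eigenvalue $2c$. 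Straightforward inductions then give, for each $k \geq 1$,
\[
V_k = 2^{k-1} c^{k-1}(1,1) + O(c^{k-2})
\quad\text{and}\quad
V_{-k} = 2^{k-1} c^{k-1}(-1,1) + O(c^{k-2}).
\]
Carrying out and tracking these two parallel inductions is the main calculational step, though it is entirely mechanical; the only place one must take any care is handling the inhomogeneous constant term $(1,1)$, which stays bounded and so does not affect the top-order behavior.

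From the displays above, for each $d \geq 0$ exactly two of the holonomies $\widetilde{\hol}(\sigma_i)$ have $c$-degree equal to $d$, namely $V_{d+1}-V_d$ with leading term $2^d c^d(1,1)$ and $V_{-d}-V_{-d-1}$ with leading term $2^d c^d(1,-1)$. Since $(1,1)$ and $(1,-1)$ are linearly independent in $\R^2$, and the graded piece $\R_{\leq d}[c]^2 / \R_{\leq d-1}[c]^2$ has $\R$-dimension $2$, a standard triangular argument on the degree filtration shows inductively that the $2(d+1)$ elements among $\{\widetilde{\hol}(\sigma_i)\}_{i \in \Z}$ of $c$-degree at most $d$ form an $\R$-basis of $\R_{\leq d}[c]^2$. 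Letting $d \to \infty$ yields that $\{\widetilde{\hol}(\sigma_i)\}_{i \in \Z}$ is an $\R$-basis of $\R[c]^2$, completing the proof that $\widetilde{\hol}$ is an isomorphism.
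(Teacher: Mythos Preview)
Your proof is correct and follows essentially the same approach as the paper: a leading-coefficient analysis showing that for each degree $d$ exactly two of the $\widetilde\hol(\sigma_i)$ have top term $2^d c^d(1,\pm 1)$, followed by a triangular argument on the degree filtration of $\R[c]^2$. The only difference is cosmetic: the paper works directly with the edge holonomies, which satisfy the \emph{homogeneous} recursion $\widetilde\hol\,\sigma_{j+1}=M\,\widetilde\hol\,\sigma_j$ (since differences of consecutive vertices kill the translational part of $T_c$), so one never needs to track the inhomogeneous term $(1,1)$ as you do via the $V_k$.
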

\begin{proof}
For $d \geq 0$ let $P_d$ denote the collection of polynomials of degree at most $d$ with coefficients in $\R$. 
Because we have normalized three vertices joined by $\sigma_{-1}$ and $\sigma_0$ in our construction of $Q_c^+$ we can see that
\begin{equation}
\label{eq:hol sigma 0}
\widetilde \hol\, \sigma_0=(1,1) \quad \text{and} \quad \widetilde \hol\, \sigma_{-1}=(1,-1).
\end{equation}
Both these vectors lie in $P_0^2$. 
Recall that $T_c$ carries each vertex of $Q_c^+$ to the subsequent vertex. From the definition of $T_c$ in \eqref{eq:T} we see that for all $j \in \Z$,
$$\widetilde \hol\, \sigma_{j+1}
=
\left(\begin{array}{rr}
c & c-1 \\
c+1 & c
\end{array}\right)
\widetilde \hol\, \sigma_{j}
\quad \text{and} \quad
\widetilde \hol\, \sigma_{j-1}
=
\left(\begin{array}{rr}
c & -c+1 \\
-c-1 & c
\end{array}\right)
\widetilde \hol\, \sigma_{j}.$$
Just considering the $c$s appearing in these matrices we can prove inductively that for $k > 0$,
\begin{equation}
\label{eq:hol sigma k}
\widetilde \hol\, \sigma_k \in \big( (2c)^k, (2c)^k\big)+P_{k-1}^2
\quad \text{and} \quad 
\widetilde \hol\, \sigma_{-k-1} \in \big( (2c)^k, -(2c)^k\big)+P_{k-1}^2.
\end{equation}

To prove the proposition, it suffices to show that for every $n \geq 0$, the restriction of $\widetilde \hol$ to
$\textit{span}_\R~\{\sigma_{-n-1}, \sigma_{-n}, \ldots, \sigma_{n-1}, \sigma_n\}$ is an isomorphism to $P_n^2.$
Noting that there are $2n+2$ vectors in the list which matches the dimension of the space $P_n^2$, we see it suffices
to prove that 
$$\widetilde \hol \big(\textit{span}_\R~\{\sigma_{-n-1}, \sigma_{-n}, \ldots, \sigma_{n-1}, \sigma_n\}\big) \supset P_n^2.$$
We prove this by induction. From \ref{eq:hol sigma 0} we see that the statement is true when $n=0$. Now assuming the statement holds for $n=k-1$,
i.e., $\widetilde \hol\,\big(\textit{span}_\R~\{\sigma_{-k-2}, \ldots, \sigma_{k-1}\}\big) \supset P_{k-1}^2$, we see from \eqref{eq:hol sigma k} that by adding $\sigma_{k}$ and $\sigma_{-k-1}$ to the list, the image of the new span contains $P_{k}^2$.
\end{proof}

Now consider the related map
\begin{equation}
\label{eq:real holonomy 2}
\widetilde \hol:H_1(\bP_1^\circ; \R) \to \R[c]^2.
\end{equation}
To understand it, consider the exact sequence of groups
$$0 \rightarrow H_1(\bP_1^\circ; \R) \xrightarrow{\iota_\ast} H_1(\bP_1,\Sigma;\R) \xrightarrow{\partial_\ast} H_0(\Sigma;\R) \rightarrow H_0(\bP_1^\circ; \R) \rightarrow H_0(\bP_1,\Sigma;\R) \rightarrow 0.$$
This is the standard relative homology long exact sequence in our setting, where we have used the observation that $H_\ast(\bP_1; \R) \cong H_\ast(\bP_1^\circ; \R)$ since the two singularities are isolated and of infinite cone type guaranteeing that $\bP_1$ is homotopy equivalent to $\bP_1^\circ$.
The holonomy map factors through the inclusion $\iota_\ast$, i.e., $\widetilde \hol \circ \iota_\ast = \widetilde \hol$ and the above exact sequence says that
$\iota_\ast$ is an inclusion so we see that the holonomy map \eqref{eq:real holonomy 2} is an isomorphism to a subspace of $\R[c]^2$. To figure out what that subspace is
observe that $\iota_\ast \big(H_1(\bP_1^\circ; \R)\big)=\ker~\partial_\ast$. We can enumerate the two singularities as $s_0$ representing the identified vertices $\{(n,n^2)\}$ of the polygonal parabola with even coordinates and $s_1$ representing the identified vertices with odd coordinates.
The image of $\delta_\ast$ is the collection elements of $H_0(\Sigma;\R)$ of the form $x [s_1]-x[s_0]$ for some $x \in \R$. 
We can recover this value of $x$ as the image of $H_1(\bP_1,\Sigma;\R)$ using the map
$$\epsilon:H_1(\bP_1,\Sigma;\R) \to \R; \quad \sigma \mapsto \frac{1}{2}\big(\partial \sigma(s_1)-\partial \sigma(s_0)\big)$$
which allows us to define the short exact sequence
$$0 \rightarrow H_1(\bP_1^\circ; \R) \xrightarrow{\iota_\ast} H_1(\bP_1,\Sigma;\R) \xrightarrow{\epsilon} \R \rightarrow 0.$$

\begin{proposition}
\label{prop:isomorphism 2}
For $\sigma \in \hol(\bP_1, \Sigma; \R)$ and $(x,y) = \widetilde \hol(\sigma)$ the quantity $y(-1)$ gives
the value of $\epsilon(\sigma)$. Thus we have the isomorphism
$$\widetilde \hol:H_1(\bP_1^\circ; \R) \to \{(x,y) \in \R[c]^2:~y(-1)=0\}.$$

\end{proposition}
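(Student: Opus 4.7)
\startproof
The plan is to reduce everything to a direct computation on the generating set $\{\sigma_j : j \in \Z\}$ provided by Proposition \ref{prop:relative generators 1}. Define an auxiliary linear functional $\epsilon': H_1(\bP_1,\Sigma;\R) \to \R$ by $\epsilon'(\sigma) = y(-1)$ where $(x,y) = \widetilde\hol(\sigma)$. This is indeed linear since $\widetilde\hol$ is linear and evaluation at $c=-1$ is linear on $\R[c]$. What we must prove is that $\epsilon' = \epsilon$; the stated isomorphism then follows immediately from the short exact sequence
$$0 \to H_1(\bP_1^\circ;\R) \xrightarrow{\iota_\ast} H_1(\bP_1,\Sigma;\R) \xrightarrow{\epsilon} \R \to 0$$
combined with Proposition \ref{prop:isomorphism 1}, because $\widetilde\hol$ carries $\ker\epsilon$ bijectively onto $\ker\epsilon' = \{(x,y) \in \R[c]^2 : y(-1) = 0\}$.

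Since both $\epsilon$ and $\epsilon'$ are linear and the $\sigma_j$ generate $H_1(\bP_1,\Sigma;\R)$, it suffices to check $\epsilon(\sigma_j) = \epsilon'(\sigma_j) = (-1)^j$ for every $j \in \Z$. For $\epsilon(\sigma_j)$, recall that $\sigma_j$ is the saddle connection from the vertex $T_c^j(0,0)$ to $T_c^{j+1}(0,0)$. With the convention that $s_0$ denotes the singularity at the even-indexed vertices and $s_1$ the odd-indexed ones, we see that $\partial \sigma_j = [s_1] - [s_0]$ when $j$ is even and $\partial \sigma_j = [s_0] - [s_1]$ when $j$ is odd. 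In either case $\epsilon(\sigma_j) = \tfrac{1}{2}\bigl(\partial\sigma_j(s_1) - \partial\sigma_j(s_0)\bigr) = (-1)^j$.

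For $\epsilon'(\sigma_j) = y_j(-1)$, use the recursion established in the proof of Proposition \ref{prop:isomorphism 1}: writing $\widetilde\hol\,\sigma_j = (x_j,y_j)$, the relation
$$\widetilde\hol\,\sigma_{j+1} = \begin{pmatrix} c & c-1 \\ c+1 & c \end{pmatrix} \widetilde\hol\,\sigma_j$$
gives $y_{j+1} = (c+1)x_j + c\,y_j$. Evaluating at $c=-1$ kills the first term and leaves $y_{j+1}(-1) = -y_j(-1)$. Starting from $\widetilde\hol\,\sigma_0 = (1,1)$ we get $y_0(-1) = 1$, so induction yields $y_j(-1) = (-1)^j$ for $j \geq 0$. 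The analogous inverse recursion from the same proof handles $j < 0$ identically (in particular the base case $y_{-1}(-1) = -1$ agrees with $\widetilde\hol\,\sigma_{-1} = (1,-1)$). Thus $\epsilon' = \epsilon$ on generators, hence everywhere, completing the proof.

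There is no real obstacle; the main thing to keep straight is the bookkeeping for which singularity sits at which endpoint of $\sigma_j$ and the corresponding sign in the definition of $\epsilon$. The slight surprise is that the kernel of $\epsilon$ is detected by evaluation at $c = -1$ (rather than at some value within the geometrically meaningful range $c \geq 1$), but this is forced by the fact that $c=-1$ is precisely where the deformation matrix becomes upper triangular and $y$-coordinates undergo a pure sign change per step.
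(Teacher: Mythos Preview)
Your proof is correct and follows essentially the same approach as the paper: reduce by linearity to the generators $\sigma_j$, then verify $\epsilon(\sigma_j)=(-1)^j$ and $y_j(-1)=(-1)^j$. The only cosmetic difference is that the paper records the closed form $\hol_{-1}(\sigma_j)=(-1)^j(2j+1,1)$, whereas you extract just the $y$-coordinate via the recursion $y_{j+1}(-1)=-y_j(-1)$; both are fine.
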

\begin{proof}
Since the map sending $\sigma$ to $y(-1)$ is linear, it suffices to check that $\epsilon(\sigma)$ agrees with $y(-1)$ on
our basis $\sigma_j$ for $\hol(\bP_1, \Sigma; \R)$.  Observe that $\epsilon(\sigma_j)=(-1)^j$. 
Using formulas from the proof of Proposition \ref{prop:isomorphism 1} we can compute that
$$\hol_{-1}(\sigma_j)=(-1)^j\, (2j+1,1),$$
and note that the $y$-coordinate matches $\epsilon(\sigma_j)$. 
\end{proof}

We will now prove the integral formula mentioned in the introduction.

\begin{lemma}[Intersection as integration]
\label{lem:intersection parabola}
The algebraic intersection number of any $\gamma \in H_1(\bP_1^\circ; \R)$ and any $\sigma \in H_1(\bP_1,\Sigma; \R)$ is given by
$$\frac{1}{2\pi} \int_0^\pi \big((\widetilde \hol~\gamma) \wedge (\widetilde \hol~\sigma) \big) (1-\cos t)~dt,$$
where the wedge product in the integral yields an element of $\Z[c]$ interpreted as a function of $t$ by setting $c=\cos t$. 
\end{lemma}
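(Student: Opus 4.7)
Both sides of the proposed equation are $\R$-bilinear in $\gamma$ and $\sigma$ (the right-hand side because $\widetilde\hol$ is linear and $\wedge$ is bilinear), so by Propositions~\ref{prop:relative generators 1} and~\ref{prop: gamma generate} it suffices to verify the identity when $\gamma=\gamma_j$ for $j\neq 0$ and $\sigma=\sigma_k$ for $k\in\Z$.

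For the left-hand side, direct inspection of the geometry shows that the closed curve $\gamma_j$ meets the collection of edges $\{\sigma_k\}$ transversally and only at the midpoints of $\sigma_0$ and $\sigma_j$. A sign computation using the tangent vectors at these two crossings (with the sign convention fixed in Section~\ref{sect:background}) yields $\gamma_j\cap\sigma_0=-1$, $\gamma_j\cap\sigma_j=+1$, and $\gamma_j\cap\sigma_k=0$ for $k\notin\{0,j\}$; the case $j<0$ is handled analogously.

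For the right-hand side the central identity to establish is
$$\widetilde\hol\,\sigma_j\wedge\widetilde\hol\,\sigma_k\big|_{c=\cos t}\;=\;\frac{2\sin((k-j)t)}{\sin t}.$$
This is proved by diagonalizing the linear part $M_c=\left(\begin{smallmatrix} c & c-1 \\ c+1 & c\end{smallmatrix}\right)$ of the affine map $T_c$ in \eqref{eq:T}: since $M_c$ has trace $2c$ and determinant $1$, at $c=\cos t$ its eigenvalues are $e^{\pm it}$. Expressing $\widetilde\hol\,\sigma_0=(1,1)$ in the complex eigenbasis and computing the wedge of $M_c^j(1,1)^T$ and $M_c^k(1,1)^T$ produces the claimed closed form after elementary simplification. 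Using the vertex formula $v_n=\sum_{i=0}^{n-1}\widetilde\hol\,\sigma_i$ I then obtain $\widetilde\hol\,\gamma_j=\widetilde\hol\,\sigma_0+2\sum_{i=1}^{j-1}\widetilde\hol\,\sigma_i+\widetilde\hol\,\sigma_j$ in $\R[c]^2$ for $j>0$, so by bilinearity of the wedge the right-hand side reduces to a finite $\R$-linear combination of the integrals $I(m):=\frac{1}{\pi}\int_0^\pi\frac{\sin(mt)(1-\cos t)}{\sin t}\,dt$ for integers $m$.

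To evaluate $I(m)$, expand $(1-\cos t)=2\sin^2(t/2)$ and apply product-to-sum identities to express $I(m)$ as a combination of the Dirichlet-type integrals $J(n):=\frac{1}{\pi}\int_0^\pi\frac{\sin(nt)}{\sin t}\,dt$. The identity $\sin((n+2)t)-\sin(nt)=2\cos((n+1)t)\sin t$ gives the recursion $J(n+2)=J(n)$ for $n\geq 0$; combined with $J(0)=0$, $J(1)=1$, and $J(-n)=-J(n)$, this yields $I(m)=(-1)^{m+1}\mathrm{sgn}(m)$ for $m\neq 0$ and $I(0)=0$. Substituting these values into the expression for the right-hand side and evaluating the resulting alternating sum over $i$ reproduces exactly the geometric intersection numbers computed in the second paragraph. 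I expect the main obstacle to be establishing the wedge identity cleanly via the eigenbasis of $M_c$ and handling sign conventions throughout; once this is in place the remaining trigonometric calculations are classical.
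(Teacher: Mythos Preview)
Your argument is correct, and the overall architecture matches the paper's: reduce by bilinearity to the generators $\gamma_j$ and $\sigma_k$, compute the geometric intersection numbers directly, and evaluate the integral by exploiting the fact that at $c=\cos t$ the linear part $M_c$ of $T_c$ is conjugate to a rotation. Your wedge identity $\widetilde\hol\,\sigma_j\wedge\widetilde\hol\,\sigma_k = 2\sin\!\big((k-j)t\big)/\sin t$ is right, your decomposition $\widetilde\hol\,\gamma_j=\widetilde\hol\,\sigma_0+2\sum_{i=1}^{j-1}\widetilde\hol\,\sigma_i+\widetilde\hol\,\sigma_j$ checks out against the paper's formula $\widetilde\hol\,\gamma_j=v_{j+1}+v_j-v_1-v_0$, and your evaluation $I(m)=(-1)^{m+1}\operatorname{sgn}(m)$ is correct and does telescope to $\delta_{j,k}-\delta_{0,k}$.

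The paper takes a shorter path after the bilinearity reduction. Rather than decomposing $\gamma_j$ into a combination of $\sigma_i$'s, it writes both $\widetilde\hol\,\gamma_j$ and $\widetilde\hol\,\sigma_k$ directly as images under the explicit affine map $C_t$ of \eqref{eq:C1} (which is exactly your diagonalization of $M_c$, packaged geometrically). Because $C_t$ scales wedge products by the known factor $\frac{1}{(1-\cos t)\sin t}$, the full integrand collapses in one step to
$2\cos\!\big((k-j)t\big)-2\cos(kt)$,
so the $\frac{1}{2\pi}\int_0^\pi$ is read off instantly from orthogonality of cosines. This bypasses both the Dirichlet integrals $J(n)$ and the final alternating sum over $i$. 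Your route has the minor advantage that the basic identity $\widetilde\hol\,\sigma_j\wedge\widetilde\hol\,\sigma_k=2\sin((k-j)t)/\sin t$ is a clean standalone fact about the $\sigma$-basis; the cost is the extra bookkeeping at the end, which the paper avoids by never breaking $\gamma_j$ apart.
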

\begin{proof}
Observe that the integral expression is bilinear in $\gamma$ and $\sigma$. Algebraic intersection number is bilinear as well, so it suffices to check
the formula on our generating sets for $H_1(\bP_1^\circ; \Z)$ and $H_1(\bP_1, \Sigma; \Z)$.
Observe that $\gamma_j$ (oriented to move from $\sigma_1$ to $\sigma_j$ in $Q_1^+$) intersects $\sigma_j$ with positive sign and $\sigma_0$ with negative sign. 
Thus $\gamma_j \cap \sigma_k = \delta_{j,k} - \delta_{0,k}$ where $\delta_{a,b}$ equals $1$ if $a=b$ is zero otherwise.
We will show that the integral evaluates to the same expression.

First we need to compute $\widetilde \hol~\gamma_j$ and $\widetilde \hol~\sigma_k$. Let $v_n=T_c^n(0,0) \in \Z[c]^2$ be the $n$-th vertex of $Q_c^+$ viewed as a polynomial in $c$.
Observe
$$
\widetilde \hol~\gamma_j = v_{j+1}+v_{j}-v_1-v_0 \quad \text{and} \quad 
\widetilde \hol~\sigma_k = v_{k+1}-v_k.
$$
Using the fact that $T_c =C_t \circ R_t \circ C_t^{-1}$ and the definition of $C_t$ in \eqref{eq:C1}, we see 
$v_n=C_t(\cos nt, \sin nt).$
Therefore
\begin{equation}
\label{eq:c image}
\begin{array}{ll}
\widetilde \hol~\gamma_j = C_t \Big(\cos\big((j+1)t\big)+\cos(jt)-\cos(t)-1, \sin\big((j+1)t\big)+\sin(jt)-\sin(t)\Big),\\
\widetilde \hol~\sigma_k = C_t \Big(\cos\big((k+1)t\big)-\cos(kt), \sin\big((k+1)t\big)-\sin(kt)\Big).
\end{array}
\end{equation}
The transformation $C_t$ is affine and scales signed area by a multiplicative constant. Namely,
$$C_t(\bv) \wedge C_t(\bw)=\frac{1}{{\left(1-\cos t\right)} \sin t} (\bv \wedge \bw)
\quad \text{for any $\bv,\bw \in \R^2$}.$$
Letting $\bv=C_t^{-1}(\widetilde \hol~\gamma_j)$ and $\bw=C_t^{-1}(\widetilde \hol~\sigma_k)$ be the quantities in parenthesis enclosed in \eqref{eq:c image},
we see that the quantity being integrated is
$$\frac{\bv \wedge \bw}{\sin t}=2 \cos\big((k-j) t\big)-2 \cos(k t),$$
where we have done significant simplifying using trigonometric identities. It follows that
$$\frac{1}{2\pi} \int_0^{\pi} \frac{\bv \wedge \bw}{\sin t}~dt=\delta_{j,k}-\delta_{0,k}.$$
as desired.
\end{proof}

For our later discussion of geometric intersection numbers, it will be useful for us to extend our notion
of algebraic intersection number to a bilinear map 
\begin{equation}
\label{eq:extended intersection number}
\cap : H_1(\bP_1,\Sigma; \R) \times H_1(\bP_1,\Sigma; \R); \quad \sigma_1 \cap \sigma_2 =
\frac{1}{2\pi} \int_0^\pi \big((\widetilde \hol~\sigma_1) \wedge (\widetilde \hol~\sigma_2) \big) (1-\cos t)~dt.
\end{equation}
This extension has geometric meaning for saddle connections.

\begin{lemma}
\label{lem:intersection parabola 2}
Let $\sigma_1$ and $\sigma_2$ be saddle connections and let $i(\sigma_1, \sigma_2)$ denote the number
of (unsigned) intersections of $\sigma_1$ and $\sigma_2$ not counting those that occur at the singularities. 
Then
$$\Big|i(\sigma_1,\sigma_2) - |\sigma_1 \cap \sigma_2|\Big| \leq 1.$$
\end{lemma}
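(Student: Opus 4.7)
Set $\epsilon := \mathrm{sgn}\big(\hol_1(\sigma_1) \wedge \hol_1(\sigma_2)\big) \in \{-1,0,1\}$. Because each saddle connection $\sigma_i$ is a straight-line geodesic with constant tangent direction $\hol_1(\sigma_i) \in \R^2$, every transverse interior intersection of $\sigma_1$ with $\sigma_2$ has local crossing sign equal to $\epsilon$. In the case $\epsilon = 0$ the two holonomies are parallel, the two arcs cannot meet transversally in the interior, and $i(\sigma_1,\sigma_2)=0$; the integrand of \eqref{eq:extended intersection number} also vanishes pointwise, giving $\sigma_1 \cap \sigma_2 = 0$, so the inequality holds trivially. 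Assume from now on that $\epsilon = \pm 1$. Then the signed transverse count of interior intersections is $N := \epsilon \cdot i(\sigma_1,\sigma_2)$ and $|N|=i(\sigma_1,\sigma_2)$. By the reverse triangle inequality, the lemma reduces to proving
\[
\big|\sigma_1 \cap \sigma_2 - N\big| \;\leq\; 1.
\]

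The plan is to isolate the contribution of shared endpoints by comparing $\sigma_2$ against a nearby \emph{absolute} homology class. Using the exact sequence preceding Proposition \ref{prop:isomorphism 2}, decompose $\sigma_2 = \bar\sigma_2 + \epsilon(\sigma_2)\,\sigma_0$ with $\bar\sigma_2 \in \iota_\ast H_1(\bP_1^\circ; \R)$ and $\epsilon(\sigma_2) \in \{-1,0,1\}$ (the latter because $\sigma_2$ is a single saddle connection). Represent $\bar\sigma_2$ by a closed loop $\ell$ in $\bP_1^\circ$ that agrees with $\sigma_2$ outside of small disks $U_p$ around each of the singular endpoints of $\sigma_2$, and closes up inside each $U_p$ by a short arc avoiding $\Sigma$. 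By bilinearity of the extended intersection form,
\[
\sigma_1 \cap \sigma_2 \;=\; \sigma_1 \cap \bar\sigma_2 \;+\; \epsilon(\sigma_2)\,(\sigma_1 \cap \sigma_0).
\]
Because $\bar\sigma_2$ is an absolute class, the pairing $\sigma_1 \cap \bar\sigma_2$ is the classical signed transverse count of interior intersections of $\sigma_1$ with $\ell$; outside $\bigcup_p U_p$ those intersections coincide with the interior intersections of $\sigma_1$ with $\sigma_2$, contributing exactly $N$. The only additional transverse crossings are localized inside those $U_p$ for which $\sigma_1$ itself has an endpoint at $p$.

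The main step, and the main obstacle, is to show that this local boundary discrepancy combined with the algebraic residue $\epsilon(\sigma_2)(\sigma_1 \cap \sigma_0)$ has absolute value at most $1$. Heuristically, each of the two singularities in $\Sigma$ shared as an endpoint of $\sigma_1$ and $\sigma_2$ should contribute a ``half intersection'' of $\pm\tfrac{1}{2}$, analogous to the standard principle that the algebraic intersection of two relative cycles at a shared boundary point depends on perturbation only through half-integer contributions. With only two singularities available, the total boundary correction is thereby bounded by $1$. I plan to make this rigorous by a short case analysis on the combinatorial pattern of the singular endpoints of $\sigma_1$ and $\sigma_2$ among $\{s_0, s_1\}$, invoking antisymmetry of $\cap$ together with the explicit base computations $\sigma_0 \cap \sigma_j \in \{0, \pm 1\}$ obtained by applying \eqref{eq:extended intersection number} to the generators recorded in \eqref{eq:hol sigma 0} and \eqref{eq:hol sigma k}.
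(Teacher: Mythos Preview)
Your opening observation---that every interior crossing of two saddle connections carries the same sign $\epsilon$, so that it suffices to bound $|\sigma_1 \cap \sigma_2 - N|$---is correct and is exactly the implicit first step in the paper's argument. After that, however, your decomposition via the fixed saddle connection $\sigma_0$ introduces a genuine difficulty that your plan does not resolve.

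When $\epsilon(\sigma_2)=\pm 1$, the absolute class $\bar\sigma_2=\sigma_2\mp\sigma_0$ \emph{cannot} be represented by a loop $\ell$ that ``agrees with $\sigma_2$ outside small disks $U_p$''; any representing loop must also traverse $\mp\sigma_0$. Consequently the signed crossings of $\sigma_1$ with $\ell$ pick up, in addition to $N$ and the local boundary corrections, all interior crossings of $\sigma_1$ with $\sigma_0$. These are supposed to be cancelled by your residual term $\epsilon(\sigma_2)(\sigma_1\cap\sigma_0)$, but that term is the \emph{extended} pairing of \eqref{eq:extended intersection number}, and relating it to the honest interior crossing count is precisely an instance of the lemma you are proving. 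Moreover, your proposed endgame---bounding this residue using the base computations $\sigma_0\cap\sigma_j\in\{0,\pm 1\}$ for the generators $\sigma_j$---does not apply: $\sigma_1$ is an arbitrary saddle connection, not one of the boundary edges $\sigma_j$, and $\sigma_1\cap\sigma_0$ can be arbitrarily large. So the error budget is not under control, and the ``half-intersection'' heuristic is left unjustified.

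The paper avoids this by never introducing an auxiliary third saddle connection. In the case where one of $\sigma_1,\sigma_2$ has both endpoints at the same singularity it closes \emph{that} arc into a loop $\hat\sigma_1$ by an arc on $\partial B_r(s_\ast)$; since the singularity is an infinite cone point, $\partial B_r(s_\ast)$ is a line, and the added arc meets $\sigma_2$ in at most one net crossing. In the remaining case (both $\sigma_i$ join $s_0$ to $s_1$) the paper concatenates $\sigma_1$ with $\sigma_2$ itself to form a loop $\gamma_1$ and uses antisymmetry, $\sigma_1\cap\sigma_2=\gamma_1\cap\sigma_2$, so the only extra crossings come from the two short boundary arcs at $s_0$ and $s_1$, again contributing at most one in total. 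Replacing your $\sigma_0$ by $\sigma_2$ in this way eliminates the unbounded term and makes the case analysis finite.
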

\begin{proof}
There two cases to consider. First suppose one of the curves joins a singularity to itself, say $\sigma_1$ has two endpoints at the singularity $s_\ast$.
Then $\sigma_1$ is really a loop and we can apply a homotopy 
only deforming $\sigma_1$ in a small neighborhood of the $s_\ast$ which makes $\sigma_1$ into a new closed loop $\hat \sigma_1$. We will detail a way to obtain $\hat \sigma_1$ so that it is fairly easy to see what is happening. For $r \in (0,\sqrt{2})$
observe that the ball of radius $r$ about $s_\ast$ contains no complete saddle connections.
By choosing $r \in (0,\sqrt{2})$ sufficiently small we can arrange that the ball $B_r(s_\ast)$ 
only intersects $\sigma_1$ and $\sigma_2$ in segments of length $r$ at the start and end of the saddle connections. (The ball will only intersect $\sigma_2$ if it starts or ends at the same singularity.) Chop off the segments of $\sigma_1$ that are within the ball. Since this singularity is an infinite cone singularity, the boundary $\partial B_r(s_\ast)$ is homeomorphic to the real line, so we can join the points of where $\sigma_1$ hits the ball by a unique arc $\partial B_r(s_\ast)$. Call the resulting loop $\hat \sigma_1$.
The added arc may cross $\sigma_2$ if the saddle connection $\sigma_2$ starts or ends at $s_\ast$.
If $\sigma_2$  has one endpoint at $s_\ast$, we have introduced at most one new crossing so that the result holds in this case.
If $\sigma_2$ both starts and ends at $s_\ast$, then the added arc may cross twice, but if it does then the signs ascribed to the intersections are opposite. Again we have shown that $|\sigma_1 \cap \sigma_2|$ is within one of $i(\sigma_1, \sigma_2)$. 

If we can not arrange to be in the first case, then both $\sigma_1$ and $\sigma_2$ join distinct singularities. Orientation is irrelevant for the statement we are trying to prove, so we can assume $\sigma_1$ moves from singularity $s_0$ to singularity $s_1$ and $\sigma_2$ moves from $s_1$ to $s_0$. Let $\gamma_1$ be the curve formed by concatenating $\sigma_1$ and $\sigma_2$. Because our extended definition of $\cap$ is bilinear and alternating, we have
$$\sigma_1 \cap \sigma_2 = \gamma_1 \cap \sigma_2.$$
We can again make $\gamma_1$ into a closed curve  $\hat \gamma_1$ following the method of the previous paragraph. This time we choose $r$ small enough so that the two balls
$B_r(s_0)$ and $B_r(s_1)$ do not intersect and
so that the balls only intersect the saddle connections in initial and terminal segments.
So $\hat \gamma_1$ follows $\sigma_1$ outside of the two balls then wraps around the boundary of $B_r(s_1)$, then
follows a $\sigma_2$ until it hits $B_r(s_1)$ and closes up following the boundary of $B_r(s_0)$. Actually, it is better not to follow $\sigma_2$ exactly,
instead we follow a parallel arc which stays on one side of $\sigma_2$. Again the arcs in the boundary of the balls may have introduced one or two new crossings,
but if we introduce two then they occur with opposite signs. Again we have shown that $|\sigma_1 \cap \sigma_2|$ is within one of $i(\sigma_1, \sigma_2)$. 
\end{proof}

\subsection{Affine automorphisms}
\label{sect:affine}
The affine automorphism group was investigated carefully in \cite{Higl1}.

\begin{theorem}[Theorem 3 \cite{Higl1}]
For $c \geq 1$, the group $D\big(\Aff(\bP_c)\big)$ is generated by $A_c$, $B_c$, $C_c$ and $-I$ as defined in 
\eqref{eq:involutions}. 
\end{theorem}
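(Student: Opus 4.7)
The plan is to prove the equality $D(\Aff(\bP_c)) = \langle A_c, B_c, C_c, -I \rangle$ by establishing both inclusions separately. For the forward inclusion, I would exhibit each generator as the derivative of an explicit affine automorphism. The derivative $-I$ is realized by the point-reflection of $\R^2$ through the origin: by construction $Q_c^-$ is the image of $Q_c^+$ under this map, and the edge identifications have been arranged to respect it. The derivative $A_c$ is realized by the Euclidean reflection $(x,y) \mapsto (-x,y)$, which preserves each of $Q_c^\pm$; to check this, an induction on $|n|$ from the matrix form of $T_c$ in \eqref{eq:T}, together with the base cases $T_c^{-1}(0,0) = (-1,1)$, $T_c^0(0,0)=(0,0)$, and $T_c(0,0) = (1,1)$, shows that the vertex sequence satisfies $T_c^n(0,0) \mapsto T_c^{-n-1}(0,0)$ under the reflection. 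The product $A_c B_c = \left(\begin{smallmatrix} 1 & -2 \\ 0 & 1 \end{smallmatrix}\right)$ is a horizontal parabolic shear, which I would realize as a simultaneous Dehn multi-twist along the countable family of horizontal cylinders $\sA_i$ of Figure \ref{fig:horizontal}; this requires verifying that the $\sA_i$ have commensurable moduli, which follows by direct calculation from the polynomial-in-$c$ coordinates of $T_c^n(0,0)$. Composing with the $A_c$-automorphism yields an affine automorphism with derivative $B_c$, and an analogous construction using cylinders transverse to $\sigma_0$, combined with an appropriate reflection, produces the automorphism with derivative $C_c$.

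For the reverse inclusion, the main obstacle is that $\bP_c$ has infinite area, so Veech's classical lattice arguments do not apply and we cannot merely invoke discreteness of $D(\Aff(\bP_c))$. My approach is to use that any element of the Veech group must permute the discrete set of saddle-connection holonomy vectors $\hol_c(H_1(\bP_1,\Sigma;\Z)) \subset \R^2$, and in particular must permute the subset of shortest vectors. The holonomies $\hol_c \sigma_0 = (1,1)$ and $\hol_c \sigma_{-1}=(1,-1)$ have length $\sqrt{2}$, and the growth estimates in \eqref{eq:hol sigma k} give $|\hol_c \sigma_k| \to \infty$ as $|k| \to \infty$ for $c \geq 1$; a case check using these estimates should show $\{\pm(1,1),\pm(1,-1)\}$ is the full set of shortest holonomies. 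The stabilizer of this four-element set inside $\SL^\pm(2,\R)$ is a finite dihedral group which is visibly generated by $A_c$ and $-I$, so any derivative $M \in D(\Aff(\bP_c))$ can be multiplied by a suitable element of $\langle A_c, B_c, C_c, -I \rangle$ to bring $M$ into this stabilizer, and a finite further check finishes the argument.

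The hardest part is this reverse inclusion, and in particular arranging the reduction so that after peeling off known generators we actually land in the identity rather than some bounded residual set of matrices. A cleaner alternative is to pass to the hyperbolic picture of Section \ref{sect:subgroup deformation}: for $c \geq 1$ the subgroup $\rho_c(\sG)$ is discrete in $\PGL(2,\R)$, with the reflection axes of $A_c, B_c, C_c$ bounding the (possibly ultra-ideal) triangle $\Delta_c$, and any extra affine automorphism of $\bP_c$ would yield a hyperbolic isometry that moves the tessellation by copies of $\Delta_c$ into itself in a way incompatible with the combinatorial edge data of $\bP_c$ that the Veech group must preserve.
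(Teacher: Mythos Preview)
The paper does not prove this statement; it is quoted as Theorem 3 of the companion article \cite{Higl1} and used as input. There is no proof in the present paper to compare against.

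On your sketch itself: the forward inclusion is essentially right in outline, modulo a small index slip --- the reflection $(x,y)\mapsto(-x,y)$ sends the vertex $T_c^{n}(0,0)$ to $T_c^{-n}(0,0)$, not to $T_c^{-n-1}(0,0)$; it is the \emph{edge} $\sigma_j$ that is carried to $\sigma_{-j-1}$.

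Your reverse inclusion, however, contains a genuine error. Elements of the Veech group are linear maps of determinant $\pm 1$; they do permute the full set of saddle-connection holonomies, but they do \emph{not} preserve Euclidean length and hence need not permute the subset of shortest holonomy vectors. Concretely, $B_c=\left(\begin{smallmatrix}-1&2\\0&1\end{smallmatrix}\right)$ is already in the Veech group and sends the short vector $(1,-1)$ to $(-3,-1)$, which is not short. So the assertion ``any $M$ in the Veech group lies in the order-$8$ dihedral stabilizer of $\{\pm(1,1),\pm(1,-1)\}$'' is false on its face, and the reduction you describe after it never gets started. (Shortest-vector arguments of this type are used for Veech groups, but only after first normalizing by an element of a known subgroup, or after restricting to the $O(2)$ part; neither step is present here.)

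The hyperbolic alternative in your last paragraph is closer in spirit to how such equalities are established --- show the candidate reflection group already tiles $\H^2$ with fundamental domain $\Delta_c$, and that any affine automorphism must respect enough combinatorial data (e.g.\ the cylinder decompositions in the parabolic directions) to force it into that tessellation's symmetry group --- but as written it is a slogan rather than an argument, and would need to be fleshed out substantially.
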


Since $\bP_c$ never has translation automorphisms, we have the following:

\begin{proposition}[Proposition 5 \cite{Higl1}]
\label{prop:D parabolic}
For $c \geq 1$, the homomorphism $D:\Aff(\bP_c) \to \GL(2,\R)$ is one-to-one.
\end{proposition}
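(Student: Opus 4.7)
The kernel of $D$ consists of translation automorphisms of $\bP_c$: affine automorphisms with identity derivative. The plan is to show this kernel is trivial by exploiting the rigidity of the polygonal parabola/hyperbola structure of $\bP_c$, which admits no non-trivial translation symmetries compatible with the edge gluings.

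First, given $\phi \in \Aff(\bP_c)$ with $D\phi = I$, I would lift $\phi$ via the developing map to obtain $\tilde\phi$ on the universal cover satisfying $\dev \circ \tilde\phi = T_v \circ \dev$ for a single vector $v \in \R^2$, well-defined modulo the holonomy subgroup $H \subset \R^2$ generated by holonomies of closed loops in $\bP_c^\circ$. A direct computation using the generators $\gamma_j$ of Proposition \ref{prop: gamma generate}, together with the formula $\hol\,\gamma_j = \widetilde\hol(v_{j+1} + v_j - v_1 - v_0)$ obtained from the construction of Section \ref{sect:construction}, gives for $c = 1$ that $H = 2\Z \times 4\Z$; analogous computations produce $H$ for $c > 1$.

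The core of the argument is to show that the requirement that $\tilde\phi$ descends to an automorphism forces $v \in H$. Since $\tilde\phi$ must permute lifts of singularities, the set $X \subset \R^2$ of developed positions of such lifts must satisfy $T_v(X) = X$. For $c = 1$, $X$ consists of $\{(n, n^2) + h : n \in \Z,\, h \in H\}$ from $Q_1^+$ together with the reflected set from $Q_1^-$. The quadratic growth of the parabola vertices provides asymptotic rigidity: requiring $(n + v_1, n^2 + v_2) \in X$ for all large $n$ forces first $v_1 \in 2\Z$ (matching the horizontal period of $H$), and then $v_2 \in 4\Z$ by a modular-arithmetic analysis modulo $4$ that separates whether images land on upper- or lower-parabola vertex orbits. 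Hence $v \in H$. Once $v \in H$, selecting a deck transformation $\tau$ with holonomy $v$, the map $\tilde\phi \circ \tau^{-1}$ commutes with $\dev$ and so is itself a deck transformation by path-lifting; therefore $\tilde\phi$ is a deck transformation and $\phi = \mathrm{id}$ on $\bP_c$.

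The main obstacle is the modular case analysis in the rigidity step: one must carefully rule out the possibility that a shift with odd horizontal part or with $v_2 \not\in 4\Z$ could map $Q_1^+$-vertices into $Q_1^-$-vertex $H$-orbits in some consistent pattern across all $n$. This requires separate checking for each parity combination of $v_1$ and each residue of $v_2$ modulo $4$, using that the map $n \mapsto (n + v_1, n^2 + v_2)$ must consistently land on the prescribed vertex type. For $c > 1$ the same strategy applies more cleanly: the vertices of $Q_c^+$ grow exponentially with rate governed by the eigenvalues studied in Lemma \ref{lem:eigenvalue}, so rigidity follows from the fact that no non-zero translation can preserve an exponentially-spaced orbit, with the parity argument replaced by this faster growth.
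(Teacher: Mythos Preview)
The paper does not actually prove this statement: it is quoted from \cite{Higl1}, and the only justification given here is the sentence ``Since $\bP_c$ never has translation automorphisms'' immediately preceding the proposition. So the content of the proposition is exactly your first sentence: the kernel of $D$ consists of translation automorphisms, and one must show there are none.

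Your route to ``no translation automorphisms'' is much heavier than what is needed, and it contains a genuine gap. The step ``$\tilde\phi\circ\tau^{-1}$ commutes with $\dev$ and so is itself a deck transformation by path-lifting'' is not correct as stated. Path-lifting/uniqueness applies to the covering projection $\pi:\widetilde{\bP_c}\to\bP_c$, not to the developing map $\dev:\widetilde{\bP_c}\to\R^2$, which is merely a local homeomorphism and is far from injective here. Knowing $\dev\circ\psi=\dev$ only tells you that the fixed-point set of $\psi$ is open and closed; you still have to produce a single fixed point to conclude $\psi=\mathrm{id}$, and your argument does not do this. In effect you have reduced to exactly the statement you are trying to prove (that $\phi$ has a fixed regular point). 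Separately, your description of the developed singularity set $X$ is not quite right: the $\dev$-images of lifts of a given singularity form a single $H$-coset, so $X$ is a union of two $H$-cosets, and the interaction with the $Q_1^-$-vertices under the gluing needs more care than you indicate.

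A far simpler argument, and presumably close to what \cite{Higl1} does, bypasses the developing map entirely. A translation automorphism is an isometry preserving holonomy vectors of saddle connections. On $\bP_c$ the saddle connections $\sigma_0$ and $\sigma_{-1}$ are the unique ones with holonomies $(1,1)$ and $(1,-1)$ (equivalently, look at the horizontal cylinder decomposition of Figure~\ref{fig:horizontal}: the cylinders have pairwise distinct heights and circumferences, so each is individually fixed). Either way one obtains a fixed regular point, and a translation automorphism with a fixed regular point is the identity by the usual clopen argument. This avoids both the modular case analysis and the developing-map issue.
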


As a consequence we see that for each $c \geq 1$, there is a canonical isomorphism
$$\Phi_c: \sG \to \Aff(\bP_c) \quad \text{so that $D \circ \Phi_c(g)=\rho_c(g)$ for all $g \in \sG$,}$$
where $\sG$ and $\rho_c$ are defined as in \eqref{eq:sG} and \eqref{eq:rhoc} respectively.
The actions are essentially the same in a topological sense:

\begin{theorem}[Theorem 7 \cite{Higl1}]
For $c,c'\geq 1$ and any $g \in \sG$, the automorphisms $\Phi_c(g):\bP_c \to \bP_c$ and $\Phi_{c'}(g):\bP_{c'} \to \bP_{c'}$ are the same up to conjugation by the canonical homomorphism $\bP_c \to \bP_{c'}$
and isotopy.
\end{theorem}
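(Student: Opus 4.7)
The strategy is to reduce the statement to a combinatorial comparison on a common triangulation. Fix the triangulation of $\bP_c$ by the saddle connections $\{\sigma_j\}$ together with the horizontal and slope-one saddle connections used in the proof of Proposition \ref{prop:relative generators 1}. Its combinatorial structure---which triangles share which edges and which singularity each vertex represents---is independent of $c$, and the canonical homeomorphism $h_{c,c'}:\bP_c \to \bP_{c'}$ from \S\ref{sect:deformation} can be taken to identify these triangulations combinatorially. Two homeomorphisms of a triangulated surface that induce the same orientation-matching permutation of triangles are isotopic through an isotopy supported in a neighborhood of the triangulation, so it suffices to show that for each generator $g$ of $\sG$ the combinatorial action of $\Phi_c(g)$ on this triangulation is independent of $c \geq 1$.

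For the three generators whose matrix representative is $c$-independent---namely $-I$, $A_c$, and $B_c$---the check is direct. For example, $\Phi_c(g_A)$ is reflection across the $y$-axis; an induction on $|n|$ using \eqref{eq:T} shows that $A_c \cdot v_n^+ = v_{-n}^+$, where $v_n^+ = T_c^n(0,0)$, so $\Phi_c(g_A)$ permutes the vertices of $Q_c^+$ by $v_n^+ \mapsto v_{-n}^+$ for every $c \geq 1$. Analogous computations handle $\Phi_c(g_B)$ and the generator whose matrix is $-I$ (which exchanges $Q_c^+$ with $Q_c^-$ via rotation by $\pi$ about the origin).

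The critical case is $\Phi_c(g_C)$, whose derivative $C_c$ varies with $c$. The key algebraic identity is
$$C_c \cdot A_c = \begin{pmatrix} c & c-1 \\ c+1 & c \end{pmatrix},$$
which is the linear part of the vertex-shift $T_c$ from \eqref{eq:T}. As an affine map of the plane, therefore, $(x,y) \mapsto C_c \cdot (x,y) + (1,1)$ coincides with $T_c \circ A_c$ and is an involution (since $(1,1)$ lies in the kernel of $C_c + I$, which one checks directly). This map sends $v_n^+$ to $T_c(A_c v_n^+) = T_c(v_{-n}^+) = v_{1-n}^+$, so the combinatorial action of $\Phi_c(g_C)$ on the vertices of $Q_c^+$ is $v_n^+ \mapsto v_{1-n}^+$, again independently of $c$.

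Once all four generators are handled, the combinatorial action of $\Phi_c(g)$ for any $g \in \sG$ is the product of the actions of its letters and is therefore $c$-independent. Consequently $\Phi_c(g)$ and $h_{c,c'}^{-1} \circ \Phi_{c'}(g) \circ h_{c,c'}$ induce the same combinatorial action on the triangulation of $\bP_c$, and both preserve orientation on each individual triangle (being built from real affine maps that take triangles to triangles), so a triangle-by-triangle isotopy produces the required isotopy between them. The main obstacle is the verification for $g_C$: one must carefully track the translation constant so that the planar factorization $\Phi_c(g_C) \sim T_c \circ A_c$ is compatible with the surface identifications, and confirm that the identified vertex permutations actually determine the combinatorial action on the full triangulation (not only on the vertex set).
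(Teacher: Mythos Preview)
The paper does not give its own proof of this statement: it is quoted as Theorem~7 of \cite{Higl1} and invoked as a black box, so there is no in-paper argument to compare against.

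Your sketch is a plausible line of attack and the central computation is correct and useful: the identity $C_c A_c = DT_c$ does force the affine involution $x \mapsto C_c x + (1,1)$ to act on the vertex string of $Q_c^+$ by the $c$-independent rule $v_n^+ \mapsto v_{1-n}^+$. That is exactly the combinatorial invariance one wants. However, as written the argument has a real gap you partly flag yourself. You only analyze the action on $Q_c^+$. The surface is $Q_c^+ \cup Q_c^-$ with boundary identifications, and an affine automorphism need not be realized by a single planar affine map on the union; in particular you have not established whether $\Phi_c(g_C)$ preserves each polygon or swaps them, nor that the planar map you wrote down is compatible with the edge gluings $\sigma_j \leftrightarrow -\sigma_j$. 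The same issue arises for $B_c$, which you pass over with ``analogous computations.'' Until you verify, for each generator, the full combinatorial action on the triangulated surface (both pieces and the gluings), the isotopy conclusion is not yet justified.

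A smaller point: your isotopy principle (``same orientation-matching permutation of triangles implies isotopic'') is fine here because both maps can be taken simplicial on the common triangulation and then compared triangle by triangle via the straight-line homotopy, but you should say this rather than appeal to a general statement about homeomorphisms. And knowing the vertex permutation determines the simplicial map only because distinct triangles of this particular triangulation have distinct vertex sets; that deserves one sentence.
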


This has the following consequence for our deformation holonomy map.

\begin{proposition}
\label{prop:affine action parabolic} 
Let $\phi=\Phi_1(g) \in \Aff(\bP_1)$. 
Then the induced actions of $\phi$ on $H_1(\bP_1^\circ; \Z)$ and $H_1(\bP_1, \Sigma; \Z)$ satisfy
$$\widetilde \hol \circ \phi(\gamma) = \rho(g)\, \widetilde \hol(\gamma)$$
where $\rho:\sG \to \SL^\pm(2,\Z[c])$ was defined in \eqref{eq:rho}.
\end{proposition}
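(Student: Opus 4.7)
The plan is to exploit the polynomial-in-$c$ behavior of the deformation holonomy map together with what we already know about the affine automorphism $\Phi_c(g) \in \Aff(\bP_c)$ for each $c \geq 1$, then pull information back to $\bP_1$ via the canonical homeomorphism from \cite{Higl1}. Fix $g \in \sG$ and write $\phi = \Phi_1(g)$ and, for each $c \geq 1$, $\phi_c = \Phi_c(g)$, so that $D\phi_c = \rho_c(g)$ by construction.

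The first step is to apply the standard identity for holonomy of an affine automorphism recalled in \S\ref{sect:background}, namely $\hol\, f_\ast(\tau) = Df \cdot \hol(\tau)$, to each $\phi_c$. This yields $\hol_{\bP_c}(\phi_c)_\ast(\tau) = \rho_c(g)\,\hol_{\bP_c}(\tau)$ for every class $\tau$ on $\bP_c$. The second step is to invoke Theorem 7 of \cite{Higl1}, which says that the canonical homeomorphism $h_c:\bP_1 \to \bP_c$ conjugates $\phi$ to $\phi_c$ up to isotopy. Since isotopic homeomorphisms act identically on homology, we have $(h_c)_\ast \circ \phi_\ast = (\phi_c)_\ast \circ (h_c)_\ast$.

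Combining these two ingredients with the definition $\widetilde\hol(\sigma)(c) = \hol_{\bP_c}\bigl((h_c)_\ast\sigma\bigr)$ gives, for every $\sigma \in H_1(\bP_1, \Sigma; \Z)$ and every $c \geq 1$,
$$\widetilde\hol(\phi_\ast\sigma)(c) = \hol_{\bP_c}\bigl((\phi_c)_\ast (h_c)_\ast \sigma\bigr) = \rho_c(g)\,\hol_{\bP_c}\bigl((h_c)_\ast\sigma\bigr) = \bigl(\rho(g)\,\widetilde\hol(\sigma)\bigr)(c).$$
Both sides are elements of $\Z[c]^2$ and agree on the infinite set $[1,\infty)$, so they coincide as polynomials. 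For the statement on $H_1(\bP_1^\circ;\Z)$, observe that $\phi$ permutes $\Sigma$ and hence preserves $\bP_1^\circ$; the inclusion $\iota_\ast: H_1(\bP_1^\circ;\Z) \to H_1(\bP_1,\Sigma;\Z)$ is $\phi$-equivariant and $\widetilde\hol$ factors through $\iota_\ast$, so the identity transfers.

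There is no serious obstacle here; the only subtlety is bookkeeping with the canonical identifications, namely verifying that the abuse in writing $\widetilde\hol$ for both $\bP_1$-classes and their images under $h_c$ is consistent with the definition and that Theorem 7 of \cite{Higl1} really yields equality of the induced maps on homology (which it does, since isotopy implies homotopy and homology is homotopy-invariant).
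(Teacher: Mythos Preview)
Your proof is correct and follows essentially the same approach as the paper: verify the identity at each $c \geq 1$ using $\hol \circ f_\ast = Df \cdot \hol$ together with the topological conjugacy from \cite{Higl1}, then conclude by observing that two elements of $\Z[c]^2$ agreeing on infinitely many values of $c$ must coincide as polynomials. Your version is slightly more explicit about the role of the canonical homeomorphism $h_c$ and the passage to $H_1(\bP_1^\circ;\Z)$ via $\iota_\ast$, but the underlying argument is the same.
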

\begin{proof}
Fixing a $c \geq 1$, we see that $\hol_c \circ \Phi_c(g)=\rho_c(g) \cdot \hol_c(\gamma)$ since $D \circ \Phi_c(g)=\rho_c(g)$. This expression represents
the equation in the proposition evaluated at a specific $c \geq 1$, but we have verified it for uncountably many values (all $c \geq 1$). Fixing any $\gamma$, the expression claims equality of two elements of $\Z[c]^2$. The entries are polynomial and we have verified the equation on infinitely many values, so the equation holds for all $c$.
\end{proof}

\subsection{Asymptotics of algebraic intersections}
\label{sect:algebraic}

In this subsection we state our main result involving asymptotic algebraic intersection numbers of
homology classes. The main ideas involve using $\widetilde \hol$ to convert the homology classes
to elements of $\R[c]^2$, use Proposition \ref{prop:affine action parabolic}  to convert an affine automorphism's action to the action of an element $\rho(g) \in \SL^\pm(2,\Z[c])$, and to use our integral formula to evaluate the intersection numbers.

It is useful to notice that since $\rho_1(g)$ is hyperbolic, the matrices $\rho_c(g)$ are diagonalizable 
in a neighborhood of $c=1$. We can express this diagonalization in the form 
\begin{equation}
\label{eq:local diagonalization}
\rho_c(g)=\lambda^u_c P^u_c+\lambda^s_c P^s_c
\end{equation}
where $\lambda^u_c$ and $\lambda^s_c$ are analytic real valued functions of $c$ corresponding
to the expanding and contracting eigenvalues of $\rho_c(g)$ and $P^u_c$ denote the projection matrices (matrices with one eigenvalue equal one and one equal zero)
with the same eigenvectors as $\rho_c(g)$. Again we just interpret these quantities as defined and analytic in a neighborhood of $c=1$. 

Let $\v,\bw \in \R[c]^2$. We define
\begin{equation}
\label{eq:barwedge}
\v \barwedge \bw = (P^u_c \v) \wedge \bw
\end{equation}
which is a real valued analytic function defined in a neighborhood of $c=1$.  This quantity relates
to constants in our asymptotics and it will be important to know that this function can not be 
identically zero.

\begin{lemma}
\label{lem: barwedge}
Let $\rho_1(g)$ be hyperbolic and define $P^u_c$ and $\barwedge$ as above. Then so long as $\v,\bw \in \R[c]^2$ are both non-zero, the real analytic function $\v \barwedge \bw$ is not identically zero.
\end{lemma}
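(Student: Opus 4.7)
The plan is to argue by contradiction: assume $\v \barwedge \bw \equiv 0$ on a neighborhood of $c=1$, and then derive that both eigenvalues of $\rho_c(g)$ must be constant as functions of $c$, contradicting Lemma \ref{lem:eigenvalue}(2). The first step is to unpack what the vanishing means. Since $P^u_c$ is a rank-one projection whose image is the unstable eigenline of $\rho_c(g)$ and whose kernel is the stable eigenline, the condition $(P^u_c \v(c)) \wedge \bw(c) = 0$ says that, at each $c$ near $1$, either $\v(c)$ lies on the stable eigenline of $\rho_c(g)$ or $\bw(c)$ lies on the unstable eigenline. Using the analyticity of $P^u_c \v$ and of $\bw$ on a neighborhood of $c=1$, I can then force one of these two alternatives to hold on an entire open subinterval.

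Next I will handle the case in which $\v(c)$ lies on the stable eigenline of $\rho_c(g)$ on an open set; the case involving $\bw$ and the unstable eigenline is symmetric and gives the same kind of contradiction with $\lambda^u_c$ playing the role of $\lambda^s_c$. Under this hypothesis $\rho_c(g)\v(c) = \lambda^s_c \v(c)$. Since $\rho_c(g)\v(c)$ and the coordinates of the nonzero polynomial vector $\v(c)$ lie in $\R[c]$, the analytic function $\lambda^s_c$ agrees with the ratio of two polynomials off the zero set of some coordinate of $\v$, and hence extends to a rational function of $c$. On the other hand, $\lambda^s_c$ is a root of the monic characteristic polynomial $\lambda^2 - \textrm{tr}(\rho_c(g))\,\lambda + \det(\rho_c(g)) = 0$; each generator in \eqref{eq:involutions} has determinant $\pm 1$, so $\det(\rho_c(g))$ is a $c$-independent sign and the full characteristic polynomial lies in $\R[c][\lambda]$. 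Because $\R[c]$ is integrally closed in $\R(c)$, I conclude $\lambda^s_c \in \R[c]$.

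To finish, $\lambda^u_c = \textrm{tr}(\rho_c(g)) - \lambda^s_c$ is then also in $\R[c]$, and $\lambda^u_c \lambda^s_c = \det(\rho_c(g)) = \pm 1$ forces both polynomial factors to be nonzero constants, so in particular $\lambda^u_c$ is constant in $c$ on a neighborhood of $1$. This contradicts the strict inequality $\bigl[\tfrac{d}{dc}\lambda^u_c\bigr]_{c=1}>0$ supplied by Lemma \ref{lem:eigenvalue}(2). The step I expect to demand the most care is the rationality-to-polynomiality jump for $\lambda^s_c$: I need to confirm that $\det \rho_c(g)$ really is a $c$-independent sign for every $g \in \sG$ (as opposed to merely for the generators), and that the locally analytic function $\lambda^s_c$ genuinely extends to the rational function it matches on a dense open subset, so that the integrality argument in the UFD $\R[c]$ applies cleanly.
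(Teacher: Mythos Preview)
Your proposal is correct and follows the same overall strategy as the paper: assume $\v \barwedge \bw \equiv 0$, deduce that one of $\v, \bw$ lies identically on an eigenline of $\rho_c(g)$, conclude that an eigenvalue is a rational function of $c$, and contradict Lemma~\ref{lem:eigenvalue}(2).

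The difference lies only in the final algebraic step. The paper writes $\lambda^s_c = p/q$ in lowest terms and, using the trace, derives the identity $p^2 \pm q^2 = t\,p\,q$; it then observes that any complex root of $p$ would have to be a root of $q$ as well, forcing both to be constant. You instead invoke the integral closure of $\R[c]$ in $\R(c)$: since $\lambda^s_c$ is rational and satisfies the monic characteristic polynomial over $\R[c]$, it must lie in $\R[c]$, and then $\lambda^u_c \lambda^s_c = \pm 1$ forces both polynomial eigenvalues to be constants. Your route is slightly more conceptual and avoids the explicit identity; the paper's is more self-contained. Both arrive at the same contradiction with $[\tfrac{d}{dc}\lambda^u_c]_{c=1}>0$.

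One minor clean-up: the paper makes the opening dichotomy sharper by first noting $(P^u_c \v)\wedge \bw = (P^u_c \v)\wedge(P^s_c \bw)$ (from $P^u_c + P^s_c = I$ and the vanishing of $(P^u_c\v)\wedge(P^u_c\bw)$). Since the two factors lie in transverse eigenlines, their wedge vanishes identically only if one factor is identically zero, giving the dichotomy on the whole neighborhood at once. Your analyticity argument reaches the same conclusion on an open subinterval, and the identity theorem then extends constancy of $\lambda^u_c$ back to $c=1$; you may want to make that extension step explicit. Your concern about $\det \rho_c(g)$ is easily resolved: each generator in \eqref{eq:involutions} has determinant $\pm 1$ independent of $c$, and the determinant is multiplicative.
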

\begin{proof}
First observe that $P^u_c+P^s_c=I$ so that 
$$\v \barwedge \bw = (P^u_c \v) \wedge (P^u_c \bw+P^s_c \bw)= (P^u_c \v) \wedge (P^s_c \bw).$$
Since the functions are analytic and the eigendirections are transverse, the only way $\v \barwedge \bw$ could be identically zero is if either $P^u_c \v$ was identically zero or $P^s_c \bw$ was identically zero.

We will approach this by contradiction and without loss of generality we may assume $P^u_c \v \equiv 0$.
This says that $\v(c)$ lies in the stable eigenspace of $\rho_c(g)$ for all $c$ sufficiently close to one.
That is,
$$\rho_c(g) \v=\lambda_c^s \v.$$
The entries of both $\rho_c(g) \v$ and $\v$ lie in $\R[c]$. Thus it follows that $\lambda_c^s$ is a rational function, i.e., there are polynomials $p,q \in \R[c]$ so that $\lambda_c^s=\frac{p}{q}$ in a neighborhood of $c=1$ wherever $q(c)\neq 0$ (which holds an open set since $\v$ has non-zero polynomial entries).
Furthermore, we can assume that $\frac{p}{q}$ is reduced in the sense that
$p$ and $q$ 
share no roots in $\C$. 
Since $\det \rho_c(g) \equiv \pm 1$, we know that $\lambda_c^u=\pm \frac{q}{p}$. The sum of the eigenvalues is the trace of $\rho_c(g)$ which we will denote by $t \in \R[c]$. Thus we have the identity
\begin{equation}
\label{eq:sum of squares}
p^2 \pm q^2 = t p q.
\end{equation}
By Lemma \ref{lem:eigenvalue}, we know $\frac{d}{dc} \lambda^u_c|_{c=1} \neq 0$, so that at least one of $p$ and $q$ is non-constant. This means that one of those, say $p$, two has a root $z \in \C$. Then $z$ is also a root of $tpq$. Thus it follows from \eqref{eq:sum of squares} that $z$ must be a root of $q^2$ and therefore also of $q$, but this contradicts our assumption that $p$ and $q$ do not share a common root.
\end{proof}

The following is our main technical result. It describes the asymptotics of homology classes under a hyperbolic affine automorphism.

\begin{theorem}
\label{thm:parabolic intersections}
Suppose $\phi:\bP_1 \to \bP_1$ is hyperbolic affine automorphism with derivative $D \phi=\rho_1(g)$.
Let $\gamma \in H_1(\bP_1^\circ; \R)$ and $\sigma \in H_1(\bP_1, \Sigma; \R)$ be non-zero classes. Define quantities as above in \eqref{eq:local diagonalization} and \eqref{eq:barwedge}.
Let $k \geq 0$ be an integer and $\kappa \in \R$ be nonzero so that the Taylor expansion of $\widetilde \hol \gamma \barwedge \widetilde \hol \sigma$ about $c=1$ is of the form
$$\widetilde \hol \gamma \barwedge \widetilde \hol \sigma = \kappa (c-1)^k+O\big((c-1)^{k+1}\big).$$
(Note that this quantity is not identically zero by Lemma \ref{lem: barwedge}.)
Then the sequence of algebraic intersection numbers $\phi^n_\ast(\gamma) \cap \sigma$ is asymptotic to a constant times $n^{-k-\frac{3}{2}} (\lambda^u_1)^n$, and in fact:
$$\lim_{n \to \infty} \frac{n^{k+\frac{3}{2}}}{(\lambda_1^u)^n} \big(\phi^n_\ast(\gamma) \cap \sigma\big) =
\frac{(-1)^k \Gamma(k+\frac{3}{2}) \kappa \sqrt{2}}{4 \pi \beta^{k+\frac{3}{2}}}$$
where $\Gamma$ denotes the gamma function and $\beta=\frac{1}{\lambda_1^u} [{\textstyle \frac{d}{dc}} \lambda_c^u]_{c=1}$ 
as in Theorem \ref{thm:mixing} of the introduction.
This result also holds for $\gamma, \sigma \in H_1(\bP_1, \Sigma; \R)$ with algebraic intersection numbers computed as in \eqref{eq:extended intersection number}.
\end{theorem}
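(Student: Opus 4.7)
The plan is to combine Proposition \ref{prop:affine action parabolic} with the integral formula of Lemma \ref{lem:intersection parabola}, which together express
\[
\phi^n_\ast(\gamma) \cap \sigma \;=\; \frac{1}{2\pi}\int_0^\pi \bigl(\rho_c(g)^n \widetilde\hol(\gamma)\bigr) \wedge \widetilde\hol(\sigma)\, (1-\cos t)\, dt
\]
with $c = \cos t$. The key idea is that the integrand concentrates near $t=0$ (i.e.\ $c=1$) and is amenable to a Laplace-style analysis, with the contribution away from $t=0$ being controlled by Proposition \ref{prop:eigenvalue}.

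First I would fix a small $\delta>0$ and split the integral at $t=\delta$. For $t \in [\delta,\pi]$, Proposition \ref{prop:eigenvalue} applied with $c_0 = \cos\delta$ furnishes $\xi \in (1,\lambda_1^u)$ such that $\|\rho_c(g)^n\| = o(\xi^n)$ uniformly on $[-1,c_0]$, so this piece of the integral is $O(\xi^n)$, which is exponentially smaller than the target rate $(\lambda_1^u)^n n^{-k-3/2}$. For $t \in [0,\delta]$ with $\delta$ chosen small enough that $\rho_c(g)$ remains hyperbolic (hence diagonalizable with real eigenvalues), I apply the splitting \eqref{eq:local diagonalization} to get
\[
\bigl(\rho_c(g)^n \widetilde\hol\,\gamma\bigr)\wedge \widetilde\hol\,\sigma
= (\lambda_c^u)^n \bigl(\widetilde\hol\,\gamma \barwedge \widetilde\hol\,\sigma\bigr) + (\lambda_c^s)^n \bigl(P^s_c\widetilde\hol\,\gamma\bigr)\wedge\widetilde\hol\,\sigma.
\]
Since $|\lambda_c^s|<1$ on $[\cos\delta,1]$ after shrinking $\delta$, the second term contributes only $O(1)$ and is negligible.

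To handle the leading term I would Taylor expand about $c=1$: by Lemma \ref{lem:eigenvalue}, $\lambda_c^u = \lambda_1^u\bigl(1 + \beta(c-1) + O((c-1)^2)\bigr)$ with $\beta>0$; by hypothesis $\widetilde\hol\,\gamma \barwedge \widetilde\hol\,\sigma = \kappa(c-1)^k + O((c-1)^{k+1})$; and $c-1 = -t^2/2 + O(t^4)$, $1-\cos t = t^2/2 + O(t^4)$. Under the rescaling $u = t\sqrt{n\beta}$ (so $dt = du/\sqrt{n\beta}$), a direct computation gives
\[
n\log\bigl(\lambda_c^u/\lambda_1^u\bigr) = -u^2/2 + O(u^4/n),
\]
and the leading contribution of the integrand times $dt$ becomes
\[
(\lambda_1^u)^n\,\frac{(-1)^k\kappa}{2^{k+1}(n\beta)^{k+3/2}}\, u^{2k+2} e^{-u^2/2}\, du,
\]
with all correction factors tending to $1$ uniformly on compact $u$-intervals. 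As $n\to\infty$ the upper limit $\delta\sqrt{n\beta}$ tends to infinity and, using $\int_0^\infty u^{2k+2} e^{-u^2/2}\, du = 2^{k+1/2}\Gamma(k+3/2)$, together with the prefactor $\frac{1}{2\pi}$ and the identity $\frac{1}{2\pi\sqrt{2}} = \frac{\sqrt{2}}{4\pi}$, I recover the claimed constant.

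The extension to $\gamma, \sigma \in H_1(\bP_1, \Sigma; \R)$ is automatic because \eqref{eq:extended intersection number} is literally the same integral. The principal technical obstacle is justifying the passage to the limit inside the rescaled integral on $[0, \delta\sqrt{n\beta}]$: one needs uniform bounds on the higher-order Taylor remainders so that the integrand is dominated by an integrable Gaussian-type function such as $C u^{2k+2} e^{-u^2/4}$, after which dominated convergence produces the stated asymptotic. Once $\delta$ is fixed, however, this is routine bookkeeping on analytic remainders.
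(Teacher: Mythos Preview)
Your proof is correct and follows essentially the same route as the paper: the same integral expression via Proposition~\ref{prop:affine action parabolic} and Lemma~\ref{lem:intersection parabola}, the same splitting at a small $\delta$ with Proposition~\ref{prop:eigenvalue} handling the tail, and the same stable/unstable decomposition on $[0,\delta]$. The one difference is in the final step: where you carry out the Laplace method by hand via the rescaling $u=t\sqrt{n\beta}$ and dominated convergence, the paper instead packages the same computation as an application of Erd\'elyi's theorem \cite[Theorem~8.1]{Olver14} on integrals $\int_0^\epsilon e^{-np(t)}q(t)\,dt$, with $p(t)=-\ln|\lambda_c^u|$ and $q(t)=(\bv_c\barwedge\bw_c)(1-\cos t)$.
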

\begin{proof}
Fix $\gamma$ and $\sigma$. We will compute intersections using the integral in Lemma \ref{lem:eigenvalue} with this being the definition in
the case $\gamma, \sigma \in H_1(\bP_1, \Sigma; \R)$.
Let $c=\cos t$ throughout this proof.
Let $\bv_c = \widetilde \hol~\gamma$ and $\bw_c = \widetilde \hol~\sigma$ which are both elements of $\R[c]^2$. Determine $k$ and $\kappa$ as stated in the theorem.
By our integral formula for intersection numbers and Proposition \ref{prop:affine action parabolic},
$$\phi_\ast^n(\gamma) \cap \sigma = \frac{1}{2\pi} \int_0^\pi \big( (\rho_c(g)^n \bv_c) \wedge \bw_c \big) (1-\cos t)~dt.$$
Here, the quantity $(\rho_c(g)^n \bv_c) \wedge \bw_c$ lies in $\R[c]$ and we integrate with respect to $t$ while taking $c=\cos t$. 

We will be demonstrating that 
\begin{equation}
\label{eq:parabolic goal 1}
\frac{n^{k+\frac{3}{2}}}{2\pi (\lambda^u_1)^{n}} \int_0^\pi \Big(\big( \rho_c(g)^n \bv_c\big) \wedge \bw_c \Big) (1-\cos t)~dt
\end{equation}
is asymptotic to the quantity on the right side of the equation in the theorem.

Let $\lambda^u_c$ be an eigenvalue for $\rho_c(g)$ realizing the maximum absolute value of an eigenvalue. 
From Lemma \ref{lem:eigenvalue} 
we know that $\frac{d}{dc} |\lambda^u_c|\big|_{c=1}>0$ and thus we can find an interval $[0,\epsilon]$ on which
$t \mapsto |\lambda^u_{\cos t}|$ is decreasing and takes values larger than one. We can split the integral of 
\eqref{eq:parabolic goal 1} into two pieces at $\epsilon$. The contribution of the interval $[\epsilon,\pi]$ to 
\eqref{eq:parabolic goal 1} can be written as
\begin{equation}
\label{eq:parabolic goal 2}
\frac{n^{k+\frac{3}{2}}}{2\pi \left(\frac{\lambda^u_1}{\xi}\right)^n} 
\int_\epsilon^\pi \Big( \xi^{-n} \rho_c(g)^n \bv_c\big) \wedge \bw_c \Big) (1-\cos t)~dt,
\end{equation}
where $\xi$ is some quantity so that $1<\xi<|\lambda^u_1|$ so that $\xi^{-n} \|\rho_c(g)^n\|$ tends to zero uniformly for $t \in [\epsilon,\pi]$
as obtainable from Proposition \ref{prop:eigenvalue}.
Observe that the fraction in front of \eqref{eq:parabolic goal 2} decays to zero because of the exponential growth of $\left(\frac{\lambda^u_1}{\xi}\right)^n$.
The integral in \eqref{eq:parabolic goal 2} also tends to zero because $\bv_c$, $\bw_c$ and $1-\cos t$ are continuous in $t$ and therefore bounded in absolute value by a constant. On the other hand, from our use of Proposition \ref{prop:eigenvalue}, the quantity $\xi^{-n} \rho_c(g)^n$ decays to zero uniformly.

Now consider the interval $[0,\epsilon]$. As in \eqref{eq:local diagonalization}, on this interval we can write
$$\rho_c(g)^n = (\lambda_c^u)^n P_c^u + (\lambda_c^s)^n P_c^s \quad \text{for $n \geq 0$.}$$
Thus we can split the contribution of $[0,\epsilon]$ to \eqref{eq:parabolic goal 1} into unstable and stable parts. The stable part has
the form
$$
\frac{n^{k+\frac{3}{2}}}{2\pi (\lambda^u_1)^{n}}
\int_0^\epsilon \Big(\big( (\lambda_c^s)^n P_c^s  \bv_c\big) \wedge \bw_c \Big) (1-\cos t)~dt,
$$
which tends to zero exponentially since $|\lambda_c^s|=|\lambda_c^u|^{-1}<1$ so that the integral is uniformly bounded
while the fraction in front decays exponentially. 

The unstable part is more interesting and using the fact that $\lambda_c^u$ has constant sign for $t \in [0,\epsilon]$
we can write it as
\begin{equation}
\label{eq:unstable part}
\frac{n^{k+\frac{3}{2}}}{2\pi |\lambda^u_1|^{n}}
\int_0^\epsilon |\lambda_c^u|^n
(\bv_c \barwedge \bw_c ) (1-\cos t)~dt
\end{equation}
where $\barwedge$ is used as in \eqref{eq:barwedge}.
We will use a theorem of Erd\'elyi following \cite[Ch. 3 \S 8]{Olver14} to study the asymptotics of the sequence of integrals (temporarily ignoring the fraction in front). The integral can be written in the form 
$$\int_0^\epsilon e^{-n p(t)} q(t)~dt \quad \text{where} \quad p(t)=-\ln |\lambda^u_c| \quad \text{and} \quad q(t)=(\bv_c\barwedge \bw_c) (1-\cos t).$$
To apply Erd\'elyi's theorem we need to know the first few terms of a series expansion for $p(t)$ and $q(t)$ about $t=0$. By the chain rule,
in a neighborhood of $t=0$ we have 
$$p'(t)=\frac{-1}{|\lambda^u_{c}|} \left(\frac{d}{dc} |\lambda_c^u|\right)(-\sin t).$$
Thus we have $p(0)=-\ln |\lambda_1^u|$, $p'(0)=0$, and  
$p''(0)=\frac{1}{|\lambda_1^u|} \cdot [\frac{d}{dc} |\lambda_c^u|]_{c=1}=\beta,$
which is positive by statement (2) of Lemma \ref{lem:eigenvalue}.
In addition, we know that $t=0$ is the location of the minimum of $p(t)$ on $[0,\epsilon]$. 
Recalling that a Taylor series expansion for $\bv_c\barwedge \bw_c= \widetilde \hol~\gamma \barwedge \widetilde \hol~\sigma$ in terms of $c$ was given as a hypothesis of the Theorem, 
we have
$$q(t)=\frac{(-1)^k \kappa}{2^{k+1}} t^{2k+2} + O(t^{2k+4}).$$
By \cite[Theorem 8.1]{Olver14}, the sequence of values of integrals has the asymptotic form
$$e^{-n p(0)}\left[\Gamma\Big(k+\frac{3}{2}\Big) \cdot n^{-k-\frac{3}{2}} \cdot \frac{(-1)^k \kappa}{2^{k+1}} \cdot \frac{1}{2} \cdot \Big(\frac{p''(0)}{2}\Big)^{-k-\frac{3}{2}}+O(n^{-k-2})\right].$$
Plugging these values in for the integral in \eqref{eq:unstable part}, we obtain the limit stated in the theorem.
\end{proof}

\begin{proof}[Proof of Theorem \ref{thm:homology}]
It suffices to prove the second statement, since the statement on convergence of projective classes to $P(\mu^u)$ and $P(\mu^s)$ follows from statement (3) in the theorem.

Fix $\phi$ and notation as above. Given a non-zero $\gamma \in H_1(S,\Sigma; \R)$ observe that 
$P_c^u\, \widetilde \hol \gamma$
has coordinates which are real analytic functions of $c$. As a consequence of Lemma \ref{lem: barwedge}, 
$P_c^u\, \widetilde \hol \gamma$ is not identically zero. Then by analyticity, the following constant is defined 
\begin{equation}
\label{eq:j}
k = \min \left\{k \in \Z: \quad k \geq 0 \quad \text{and}\quad \big[\frac{d^k}{d c^k} P_c^u \widetilde \hol \gamma\big]_{c=1} \neq \0\right\}.
\end{equation}

The value of $k$ can be interpreted as a function $k:H_1(S,\Sigma; \R) \to \Z \cup \{+\infty\}$ where we assign $k(\gamma)$ as
in \eqref{eq:j} except if $\gamma=\0$ in which case we assign $+\infty$. Then we can define the subsets in the theorem to be 
$S_j=\{\gamma:~k(\gamma) \geq j\}$. Fixing $k$ we observe that the map $D_k: \gamma \mapsto \frac{d^k}{d c^k} P_c^u \widetilde \hol \gamma$
is linear and consequently the $S_j$ are subspaces and each $S_{j+1}$ is codimension at most one in $S_j$. 
Recalling Proposition \ref{prop:isomorphism 2}, we see that we can construct pairs of polynomials $(p,q)$ in the image of $\widetilde \hol\big(H_1(S,\Sigma; \R)\big)$
so that the minimal $k \geq 0$ such that $[\frac{d^k}{d c^k} P_c^u (p,q)]_{c=1} \neq \0$
is arbitrary: Such $\big(p(c),q(c)\big)$ need to approximate the stable direction of $\rho_c(g)$ at $c=1$
to order $k-1$ but not order $k$. It follows that $S_{j+1}$ is always codimension one in $S_j$. Finally because of the previous paragraph we know that if $\gamma \neq \0$
then $k(\gamma) \neq 0$ so that $\bigcap_{j \geq 0} S_j=\{\0\}.$ We have proved statements (1) and (2) of the theorem.

To see (3) fix a non-zero $\gamma$ and let $k$ be as in \eqref{eq:j}. We need to show that there is a constant $L$ so that for any $\sigma \in H_1(S,\Sigma; \R)$,
$$\lim_{n \to +\infty} \frac{n^{k+\frac{3}{2}}}{(\lambda^u_1)^n} \big(\phi^n(\gamma) \cap \sigma\big)=L \mu^u(\sigma),$$
and by definition $\mu^u(\sigma)=\u^u_1 \wedge \hol_1(\sigma)$ where $\u^u$ denotes a choice of a unit unstable eigenvector of $\rho_1(g)$.
It suffices to prove this for elements of the form $\sigma_l$ which generate $H_1(S; \Sigma; \R)$ by Proposition \ref{prop: gamma generate}.
Theorem \ref{thm:parabolic intersections} tells us that
\begin{equation}
\label{eq:lim}
\lim_{n \to +\infty} \frac{n^{k+\frac{3}{2}}}{(\lambda^u_1)^n} \big(\phi^n(\gamma) \cap \sigma\big) = \frac{(-1)^k \Gamma(k+\frac{3}{2}) \kappa \sqrt{2}}{4 \pi \beta^{k+\frac{3}{2}}}
\end{equation}
with $\kappa$ chosen so that 
$$\widetilde \hol \gamma \barwedge \widetilde \hol \sigma = \kappa (c-1)^k+O\big((c-1)^{k+1}\big).$$
Let $\bu^u_c$ denote the continuous choice of unstable unit eigenvectors of $\rho_c(g)$ defined in a neighborhood of $c=1$,
which extends the choice of $\bu^u_1$ used in the definition of $\mu^u$.
Then we can define the real-analytic function $f(c)$ so that
$$\widetilde \hol \gamma \barwedge \widetilde \hol \sigma_l = f(c)\, (\bu^u_c \wedge \widetilde \hol \sigma_l).$$
Here $f(c)=\pm \|P_c^u \widetilde \hol \gamma\|$ and may be changing signs as $c$ passes through $1$. To compute $\kappa$ we just need to look at the lowest order terms in this expression computed at the point $c=1$. Observe that $\hol_1 \sigma_l \in \Z^2$
while $\bu^u_1$ has quadratic irrational slope so we know that $\bu^u_1 \wedge \hol_1 \sigma_l$ is a non-zero constant.
On the other hand using our definition of $k$ we know that $\widetilde \hol \gamma \barwedge \widetilde \hol \sigma_l$ vanishes to order $k-1$ and so
we must have $f = \alpha (c-1)^k + O((c-1)^{k+1})$ and with this definition of $\alpha$ we have
$\kappa = \alpha (\bu^u_1 \wedge \hol_1 \sigma_l) = \alpha \mu^u(\sigma_l)$ so we see by plugging in to \eqref{eq:lim} that
$$
\lim_{n \to +\infty} \frac{n^{k+\frac{3}{2}}}{|\lambda^u_1|^n} \big(\phi^n(\gamma) \cap \sigma_l\big) = \frac{(-1)^k \Gamma(k+\frac{3}{2}) \alpha \sqrt{2}}{4 \pi \beta^{k+\frac{3}{2}}} \mu^u(\sigma_l).$$
The constant in front of $\mu^u(\sigma_l)$ is now independent of $l$ so we see that 
\begin{equation}
\label{eq:Thm 4 constant}
\lim_{n \to +\infty} \frac{n^{k+\frac{3}{2}}}{(\lambda^u_1)^n} \cap_\ast \circ \phi^n(\gamma) = \frac{(-1)^k \Gamma(k+\frac{3}{2}) \alpha \sqrt{2}}{4 \pi \beta^{k+\frac{3}{2}}} \mu^u.
\end{equation}
\end{proof}

\subsection{Asymptotics of cylinder intersections}
\label{sect:intersection}

The goal of this section is to prove Theorem \ref{thm:mixing} and Corollary \ref{cor: dissipative}
of the introduction.

We first need to make a general remark about cylinders intersection on a translation surface.
Let $\sC$ be a cylinder on a translation surface $\bS$. A {\em core curve} $\gamma_\sC$ of $\sC$ is a closed geodesic in $\sC$. Such closed geodesic must wind once around the circumference of $\sC$. 

\begin{proposition}
\label{prop:intersection_formula}
Let $\sA$ and $\sB$ be cylinders on a translation surface $\bS$ with core curves $\gamma_\sA$ and $\gamma_\sB$. 
Assuming $\sA$ and $\sB$ are not parallel (i.e., $\hol \gamma_\sA \wedge \hol \gamma_\sB \neq 0$), we have
$$\textit{Area}({\mathcal A} \cap {\mathcal B})=
\frac{\big|\gamma_{\mathcal A} \cap \gamma_{\mathcal B}\big|\textit{Area}({\mathcal A}) \textit{Area}({\mathcal B})}
{\big|\hol(\gamma_{\mathcal A}) \wedge \hol(\gamma_{\mathcal B})\big|},
$$
where $\gamma_{\mathcal A} \cap \gamma_{\mathcal B}$ denotes algebraic intersection
number.
\end{proposition}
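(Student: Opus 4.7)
The plan is to compute $\Area(\sA \cap \sB)$ by Fubini in coordinates adapted to $\sA$. First, I would use the definition of a cylinder to fix an isometry of $\sA$ with $\R/c_\sA\Z \times [0, h_\sA]$, where $c_\sA = |\hol(\gamma_\sA)|$ is the circumference and $h_\sA$ is the height, so that $\Area(\sA) = c_\sA h_\sA$. Each horizontal slice $\gamma_t = \R/c_\sA\Z \times \{t\}$ is then a closed geodesic parallel to $\gamma_\sA$, freely homotopic to $\gamma_\sA$ inside $\sA$, and with the same holonomy. Fubini gives
$$
\Area(\sA \cap \sB) \;=\; \int_0^{h_\sA} \ell(t)\,dt,
$$
where $\ell(t)$ denotes the one-dimensional measure of $\gamma_t \cap \sB$.

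Next I would analyze how an individual slice $\gamma_t$ sits inside $\sB$. Because $\hol(\gamma_\sA) \wedge \hol(\gamma_\sB) \neq 0$, the two holonomy directions are transverse, and $\sB$ is a parallel strip bounded by two geodesics in the $\gamma_\sB$ direction. Each maximal subarc of $\gamma_t$ contained in $\sB$ must therefore enter through one boundary component and exit through the other, since a straight line cannot re-exit the side of a parallel strip it entered. Such a crossing has length $h_\sB / |\sin \theta|$, where $\theta$ is the angle between the two holonomy directions; writing $|\sin \theta|$ in terms of holonomies gives $|\sin \theta| = |\hol(\gamma_\sA) \wedge \hol(\gamma_\sB)|/(c_\sA c_\sB)$, so each crossing has length $h_\sB c_\sA c_\sB / |\hol(\gamma_\sA) \wedge \hol(\gamma_\sB)|$.

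The third step is counting these crossings. Each one enters $\sB$ on one boundary and exits on the other, and $\gamma_\sB$ separates $\sB$ into two halves, so each crossing meets $\gamma_\sB$ in exactly one transverse point. Hence the number of crossings equals the geometric intersection number $i(\gamma_t, \gamma_\sB)$. Because $\gamma_t$ and $\gamma_\sB$ are both straight with fixed direction vectors, every transverse intersection has the same sign, determined by the sign of $\hol(\gamma_\sA) \wedge \hol(\gamma_\sB)$, so $i(\gamma_t, \gamma_\sB) = |\gamma_t \cap \gamma_\sB|$; free homotopy of $\gamma_t$ to $\gamma_\sA$ in $\sA$ then gives $|\gamma_t \cap \gamma_\sB| = |\gamma_\sA \cap \gamma_\sB|$. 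Combining these for almost every $t \in [0, h_\sA]$ (excluding the measure-zero set of $t$ for which $\gamma_t$ meets an endpoint of a boundary component of $\sB$) yields
$$
\ell(t) \;=\; \frac{|\gamma_\sA \cap \gamma_\sB|\, h_\sB c_\sA c_\sB}{|\hol(\gamma_\sA) \wedge \hol(\gamma_\sB)|},
$$
and integrating in $t$, together with $\Area(\sA)\Area(\sB) = c_\sA h_\sA c_\sB h_\sB$, produces the claimed identity.

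The main obstacle is not analytic but combinatorial: one must confirm that the decomposition of $\sA \cap \sB$ into parallelogram components really does give exactly $|\gamma_\sA \cap \gamma_\sB|$ crossings on a generic slice, each with the same sign, so that geometric and algebraic intersection numbers of the core curves agree. Once this observation is in place, the remainder of the argument is routine integration.
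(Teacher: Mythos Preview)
Your argument is correct and complete; the final computation checks out. The route, however, differs from the paper's.

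The paper proceeds by a direct local picture: it develops one component of $\sA \cap \sB$ into the plane, labels the four side vectors $\a,\b,\bc,\bd$ (with $\a \parallel \bc$ and $\b \parallel \bd$), writes $\Area(\sA)=|\a \wedge (\b+\bd)|$, $\Area(\sB)=|(\a+\bc)\wedge \b|$ and $|\hol(\gamma_\sA)\wedge\hol(\gamma_\sB)|=|(\a+\bc)\wedge(\b+\bd)|$, and cancels to obtain $|\a\wedge\b|$ as the area of a single parallelogram. It then multiplies by the number of parallelograms, which it simply asserts equals $|\gamma_\sA \cap \gamma_\sB|$. Your approach replaces this algebraic manipulation by a Fubini slicing of $\sA$: you compute the length of each slice $\gamma_t \cap \sB$ via the crossing length $h_\sB/|\sin\theta|$ and integrate over $t\in[0,h_\sA]$.

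Each approach has its virtue. The paper's developed-parallelogram picture is more visual and pinpoints exactly which quantity equals the area of one component. Your slicing argument is slightly more systematic and, notably, you actually \emph{justify} the combinatorial step that the paper only states: by observing that every crossing of a slice through $\sB$ meets $\gamma_\sB$ once and that all such intersections carry the same sign (fixed by the sign of $\hol(\gamma_\sA)\wedge\hol(\gamma_\sB)$), you cleanly identify the number of crossings with $|\gamma_\sA \cap \gamma_\sB|$. That is the point you flagged as the ``main obstacle,'' and your treatment of it is in fact more explicit than the paper's.
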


\begin{figure}[htbp]
\begin{center}
\includegraphics[width=3in]{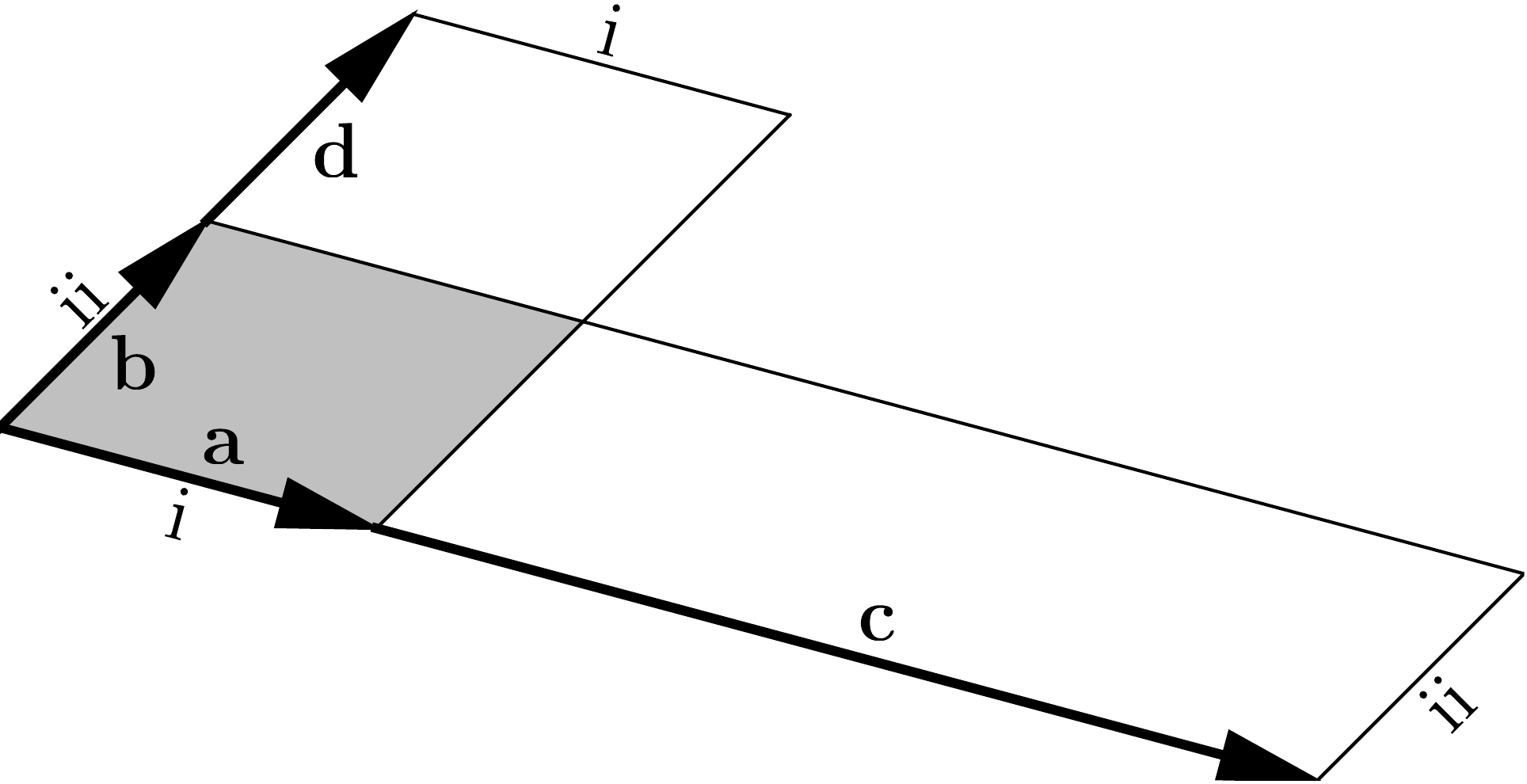}
\caption{Two intersecting cylinders developed into the plane. Roman numerals indicate edge identifications, which reconstruct the cylinders.}
\label{fig:cylinder_intersection}
\end{center}
\end{figure}

\begin{proof}
Cylinders on a translation surface intersect in a union of parallelograms, which are isometric and differ only by parallel translation. The number of these parallelograms is the absolute value of the algebraic intersection number between the core curves. Thus, we need to show that the area of one such parallelogram is given by 
\begin{equation}
\label{eq:parallelagram}
\frac{\textit{Area}({\mathcal A}) \textit{Area}({\mathcal B})}
{\big|\hol(\gamma_{\mathcal A}) \wedge \hol(\gamma_{\mathcal B})\big|}.
\end{equation}
Develop the two cylinders into the plane from an intersection as in Figure \ref{fig:cylinder_intersection}. Define the vectors ${\bf a}$, ${\bf b}$, ${\bf c}$, and ${\bf d}$ as in the figure. Then the area of the two cylinders is given by the quantities
$$\textit{Area}({\mathcal A})=|\ba \wedge (\bb+\bd)| \quad \text{ and} \quad \textit{Area}({\mathcal B})=|(\ba+\bc) \wedge \bb|.$$
Since the pair of vectors $\ba$ and $\bc$ are parallel as are the pair $\bb$ and $\bd$, we may write the product of areas as
$$\textit{Area}({\mathcal A})\textit{Area}({\mathcal B})=
|\ba \wedge \bb| |(\ba+\bc) \wedge (\bb+\bd)|.$$
The wedge of the holonomies may be written as
$$\big|\hol(\gamma_{\mathcal A}) \wedge \hol(\gamma_{\mathcal B})\big|=|(\ba+\bc) \wedge (\bb+\bd)|.$$
Thus, the quotient given in \eqref{eq:parallelagram} is $|\ba \wedge \bb|$, the area of the 
parallelogram formed by the intersection.
\end{proof}

\begin{proof}[Proof of Theorem \ref{thm:mixing}]
 Let $\sA$ and $\sB$ be cylinders on $\bP_1$ and let 
$\gamma_\sA$ and $\gamma_\sB$ be their core curves and let $\v, \bw \in \Z^2$ be their holonomies.
Let $\phi$ be a hyperbolic automorphism of $\bP_1$.
Then $D \phi=\rho_1(g)$ is a hyperbolic element of $\SL^\pm(2,\Z)$. As a consequence the eigenvalues of $D \phi$ are quadratic irrationals and so $(P_1^u \v) \wedge \bw$ is non-zero. In the context of Theorem \ref{thm:parabolic intersections} this implies that $k=0$ and $\kappa = (P_1^u \v) \wedge \bw$ and so
the theorem gives that
$$\phi^n(\gamma_\sA) \cap \gamma_\sB ~\sim~ (\lambda_1^u)^n n^{\frac{-3}{2}} \frac{\Gamma(\frac{3}{2})\kappa}{\pi \beta^\frac{3}{2} \sqrt{2}}~=~
 (\lambda_1^u)^n n^{\frac{-3}{2}} \frac{\kappa}{4 \beta^\frac{3}{2} \sqrt{2\pi}}.
$$
For sufficiently large values of $n$, the cylinders $\phi^n({\mathcal A})$ and
$\mathcal B$ are not parallel, so we may apply Proposition \ref{prop:intersection_formula} 
to obtain that
$$\textit{Area}\big(\phi^n({\mathcal A}) \cap {\mathcal B}\big)~=~
\frac{\big|\phi^n(\gamma_{\mathcal A}) \cap \gamma_{\mathcal B}\big|\textit{Area}({\mathcal A}) \textit{Area}({\mathcal B})}{|\rho_1(g)^n \v \wedge \bw|}
~\sim~ 
\frac{n^{\frac{-3}{2}} |\kappa| \textit{Area}({\mathcal A}) \textit{Area}({\mathcal B})}{4 \beta^\frac{3}{2} \sqrt{2\pi} |(\lambda_1^u)^{-n} \rho_1(g)^n \v \wedge \bw|}.
$$
The quantity $|(\lambda_1^u)^{-n} \rho_1(g)^n \v \wedge \bw|$ converges to $|\kappa|$, so this gives us the expression in the statement of the theorem.
\end{proof}

\begin{proof}[Proof of Corollary \ref{cor: dissipative}]
Fix $\phi:\bP_1 \to \bP_1$ hyperbolic.
We will explain how to produce the wandering sets $W_{i,k}$ so that $\bP_1 \smallsetminus \bigcup_{i,k} W_{i,k}$ has zero measure.
Let $\sA_i$ be the $i$-th horizontal cylinder as depicted in Figure \ref{fig:horizontal}.
By Theorem \ref{thm:mixing} applied to $\phi^{-1}$ we have
$$\sum_{n=0}^\infty \Area\big(\phi^{-n}(\sA_i) \cap \sA_i\big) < \infty.$$
This means that the set of points in $\sA_i$ which return infinitely often to $\sA_i$ has measure zero. For $k \geq 0$, let $W_{i,k}$ to be the set of points in $\sA_i$ which return exactly $k$ times to $\sA_i$ under the action of $\phi^{-1}$. From the remarks above we see that $\sA_i \smallsetminus \bigcup_{k \geq 0} W_{i,k}$ is measure zero.
Since $\bP_1=\bigcup_i \sA_i$ we see that $\sW=\{W_{i,k}\}$ satisfies the statements in Corollary \ref{cor: dissipative}.
\end{proof}

\subsection{Geometric intersection numbers}
\label{sect:geometric}
The space $\bP_1$ is non-positively curved in the sense that its universal cover is $CAT(0)$.
This guarantees that all homotopically non-trivial closed curves have geodesic representatives in the metric sense.
In particular every such curve has a realization as a sequence of saddle connections $\sigma_1, \ldots, \sigma_k$ so that the endpoint singularity of $\sigma_i$
coincides with the start singularity of $\sigma_{i+1 \pmod{k}}$ and the angle made between the two saddle connections at this singularity is at least $\pi$. A closed metric geodesic may also be a closed non-singular straight-line trajectory, but by moving to the boundary of the corresponding cylinder we can find a metric geodesic representative of this homotopy class consisting of a sequence of saddle connections.

We will now describe a way to compute the geometric intersection numbers between two non-trivial homotopy classes of closed curves in the punctured surface $\bP_1^\circ$.
First we may find metric geodesic representatives $\alpha=\alpha_1 \cup \ldots \cup \alpha_k$ and $\gamma=\gamma_1 \cup \ldots \cup \gamma_l$ for the curves in $\bP_1$ where the $\alpha_i$ and $\gamma_j$ are saddle connections. There are two types of intersections
between $\alpha$ and $\gamma$: those that occur at singularities and those that do not. The unit tangent bundle space $T_1 s_\ast$ at an infinite cone singularity $s_\ast$ is naturally homeomorphic to a line and we can make it a metric line using angle coordinates, identifying it with the universal cover of $\R/2\pi \Z$. Two saddle connections meeting at a singularity $s_\ast$ thus determine an interval $I$ in $T_1 s_\ast$. Our metric geodesics $\alpha$ and $\gamma$ thus determine two sequences of intervals
$I_1, \ldots, I_k$ and $J_1, \ldots J_l$ in the pair of lines $T_1 s_0 \cup T_1 s_1$. We say two intervals $I$ and $J$ are {\em linked} if $I$ contains an endpoint of $J$ and $J$ contains an endpoint of $I$.

\begin{proposition}
\label{prop:geometric intersection number}
Assume the metric geodesics $\alpha=\alpha_1 \cup \ldots \cup \alpha_k$ and $\gamma=\gamma_1 \cup \ldots \cup \gamma_l$ are transverse in $\bP^\circ_1$
(i.e., none of the saddle connections coincide). 
Then the geometric intersection number $i(\alpha, \gamma)$ is given by the sum of the number of intersections in $\bP^\circ_1$ and the number of linked pairs
of intervals $(I,J)$ with $I \in \{I_i\}$ and $J \in \{J_j\}$. 
\end{proposition}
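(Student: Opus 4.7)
The plan is to prove the proposition by establishing a matching upper and lower bound on $i(\alpha, \gamma)$, so that the claimed count is exactly the geometric intersection number. The upper bound comes from an explicit construction of transverse representatives in $\bP_1^\circ$, and the lower bound comes from the non-positive curvature of $\bP_1$.

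For the upper bound, I would construct explicit representatives of the free homotopy classes of $\alpha$ and $\gamma$ lying in $\bP_1^\circ$ that realize exactly the counted number of crossings. Fix a small radius $r \in (0, \sqrt{2})$ so that the balls $B_r(s_0)$ and $B_r(s_1)$ are disjoint and each meets the saddle connections only in initial or terminal segments, as in the proof of Lemma \ref{lem:intersection parabola 2}. Replace each ``corner'' of $\alpha$ at a singularity $s_\ast$ (i.e., each consecutive pair $\alpha_i, \alpha_{i+1}$ meeting at $s_\ast$) by a nearby arc that cuts across $B_r(s_\ast)$ without touching $s_\ast$, and do the same for $\gamma$. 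Outside the two balls, the representatives agree with $\alpha$ and $\gamma$, so the transverse non-singular intersections are preserved. Inside each ball, the local picture reduces to a collection of arcs in a disk, each determined by an endpoint pair coming from the interval $I$ or $J$ in $T_1 s_\ast$ associated with the corresponding corner. Because $T_1 s_\ast$ is a line (the singularity has infinite cone angle), any two such intervals are either linked or one lies entirely to one side of the other. In the linked case, the corresponding arcs in the disk must cross exactly once; in the unlinked case, a further small isotopy disjoins them. This produces transverse representatives with exactly the stated number of intersections.

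For the lower bound, I would appeal to the fact that the universal cover of $\bP_1^\circ$ is CAT(0), so metric geodesic representatives realize the minimum of intersection numbers within a free homotopy class. Lift $\alpha$ and $\gamma$ to families of complete bi-infinite metric geodesics in $\widetilde{\bP_1}$. Away from preimages of singularities, these geodesics lie in the CAT(0) space $\widetilde{\bP_1^\circ}$, and two distinct such geodesics can meet in at most one point. Counting $\pi_1$-orbits of intersecting pairs of lifts gives $i(\alpha, \gamma)$, and each such pair contributes an intersection corresponding either to a transverse crossing in $\bP_1^\circ$ or to a corner at which some deck translate of a corner of $\alpha$ and one of $\gamma$ pass through the same lifted singularity with linked tangent intervals. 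Since every perturbation of either curve in $\bP_1^\circ$ retains these crossings (a bigon between two geodesic-like curves in a CAT(0) surface would imply two distinct geodesics meeting twice, a contradiction), the count is a lower bound for $i(\alpha, \gamma)$.

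The main obstacle is the second step: the metric geodesics $\alpha$ and $\gamma$ pass through the singularities, so $\widetilde{\bP_1^\circ}$ is not quite the right ambient space and the standard ``CAT(0) geodesics meet at most once'' statement does not apply verbatim at a shared singularity. To handle this cleanly, I would work throughout with the perturbed representatives constructed in the upper bound and show that the linking condition at each shared singularity is a homotopy invariant of the perturbed curves in $\bP_1^\circ$ (independent of the choice of perturbation). Alternatively, one can argue directly in $\widetilde{\bP_1}$ using that the local geometry at each lifted singularity is a Euclidean infinite cone, where ``linked'' and ``unlinked'' pairs of corners are distinguished by whether every sufficiently small perturbation into the smooth locus produces a transverse crossing.
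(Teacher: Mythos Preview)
Your upper-bound construction is essentially the paper's: perturb the geodesics off the singularities inside small balls and observe that linked corner-intervals force exactly one crossing while unlinked ones force none. The paper makes this clean by identifying each ball $B_r(s_\ast)$ with $\H\cup\{\infty\}$ (radial rays to vertical lines), straightening each corner to a hyperbolic geodesic, and invoking the standard fact that two such geodesics cross if and only if their boundary intervals link.

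Where you diverge is in the minimality step. The paper does \emph{not} use CAT(0) lifting at all: it applies the bigon criterion \cite[Prop.~3.10]{FLP} to the perturbed curves $\hat\alpha,\hat\gamma$ and rules out any bigon by Gauss--Bonnet. A hypothetical bigon, pushed back to the flat geodesics, bounds a flat disk whose corners at the two intersection points contribute exterior angle strictly less than $\pi$ each, while every other vertex is a singular turn of $\alpha$ or $\gamma$ with interior angle $\ge\pi$, hence nonpositive exterior angle; the total exterior angle is then $<2\pi$, contradicting Gauss--Bonnet for a disk. This is short and self-contained.

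Your CAT(0) route, as written, has a genuine gap that you partly flag but do not close. If you lift the original geodesics to $\widetilde{\bP_1}$, then \emph{every} pair of corners at a common singularity---linked or not---produces a pair of lifts meeting at a lifted cone point; CAT(0) uniqueness gives you nothing to distinguish linked from unlinked there, so counting intersecting lift-pairs overcounts. If instead you lift the perturbed curves to $\widetilde{\bP_1^\circ}$, those curves are no longer geodesics, so the ``two geodesics meet at most once'' argument no longer applies and you are back to needing a bigon argument anyway. Your proposed fixes (linking as a homotopy invariant, or analyzing crossing versus touching at an infinite cone point) can be made to work, but what they ultimately establish is exactly the bigon criterion in this setting; the paper's Gauss--Bonnet computation is the direct way to do it.
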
 
\begin{proof}
We will show that we can find representatives of the classes $\alpha$ and $\gamma$ which realize the claimed intersection number. Then we will argue that the representatives realize the intersection number.

There is an $r>0$ so that the open balls $B_r(s_0)$ and $B_r(s_1)$ do not intersect and only intersect the saddle connections in the set $\{\alpha_i\} \cup \{\gamma_j\}$
in segments of length $r$ where the saddle connections enter and exit the singularities. Let $\H$ be the hyperbolic upper half plane. Because the singularities are infinite cone singularities, the balls are homeomorphic to $\H \cup \{\infty\}$ via a
sending radial lines in the ball to vertical lines in $\H$ and sending the singularity to $\infty$. Consecutive arcs of saddle connections of $\alpha$ and $\gamma$ intersecting the balls $B_r(s_\ast)$ are then sent to a vertical geodesic headed to up infinity followed by a vertical geodesic back down to the real line in $\partial \H$. We can straighten such an arcs to a geodesic in $\H$ joining the places where the two arcs pass through the real axis. Do this for all the visits of the saddle connections to the singularities. Observe that two geodesics with distinct endpoints in $\H$ joining points in $\R$ intersect if and only if the corresponding intervals in $\R$ link. Transversality guarantees this distinct endpoint condition for the geodesics constructed as above. Thus performing this action results in a pair of curves $\hat \alpha$ and $\hat \gamma$ that intersect in precisely the number of times stated in the theorem. 

We must argue that the number of intersections between $\hat \alpha$ and $\hat \gamma$ is minimal among curves in their homotopy classes. For this it suffices to show
that we can not find a simple closed curve formed from an arc of $\hat \alpha$ and an arc of $\hat \gamma$ which is homotopically trivial; see 
\cite[Proposition 3.10]{FLP}. Such arcs must join together at a pair of intersections for $\hat \alpha$ and $\hat \gamma$.
Existence of such a curve is ruled out by the Gauss-Bonnet theorem which promises that for a closed polygon in the plane, the total exterior angle is $2\pi$. Indeed suppose we had two such arcs of $\hat \alpha$ and $\hat \gamma$ which formed a simple closed homotopically trivial loop $\hat \eta$. Then $\hat \eta$ bounds a topological disk.
By deforming back to the flat geodesics $\alpha$ and $\gamma$ 
we get a curve $\eta$ that bounds a flat polygonal disk. By transversality, the interior angles at the intersections are positive, so the exterior angles at these intersection points are each strictly less than $\pi$. The other vertices of $\eta$ must come from visits of $\alpha$ or $\gamma$ to the singularities, and since they are metric geodesics the interior angles are at least $\pi$ and so the exterior angles are negative. We conclude that the total exterior angle of $\eta$ is less than $2 \pi$, which contradicts the existence of $\eta$.
\end{proof}

Lemma \ref{lem:intersection parabola 2} tells us how to estimate (within $1$) the number of interior intersections of saddle connections. Combining this lemma with the above result yields:
\begin{proposition}
\label{prop:geometric intersections}
Let $\alpha=\alpha_1 \cup \ldots \cup \alpha_k$ and $\gamma=\gamma_1 \cup \ldots \cup \gamma_l$ be transverse closed metric geodesics in $\bP_1$. Then the geometric intersection number
$i(\alpha,\gamma)$ is within $2 k l$ of
$$\sum_{i=1}^k \sum_{j=1}^l |\alpha_i \cap \gamma_j| = \sum_{i=1}^k \sum_{j=1}^l \left| \frac{1}{2\pi} \int_0^\pi \big((\widetilde \hol~\alpha_i) \wedge (\widetilde \hol~\gamma_j) \big) (1-\cos t)~dt \right|.$$
\end{proposition}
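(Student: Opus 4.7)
The plan is to combine the two named intermediate results that the proposition invokes, namely Proposition~\ref{prop:geometric intersection number} (which equates $i(\alpha,\gamma)$ with the sum of the number of interior transverse crossings and the number of linked-interval pairs at the singularities) and Lemma~\ref{lem:intersection parabola 2} (which says that $|\alpha_i \cap \gamma_j|$ and the interior unsigned crossing count $i(\alpha_i,\gamma_j)$ differ by at most one). The integral expression on the right hand side is just the formula for $|\alpha_i \cap \gamma_j|$ supplied by Lemma~\ref{lem:intersection parabola} (in its extended form~\eqref{eq:extended intersection number}), so essentially nothing new needs to be computed; the work is just a careful bookkeeping of error terms.

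First I would apply Proposition~\ref{prop:geometric intersection number} to write
$$i(\alpha,\gamma) = N_{\mathrm{int}} + N_{\mathrm{link}},$$
where $N_{\mathrm{int}}$ is the number of transverse intersection points of $\alpha$ and $\gamma$ lying off the singular set, and $N_{\mathrm{link}}$ is the number of linked pairs $(I_i, J_j)$ of the intervals in $T_1 s_0 \cup T_1 s_1$ determined by the consecutive saddle connections of $\alpha$ and $\gamma$. Transversality of the two closed geodesics in $\bP^\circ_1$ means $N_{\mathrm{int}} = \sum_{i,j} i(\alpha_i,\gamma_j)$, where each $i(\alpha_i,\gamma_j)$ counts transverse interior intersections of the two saddle connections $\alpha_i$ and $\gamma_j$.

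Next I would invoke Lemma~\ref{lem:intersection parabola 2} once for each of the $kl$ pairs $(\alpha_i,\gamma_j)$, obtaining
$$\Big| N_{\mathrm{int}} - \sum_{i=1}^k \sum_{j=1}^l |\alpha_i \cap \gamma_j| \Big| \leq \sum_{i,j} \big| i(\alpha_i,\gamma_j) - |\alpha_i \cap \gamma_j| \big| \leq kl.$$
For the singular contribution, note that the chain $\alpha$ determines exactly $k$ intervals $I_1,\ldots,I_k$ (one at each vertex between consecutive saddle connections of the closed sequence), and similarly $\gamma$ determines $l$ intervals $J_1,\ldots,J_l$. Hence the number of linked pairs satisfies the trivial bound $N_{\mathrm{link}} \leq kl$. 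Combining the two estimates with the triangle inequality,
$$\Big| i(\alpha,\gamma) - \sum_{i,j} |\alpha_i \cap \gamma_j| \Big| \leq \Big|N_{\mathrm{int}} - \sum_{i,j}|\alpha_i \cap \gamma_j|\Big| + N_{\mathrm{link}} \leq kl + kl = 2kl,$$
and rewriting each $|\alpha_i \cap \gamma_j|$ via the integral formula of Lemma~\ref{lem:intersection parabola} extended to $H_1(\bP_1,\Sigma;\R)$ by \eqref{eq:extended intersection number} yields the stated inequality.

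There is no real obstacle once the two supporting results are in hand; the only subtle point to be careful about is the counting of intervals at the singularities. One should check that the interval $I_i$ is associated to the junction between $\alpha_i$ and $\alpha_{i+1 \pmod k}$, so that $\alpha$ contributes exactly $k$ intervals (and not $2k$ despite each saddle connection having two endpoints), and similarly $l$ for $\gamma$; this makes the bound $N_{\mathrm{link}} \leq kl$ sharp in the sense used here. Transversality is what guarantees both that interior intersections are counted without collision with singular vertices and that Lemma~\ref{lem:intersection parabola 2} applies pairwise.
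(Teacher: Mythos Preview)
Your proposal is correct and follows essentially the same approach as the paper: split $i(\alpha,\gamma)$ into interior crossings plus linked-interval pairs via Proposition~\ref{prop:geometric intersection number}, bound the linked pairs by $kl$, and use Lemma~\ref{lem:intersection parabola 2} on each of the $kl$ saddle-connection pairs to bound the discrepancy between interior crossings and the integral sum by another $kl$. Your write-up is in fact more detailed than the paper's, which dispatches the argument in a single sentence.
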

Note that the identity in the equation above holds by definition see \eqref{eq:extended intersection number}.
\begin{proof}
We get an error of up to $kl$ from possible linked intervals in Proposition \ref{prop:geometric intersection number} and another error of up to one from comparing
each integral to the number of interior intersections of the corresponding saddle connections (see Lemma \ref{lem:intersection parabola 2}).
\end{proof}

Now consider an affine automorphism $\phi:\bP_1 \to \bP_1$. Let $\alpha=\alpha_1 \cup \ldots \cup \alpha_k$ be a closed metric geodesic on $\bP_1$. Observe that $\phi(\alpha)=\phi(\alpha_1) \cup \ldots \cup \phi(\alpha_k)$ is also a closed metric geodesic because this image satisfies the angle condition on consecutive saddle connections. As a consequence we see that when considering the geometric intersection number $i\big(\phi^n(\alpha),\gamma\big)$ we can use the integral formula above and the only effect is that we introduce a uniformly bounded error.

Fix $\phi:\bP_1 \to \bP_1$ with hyperbolic derivative $D \phi$. Let $\bu^u$ and $\bu^s$ be unstable and stable unit eigenvectors for $D\phi$. Then the stable and unstable 
elements $\mu^u, \mu^s \in \R^{\sS(\bP_1^\circ)}$ corresponding to the transverse measures on $\bP_1$ to foliations parallel to the expanding and contracting 
directions are defined by evaluating them on a simple closed curve with metric geodesic representative $\alpha=\alpha_1 \cup \ldots \cup \alpha_k$ by 
\begin{equation}
\label{eq:stable and unstable}
\mu^u(\alpha) = \sum_{i=1}^k |\bu^u \wedge \hol_1 \alpha_i|
\quad \text{and}\quad 
\mu^s(\alpha) = \sum_{i=1}^k |\bu^s \wedge \hol_1 \alpha_i|.
\end{equation}

\begin{proof}[Proof of Theorem \ref{thm:geometric}]
Statement (1) is standard: In the unstable direction we have
$$
\begin{array}{rcl}
\mu^u \circ \phi^{-1}(\alpha) & = & \sum_{i=1}^k |\bu^u \wedge D(\phi)^{-1}(\hol_1 \alpha_i)|=
\sum_{i=1}^k | D(\phi)(\bu^u) \wedge \hol_1 \alpha_i|
\\ &= &\sum_{i=1}^k |\lambda^u_1 \bu^u \wedge \hol_1 \alpha_i|=|\lambda^u_1| \mu^u(\alpha).
\end{array}$$
A similar argument works in the stable direction.

Now fix a homotopically non-trivial simple closed curve and let $\alpha=\alpha_1 \cup \ldots \cup \alpha_k$ be a metric geodesic representative.
We will prove that $\frac{n^\frac{3}{2}}{|\lambda_1^u|^n} i_\ast \circ \phi^n(\alpha)$ converges to a constant times $\mu^s$ with the constant as given in the theorem.
To prove this let $\gamma$ be another homotopically non-trivial simple closed curve and let $\gamma=\gamma_1 \cup \ldots \cup \gamma_k$ be a metric geodesic representative.
Fixing an $i$ and a $j$, and observe that $\hol_1 \alpha_i$ and $\hol_1 \gamma_j$ are non-zero vectors in $\Z^2$ so that $(P^1_u \hol_1\alpha_i) \wedge \hol_1 \gamma_k \neq 0$.
This tells us that $k=0$ in Theorem \ref{thm:parabolic intersections} and 
$$|\kappa|=|(P^1_u \hol_1\alpha_i) \wedge \hol_1 \gamma_k|=
\left|\Big(\frac{\bu^s \wedge \hol_1\alpha_i}{\bu^s \wedge \bu^u} \bu^u\Big) \wedge \hol_1 \gamma_k\right|=
\frac{\mu^s(\alpha_i) \mu^u(\gamma_j)}{|\bu^u \wedge \bu^s|},$$
where we are slightly abusing notation by applying \eqref{eq:stable and unstable} to saddle connections (but this is justified if we think of these functions as determining
measured foliations.) Then from Theorem \ref{thm:parabolic intersections} we get
$$\lim_{n \to \infty} \frac{n^\frac{3}{2}}{|\lambda_1^u|^n} |\phi^{-n}(\alpha_i) \cap \gamma_j|=
\frac{\mu^s(\alpha_i) \mu^u(\gamma_j)}{4 \beta^{\frac{3}{2}} \sqrt{2\pi}\,|\bu^u \wedge \bu^s|}.$$
Then it follows from Proposition \ref{prop:geometric intersections} that
$$\lim_{n \to \infty} \frac{n^\frac{3}{2}}{|\lambda_1^u|^n} i\big(\phi^{n}(\alpha),\gamma\big)=\sum_{i=1}^k \sum_{j=1}^l \frac{\mu^s(\alpha_i) \mu^u(\gamma_j)}{4 \beta^{\frac{3}{2}} \sqrt{2\pi}\,|\bu^u \wedge \bu^s|}
=\frac{\mu^s(\alpha) \mu^u(\gamma)}{4 \beta^{\frac{3}{2}} \sqrt{2\pi}\,|\bu^u \wedge \bu^s|}.$$
This is equivalent to the first limiting statement in statement (2) of the theorem. The second limit can be obtained by switching $\phi$ for $\phi^{-1}$ and stable for unstable.
\end{proof}

\section*{Software used}
Software was used in several ways in this paper. SageMath \cite{sagemath18} and the FlatSurf SageMath Module \cite{flatsurf} were used to experimentally check the results in this paper. FlatSurf was also used to generate the figures of translation surfaces.

\section*{Acknowledgments}
Theorem \ref{thm:mixing} was first proved while the author was a postdoc at Northwestern and the author would like to thank John Franks and Amie Wilkinson for helpful conversations at the time.
The author would also like to thank Barak Weiss for some more recent conversations,
and the anonymous referee for suggesting a number of improvements.
This article
is based upon work supported by the National Science Foundation under
Grant Number DMS-1500965 as well as a PSC-CUNY Award (funded by The Professional Staff
Congress and The City University of New York).

\bibliographystyle{amsalpha}
\bibliography{/home/pat/ownCloud/math/my_papers/bibliography}
\end{document}